\theoremstyle{plain}
\newtheorem{theorem}{Theorem}[section]
\newtheorem{definition}[theorem]{Definition}
\newtheorem{assumption}[theorem]{Assumption}
\newtheorem{lemma}[theorem]{Lemma}
\newtheorem{corollary}[theorem]{Corollary}
\newtheorem{proposition}[theorem]{Proposition}
\theoremstyle{remark}
\newtheorem{remark}[theorem]{Remark}
\newtheorem{example}[theorem]{Example}
\numberwithin{equation}{section}
\def\R{{\mathbb R}}
\def\N{{\mathbb N}}
 \def\dd{\mathrm d}
\newcommand{\loc}{\rm loc}
\DeclareMathOperator{\diver}{div}
\DeclareMathOperator{\curl}{curl}
\newcommand{\dist}{\operatorname{dist}}
\newcommand{\supp}{\operatorname{supp}}
\renewcommand{\Cap}{\operatorname{Cap}}
\renewcommand{\leq}{\leqslant}\renewcommand{\le}{\leqslant}
\renewcommand{\geq}{\geqslant}\renewcommand{\ge}{\geqslant}
\def\cal#1{\mathcal{#1}}
\def\mb#1{\boldsymbol{#1}}
\def\eps{\varepsilon}
\newcommand{\beps}{b_{\eps}}
\newcommand{\veps}{v_{\eps}}
\newcommand{\weps}{\omega_{\eps}}
\newcommand{\Weps}{\Omega_{\eps}}
\newcommand{\Tweps}{\widetilde{\Omega}_{\eps}}
\newcommand{\phie}{\phi_{\eps}^1}
\newcommand{\alphae}{\alpha_{\eps}}
\newcommand{\Ieps}{\mathcal{I}_{\eps}}
\newcommand{\pun}{\psi_{\eps}^1}
\newcommand{\tphie}{\tilde{\phi}_\eps^1}
\newcommand{\Oo}{\Omega\setminus\{0\}}
\newcommand{\chid}{\chi_\delta}
\newcommand{\vreg}{v_{\text{reg}}}
\newcommand{\vsing}{v_{\text{sing}}}
\date\today
\title{Lake equations with an evanescent or emergent island}
\author[L.E. Hientzsch]{Lars Eric Hientzsch}
\author[C. Lacave]{Christophe Lacave}
\author[E. Miot]{Evelyne Miot}
\email{\newline larseric.hientzsch@univ-grenoble-alpes.fr \newline christophe.lacave@univ-grenoble-alpes.fr \newline evelyne.miot@univ-grenoble-alpes.fr}
\address{Univ. Grenoble Alpes, CNRS, Institut Fourier, F-38000 Grenoble, France.}
\begin{document}
\begin{abstract} We study the asymptotic dynamics of the lake equations in the following two cases, an island shrinking to a point and an emerging island. For both cases, we derive an asymptotic lake-type equation. In the former case, the asymptotic dynamics includes an additional Dirac mass in the vorticity. The main mathematical difficulty is that the equations are singular when the water depth vanishes. We provide new uniform estimates in weighted spaces for the related stream functions which will imply the compactness result. 
\end{abstract}

\maketitle


\section{Introduction}
The purpose of this paper is to derive the asymptotic lake equations for the singular limit of an evanescent or emergent island. The lake equations arise as a $2D$ model for the vertically averaged horizontal component of the velocity of a $3D$ incompressible fluid \cite{Gr}. Given a domain $\Omega\subset \R^2$ and a topography (depth function) $b:\overline{\Omega}\rightarrow \R_+$, the pair $(\Omega,b)$ is called a lake and the lake equations read
\begin{equation}\label{eq:lake-vel}
 \begin{cases}
 \partial_t(bv)+\diver(bv\otimes v)+b\nabla p=0,\\
 \diver(bv)=0, \quad (bv)\cdot \mb n=0,
 \end{cases}
\end{equation}
where $v:\Omega\rightarrow \R^2$ is the velocity field, $p$ the pressure and $\mb n$ the outward-pointing unit normal vector on $\partial\Omega$. System \eqref{eq:lake-vel} can also be obtained from the shallow water wave equations in the low Froude number limit \cite{BM10}. For flat topographies, i.e. $b$ is a constant function, the lake equations \eqref{eq:lake-vel} simply become the incompressible two-dimensional Euler equations, for which the well-posedness has extensively been studied, see Wolibner \cite{Wolibner} or Yudovich \cite{yudo} for initial vorticity in $L^{\infty}(\Omega)$. When the depth function $b$ varies but is bounded away from zero, this analysis may be adapted to establish well-posedness of the lake equations \cite{LOTiti2}. More difficult and much more realistic, Bresch and M\'etivier \cite{BM} obtained the well-posedness on a simply-connected and smooth domain $\Omega$ for vorticity in $L^{\infty}(\Omega)$ with varying depth $b$ possibly vanishing on the boundary of $\Omega$. This required the use of elliptic techniques for degenerated equations. Therefore, we refer to \eqref{eq:lake-vel} on a lake $(\Omega,b)$ with vanishing topography as degenerated lake equations.
Subsequently, the second author with Nguyen and Pausader proved in \cite{LNP} that the lake equations are structurally stable under Hausdorff approximations of the fluid domain $\Omega$ and $L^p$ perturbations of the depth $b$. As a byproduct, the authors obtained the existence of a global weak solution in a large class of irregular lakes including not simply-connected domains, namely lakes with islands, as is recalled in Theorem~\ref{thm:existencenonsmooth} below. Existence of renormalized solutions for the degenerated lake equations has recently been introduced in \cite{BJ}.

In this paper, we are concerned firstly with the problem of an evanescent island, namely we aim to describe the limit flow for an island that coalesces to a point. Our study is motivated by scenarios such as flooding, sedimentation or erosion of an island due to e.g. extreme meteorologic events. The second issue we shall address is the adjoint problem where an island appears from the change of depth induced by geological events such as the birth of a volcano close to Mayotte \cite{Mayotte, Mayotte2}.
This example leads to a sequence of varying topographies, but our analysis also includes the scenario where the level of the water is uniformly decreasing. Let us mention that such asymptotic analysis are not included in \cite{LNP} where every asymptotic island is connected with at least two points, with the depth functions only vanishing on the boundary.
Here, we consider an asymptotic topography for \eqref{eq:lake-vel} such that $\Omega$ is a punctured lake, namely simply-connected and there exists a unique point $P\in\Omega$ with $b(P)=0$ (see Definition~\ref{ass:b}).

\subsection{Definition and sequences of lakes}

\subsubsection*{Some definitions of lakes}
We give in this section precise definitions of the lakes that will be under consideration in the main results. An island is said to be degenerated if it is reduced to a single point.
We start with the definition of a lake without degenerated island but with possibly one non-degenerated island. 
\begin{definition}[Lake without degenerated islands, with at most one non-degenerated island] \label{def-generallake}The lake $(\Omega,b)$ is a lake without degenerated islands and with one non-degenerated island if
\begin{enumerate}
\item $ \displaystyle \Omega := \widetilde{\Omega} \setminus \cal{I}$,
where $\widetilde \Omega$ is an open bounded simply connected subset of $\R^2$ and $\cal{I}$ is a compact, simply connected subset of $\widetilde \Omega$ containing at least two points;
\item $b\in L^\infty(\Omega, \R^+)$ and for any compact set $K\subset \Omega$, there exists a positive number $\theta_K$ such that $ b(x) \ge \theta_K>0$ on $K$;
\item there are small neighborhoods $\mathcal{O}^0$ and $\mathcal{O}^1$ of $\partial\widetilde{\Omega}$ and $\partial\cal{I}$ respectively, such that, for $k\in \{0,1\}$,
\begin{equation*}
b(x)=c(x)\left[d(x)\right]^{a_k} \qquad \text{ in } \mathcal{O}^k\cap \Omega,
\end{equation*}
where $c(x), d(x)$ are bounded functions in the neighborhood of the boundary, $c(x)\ge \theta>0$, $a_k \ge 0 $. Here the geometric function $d(x)$ is $C^1$ and satisfies $\Omega=\{d>0\}$ and $\nabla d\ne0$ on $\partial\Omega$;
\item if $b$ vanishes on the boundaries $\partial\widetilde \Omega$ and $\partial\cal{I}$, i.e. $a_k>0$, then the respective boundary is a $C^1$ Jordan curve.
\end{enumerate}

The lake $(\Omega,b)$ is a lake without degenerated islands and without non-degenerated island if
 $ \displaystyle \Omega = \widetilde{\Omega}$ is an open bounded simply connected subset of $\R^2$ and $b$ satisfies the same assumptions as above on $\Omega$.
\end{definition}

The second assumption states that the depth function may not vanish in the interior of the lake, and the third one states that the shore is either of non-vanishing ($a_k=0$) or vanishing topography with constant slopes $a_k>0$. We notice that up to a change of $c$, $\theta$, we may take $d(x)=\dist(x,\partial\Omega)$.

The first goal of this paper is to treat the asymptotic regime where a non-degenerated island $\mathcal{I}_{\eps}$ shrinks to a point $P\in \Omega$ as $\eps$ tends to zero. More precisely, the asymptotic lake has no non-degenerated island but $b(P)=0$ meaning that {\em (2)} is no longer satisfied. We specify now the assumptions on such lakes, referred to as \emph{punctured lakes}. Without loss of generality we assume $P$ to be $0$. Let $\Omega$ be a Jordan domain containing the origin, and let $b$ be a non-negative function on $\overline{\Omega}$ such that 
\[
b^{-1}(\{0\})= \partial \Omega \cup \{0\} \text{ or }b^{-1}(\{0\})= \{0\}.
\]
\begin{definition}[Punctured lakes]\label{ass:b} The lake $(\Omega,b)$ is a punctured lake (in $0$) if
\begin{enumerate}
\item $ \displaystyle \Omega := \widetilde{\Omega}$ is an open bounded simply connected subset of $\R^2$;
\item $b\in L^\infty(\Omega, \R^+)$ and for any compact set $K\subset \Omega\setminus\{0\}$, there exists a positive number $\theta_K$ such that $ b(x) \ge \theta_K>0$ on $K$;
\item there are small neighborhoods $\mathcal{O}^0$ and $\mathcal{O}^1$ of $\partial\widetilde{\Omega}$ and $\{0\}$ respectively, such that, for $k\in \{0,1\}$;
\begin{equation*}
b(x)=c(x)\left[d(x)\right]^{a_k} \qquad \text{ in } \mathcal{O}^k\cap \Omega,
\end{equation*}
where $c(x)$ is a bounded function such that $c(x)\ge \theta>0$ for all $x\in \Omega$, $a_0\geq 0$ and $a_1\in(0,2)$. As for the lakes without degenerated island, we set $d(x)=\dist(x,\partial\Omega\cup \{0\})$;
\item if $b$ vanishes on $\partial\Omega$, then the boundary $\partial \Omega$ is a $C^1$ Jordan curve.
\end{enumerate}
\end{definition}
As already mentioned, the analysis of \cite{LNP} does not include the case of punctured lakes. Indeed, it is required in \cite{LNP} that the $H^1$ capacities of the islands are positive (which is equivalent to assuming that the connected compact subset $\cal I$ is not reduced to points \cite[Prop. 2.2]{GV-Lacave2}).

We point out that assuming a non-vanishing topography on the island, namely assuming $a_{1}=0$, is not very natural for the applications we have in mind and would rather be an extension of the analysis developed in \cite{ILL1} for the 2D Euler equations. The degeneracy $b(0)=0$ in the limit topography is essential for our method, see the uniform estimates in Section~\ref{sec:BS} only available for $a_1>0$.

As showcase topography for a punctured lake is $b(x)=|x|^{a}$ in the neighborhood of $0$. For $a\in (0,1)$, the depth function $|x|^{a}$ defines a steep beach. In particular, $\nabla|x|^{a}$ is unbounded for $x\rightarrow 0$. For $a>1$, we notice that the depth corresponds to a flat beach; $\nabla|x|^{a}$ is bounded and tends to $0$ as $x\rightarrow 0$. We shall assume that $a\in (0,2)$ that will ensure that the velocity field $v$ obtained in the limit is at least an $L_{\loc}^1$-function, which will be crucial to identify $\curl v$, see Section~\ref{sec:evanescent}. Beyond the radial profiles $|x|^{a}$, more general and realistic depth functions are included in our assumptions.

\begin{remark}\label{rem:b}
We summarize here some properties of the depth functions $b$ that will be useful in the sequel.
We notice that (\emph{3}) in Definition~\ref{ass:b} ensures ${\sqrt{b}^{-1}}\in L_{\loc}^q(\Omega)$ for all $q<\frac{4}{a_{1}}$. In particular, $\sqrt{b}^{-1}\in L_{\loc}^{2+}(\Omega)$, which will be important to identify the PDE verified in $\Omega$ (see our first main result: Theorem~\ref{thm:maingeneral}). Because of the outer boundary, we can state that there is some $q\geq 2$ such that $\sqrt{b}^{-1}\in L^q(\Omega)$ only if $a_0<1$. However, local integrability is sufficient for our purpose and we allow any $a_{0}\geq 0$. 
\end{remark}

\subsubsection*{The evanescent island}
We introduce now the small island problem. Let $(\Omega_{\varepsilon},b_{\varepsilon})$ be a lake with one non-degenerated island $\cal I_{\varepsilon}$, which shrinks to one point. The main application that we have in mind is the flooding of an island, where the level of water is increasing:
\begin{equation*}
b_{\varepsilon}:= b-\varepsilon, \quad \Omega_{\varepsilon}:= \{x\in \overline{\Omega}, \ b_{\varepsilon}>0\}. 
\end{equation*}
Natural conditions on $b$ would imply that $(\Omega_{\varepsilon},b_{\varepsilon})$ is a lake with one island (in the sense of Definition~\ref{def-generallake} with $a_{0,\varepsilon}=a_{1,\varepsilon}=1$):
\[
\Omega_{\varepsilon} = \widetilde{\Omega}_{\varepsilon}\setminus\cal I_{\varepsilon}, \qquad \Ieps=\{b\leq \eps\}\cap B(0,1).
\]
For this example, we observe that $\widetilde{\Omega}_{\varepsilon}$ describes an increasing sequence converging to $\Omega$ in the Hausdorff sense. 

\begin{remark}\label{rem:gamma}
We refer to \cite[App. B]{GV-Lacave1} for a short introduction on the Hausdorff topology. Throughout this paper, we will not use the general deep properties of the Hausdorff convergence. We will only use that for any compact set $K\subset \Omega$, there exists $\varepsilon_{K} > 0$ such that $K \subset \Omega_{\varepsilon}$ for all $\varepsilon \in (0, \varepsilon_K)$. For the outer boundary $\partial\widetilde{\Omega}_{\varepsilon}$, we will also use in Section~\ref{sec:emergent} that Hausdorff convergence implies $\gamma$-convergence and the related Mosco's convergence (see \cite[App. C]{GV-Lacave1} or \cite[App. B]{LNP} for a brief overview of these notions). 
\end{remark}

\begin{remark}\label{rem:time}
In the present paper, the changes of the domain $\Omega$ and topography $b$ leading to the evanescent island are modeled by the dependence of the lake on the parameter $\eps$ rather than a time-dependence. This choice is motivated by the fact that geometry and topography changes of the lake occur at much larger time scales compared to the relevant time-scales for \eqref{eq:lake-vel}. Allowing for a time dependent domain and topography remains a complicated and interesting open problem that we leave for future work. Our main results provide a first step towards such a mathematical analysis. 
\end{remark}

Another situation that we want to include in our analysis is when the island disappears by erosion, where the topography is deformed by wind, water, or other natural agents (with possible constant volume of water).

To encompass these two cases, we only assume the following.
\begin{assumption}\label{ass:evanescent} Let $(\Omega_{\varepsilon},b_{\varepsilon})$ be a sequence of lakes with one non-degenerated island (as in Definition~\ref{def-generallake}) and $(\Omega,b)$ a punctured lake (as in Definition~\ref{ass:b}), such that 
\begin{enumerate}
\item ${\Omega}_{\varepsilon}\to \Omega\backslash\{0\}$ in the Hausdorff sense and $\Weps\subset \Omega$ for all $\eps$;
\item there exists $C>0$ such that $\beps(x)\leq C b(x)$ for all $x\in \Omega$ upon extending $\beps$ by $0$ to $\Omega$;
\item $\beps \rightarrow b$ in $L_{\loc}^1(\Oo)$ and for any $K\Subset \Oo$, there exists $\theta_{K}>0$ and $\varepsilon_{K}>0$ such that $\beps(x)\geq \theta_{K}$ for all $(x,\varepsilon)\in K\times(0,\varepsilon_{K}]$.
\end{enumerate}
\end{assumption}

Here, we require a quite general information concerning the way how the bottom tends to a degenerated bottom, provided by (\emph{1})-(\emph{2}) Assumption~\ref{ass:evanescent}.
These assumptions are suitable for the applications given above. Actually, we will discuss in Section~\ref{sec:BS} even more general assumptions, but more technical to state.

As already mentioned, if $\beps$ is a constant function (in $x$), \eqref{eq:lake-vel} reduces to the incompressible $2D$-Euler equations, see \cite{ILL1,L07} for the corresponding small obstacle problem.
Let us note that in these two papers, the authors required specific information on the shrinking process: the obstacle shrinks homothetically $\cal I_{\varepsilon}=\varepsilon \cal I_{1}$.

\subsubsection*{The emergent island}

Another natural question is to study the situation where the level of water is decreasing, starting from a case without island namely such that $b_\eps>0$ on $\Omega_{\varepsilon}$, $b_{\varepsilon}(0)=\varepsilon>0$ but at the limit $b(0)=0$. 

Therefore, we still consider an asymptotic punctured lake $(\Omega,b)$ verifying Definition~\ref{ass:b} and the typical situation we want to analyze is $b_{\varepsilon}:= b+\varepsilon$, $\varepsilon>0$.
From a modelling point of view, this scenario is mostly meaningful if $a_{0}\leq 1$. Indeed, for $a_0\in(1,2)$ the beach and the surface line of the water form a cusp. Thus, there seems to be no reasonable choice of $(\Weps,\beps)$ leading to the desired limit topography. For steep beaches, namely $a_0\in(0,1)$, it is natural to consider $\Weps=\Omega$ and to notice that $(\Omega_{\varepsilon},b_{\varepsilon})$ is a lake without island with non-vanishing topography : $a_{0,\varepsilon}=0$. For $a_0=1$, one may consider a sequence of lakes $(\Omega_{\varepsilon},b_{\varepsilon})$ without island and with $a_{0,\varepsilon}=1$.

Another situation that we want to include in our analysis is when the bottom is deformed (with possible constant level of water). One may think of an island born from a submarine volcano \cite{Mayotte,Mayotte2}. A showcase topography compatible with this configuration is 
 \begin{equation*}
 \beps(x)= (|x|^2+\eps )^{\frac{a_{1}}2}\eta(x) +b(x) (1-\eta(x))
\end{equation*}
where $\eta$ a cutoff function such that $\eta\equiv 1$ on $B(0,\varepsilon_{0})$ and $\eta\equiv 0$ on $\Omega\setminus B(0,2\varepsilon_{0})$. As for the evanescent island, we assume the sequence of lakes $(\Weps,\beps)$ to be $\eps$-dependent but independent of time, see Remark~\ref{rem:time}.
The former examples are included in the following general assumptions.
\begin{assumption}\label{ass:emergent} 
 Let $(\Omega_{\varepsilon},b_{\varepsilon})$ be a sequence of lakes without island (as in Definition~\ref{def-generallake}) and $(\Omega,b)$ a punctured lake (as in Definition~\ref{ass:b}), such that 
\begin{enumerate}
\item ${\Omega}_{\varepsilon}\to \Omega$ in the Hausdorff sense;
\item $\beps\in L^{\infty}(\Weps)$ uniformly bounded and $ \sqrt{\beps}^{-1}\rightarrow\sqrt{b}^{-1}$ in $L_{\loc}^q(\Omega)$ for some $q>2$.
\end{enumerate}
\end{assumption}

Even if Assumption~\ref{ass:emergent} seems weaker than Assumption~\ref{ass:evanescent}, we should be aware that (\emph{2}) encodes the emergence process.

\subsection{Main results}

As for the 2D-Euler equations, the notion of vorticity plays a prevalent role. Here, we introduce the potential vorticity $\omega$ as
\begin{equation*}
 \omega:=\frac{1}{b}\curl(v)=\frac{\partial_{1} v_{2}-\partial_{2} v_{1}}{b}
\end{equation*}
which satisfies the continuity equation together with the incompressibility condition,
\begin{equation}\label{eq:continuity}
 \partial_t(b\omega)+\diver(b v \omega)=0, \qquad \diver(bv)=0,
\end{equation}
which formally amounts to the transport equation
\begin{equation}\label{eq:transport}
 b\left(\partial_t\omega+v\cdot\nabla\omega\right)=0, \qquad \diver(bv)=0.
\end{equation}
 Because $b$ may vanish, equation \eqref{eq:transport} is a nonlinear advection equation with a degenerated anelastic constraint. We refer to the paper \cite{BJ} where stability estimates are derived for a class of such equations. In \cite{LNP}, it is crucial that $b$ only vanishes on the boundary, which implies that the div-curl problem is uniformly elliptic in the interior of the domain.

We notice that in the particular case $\Omega=(0,\infty)\times \R$ and $b(r,z)=r$, system \eqref{eq:transport} formally corresponds to the axis-symmetric Euler equations, where the quantity $\frac{\curl v}r$ also plays an important role, see \cite{GS}.

\medskip

Borrowed from \cite{LNP}, we give now the notion of weak solution to the vorticity formulation \eqref{eq:continuity} of \eqref{eq:lake-vel} that will be used for a given sequence of lakes and for asymptotic lakes.
\begin{definition}[Vorticity formulation]\label{defi:vorticity}
 Let $(\Omega, b)$ be a lake in the sense of Definitions~\ref{def-generallake} or \ref{ass:b}. Let $(v^0,\omega^0)$ be a pair such that
 \begin{equation*}
 \diver(b v^0)=0 \text{ in } \Omega \text{ and } b v^0\cdot \mb n=0 \text{ on } \partial\Omega \text{ in weak sense (see \eqref{imperm})}
 \end{equation*} 
 and
 \begin{equation*}
 \omega^0\in L^{\infty}(\Omega), \quad \curl v^0=b \omega^0 \quad \text{in the sense of distributions on $\Omega$}.
 \end{equation*}
The pair $(v,\omega)$ is a global weak solution of the vorticity formulation of \eqref{eq:lake-vel} on the lake $(\Omega,b)$ with initial condition $(v^0,\omega^0)$ if
 \begin{enumerate}[(i)]
 \item $\omega\in L^{\infty}(\R_+\times \Omega)$, $\sqrt{b}v\in L^{\infty}(\R_+;L^2(\Omega))$;
 \item $\diver(bv)=0$ in $\Omega$ and $bv\cdot\mb n=0$ on $\partial\Omega$ in weak sense, see \eqref{imperm2};
 \item $\curl v=b\omega$ in the sense of distributions on $\R_+\times \Omega$;
 \item for all $\Phi\in C_c^{\infty}([0,\infty)\times \Omega)$ it holds
 \begin{equation}\label{eq:vorticity}
 \int_0^{\infty}\int_{\Omega}\partial_t \Phi b\omega\dd x\dd t+\int_0^{\infty}\int_{\Omega}\nabla\Phi\cdot bv\omega\dd x \dd t+\int_{\Omega}b\omega^0\Phi(0)\dd x=0.
 \end{equation}
 \end{enumerate}
\end{definition}
The divergence and tangency boundary conditions are verified in a weak sense that will be defined in the following section (namely, in \eqref{imperm} and in \eqref{imperm2}).

We recall next the result obtained by the second author together with Pausader and Nguyen in \cite[Theorem 1.6]{LNP}: 
\begin{theorem}[\cite{LNP}]\label{thm:existencenonsmooth}
 Let $(\Omega,b)$ be a lake satisfying Definition~\ref{def-generallake}. For any $\omega^0\in L^\infty(\Omega)$ and $\gamma\in \R$, there exists a global weak solution $(v,\omega)$ of the lake equations in the vorticity formulation on the lake $(\Omega,b)$ with initial vorticity $\omega^0$ and initial circulation $\gamma \in \R$. This solution enjoys a Hodge decomposition, its circulation is conserved in time and $ \|\omega \|_{L^{\infty}(\R_{+}\times \Omega)}\leq \|\omega^0\|_{L^{\infty}(\Omega)}$.
\end{theorem}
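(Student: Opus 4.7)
The plan is to construct $(v,\omega)$ as a limit of classical solutions on a sequence of regularised lakes, following a compactness-stability strategy in the spirit of \cite{LNP}.

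First, I would set up the Biot--Savart--Hodge reconstruction of the velocity on a fixed lake $(\Omega,b)$. The constraint $\diver(bv)=0$ together with the tangency condition $bv\cdot\mb n=0$ yields a stream function $\psi$ such that $bv=\nabla^\perp\psi$, with $\psi=0$ on $\partial\widetilde\Omega$ and $\psi=\alpha$ on $\partial\cal I$ for some constant $\alpha$. The identity $\curl v=b\omega$ then becomes the degenerate elliptic equation
\[
\diver\left(\tfrac1b\nabla\psi\right)=b\omega \quad \text{in }\Omega,
\]
and the splitting $\psi=\psi_0+\alpha\,\phi$, where $\phi$ is the unique harmonic profile attached to the island, reduces the determination of $\alpha$ to a single linear equation fixed by the prescribed circulation $\gamma$. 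This gives a linear reconstruction $v=K[\omega,\gamma]$.

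Second, I would regularise the lake by setting $b_n:=b+\tfrac1n$ and, if needed, smoothing $\partial\widetilde\Omega$ and $\partial\cal I$ from the outside to obtain a sequence of uniformly non-degenerate smooth lakes $(\Omega_n,b_n)$. For such lakes the well-posedness of \cite{LOTiti2} delivers a unique global weak solution $(v_n,\omega_n)$ with initial data $(K_n[\omega^0,\gamma],\omega^0)$, suitably restricted. From the transport structure~\eqref{eq:transport} I obtain the uniform maximum principle $\|\omega_n\|_{L^\infty}\le\|\omega^0\|_{L^\infty}$, the formal energy identity gives $\sqrt{b_n}\,v_n\in L^\infty(\R_+;L^2)$, and the circulation $\gamma$ is conserved.

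Third, I would pass to the limit. Extracting a subsequence yields $\omega_n\wstar\omega$ in $L^\infty$ and $\sqrt{b_n}\,v_n\wstar\sqrt{b}\,v$ in $L^2$. Uniform weighted estimates on $\psi_n$ derived from the degenerate div-curl system provide strong compactness of $v_n$ on compact subsets of $\Omega$ where $b\ge\theta>0$. Combining this with an Aubin--Lions-type argument on the continuity equation $\partial_t(b_n\omega_n)+\diver(b_nv_n\omega_n)=0$ yields time-equicontinuity of $b_n\omega_n$ in a negative-order Sobolev space, which suffices to pass to the limit in the bilinear flux $b_nv_n\omega_n$. The weak divergence-free and tangency conditions, as well as~\eqref{eq:vorticity}, are inherited by $(v,\omega)$, while the Mosco convergence mentioned in Remark~\ref{rem:gamma} propagates the prescribed circulation to the limit and identifies the harmonic component of the Hodge decomposition.

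The main obstacle is the passage to the limit in the nonlinear term close to the set $\{b=0\}$: the strong compactness of $v_n$ degenerates there, so the global energy bound alone is insufficient, and one must exploit refined weighted Hodge estimates on $\psi_n$ together with the weak form of the anelastic constraint $\diver(bv)=0$ to localise the analysis to regions where $b_n\to b$ uniformly from below and to control the contribution near the shore and the island.
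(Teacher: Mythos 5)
This theorem is not proved in the paper at all: it is quoted verbatim from \cite{LNP} (Theorem 1.6 there), and the only indication the present paper gives of its proof is the remark that ``the solutions are constructed by compactness when we approximate lakes as in Definition~\ref{def-generallake} by a sequence of smooth lakes.'' Your proposal reconstructs exactly that architecture --- Hodge/Biot--Savart reconstruction of $v$ from $(\omega,\gamma)$ via the degenerate elliptic problem, approximation by regular lakes, uniform transport and energy bounds, and a compactness--stability passage to the limit --- so in spirit it matches. The one genuine divergence is the choice of approximating lakes: you lift the depth to $b_n=b+\tfrac1n$ and invoke the non-degenerate theory of \cite{LOTiti2}, whereas \cite{LNP} approximates by \emph{smooth} lakes in the sense of Bresch--M\'etivier, which are still allowed to degenerate on the shore, and the well-posedness input is \cite{BM}. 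Your route is viable but note two points. First, \cite{LOTiti2} needs $b_n$ smooth as well as bounded below, so you must mollify $b$ in addition to lifting it; and lifting $b$ changes the weighted space $X_{b_n}$ and the nature of the shore condition non-trivially, so the convergence $\tfrac1{\sqrt{b_n}}\nabla\psi_n\to\tfrac1{\sqrt b}\nabla\psi$ genuinely requires the $\gamma$-/Mosco-convergence and capacity machinery of \cite{LNP} --- this stability step is the entire content of that paper and is the part your sketch compresses into one sentence. Second, your worry about the nonlinear flux ``near the shore'' is actually moot for the vorticity formulation of Definition~\ref{defi:vorticity}: the test functions are compactly supported in $\Omega$, where Definition~\ref{def-generallake}\,(2) gives $b\geq\theta_K>0$, so interior strong compactness of $v_n$ plus the uniform $L^\infty$ bound on $\omega_n$ already suffices there; the delicate object is rather the harmonic component $\alpha_n(t)\tfrac1{b_n}\nabla^\perp\psi^1_n$ attached to the island, whose uniform control and convergence (via the positive capacity of $\mathcal I$) is what actually fixes the circulation in the limit.
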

The solutions in Theorem~\ref{thm:existencenonsmooth} are constructed by compactness when we approximate lakes as in Definition~\ref{def-generallake} by a sequence of smooth lakes. For such weak solutions, we need to define a generalized notion of tangency and circulation, see \eqref{imperm} and \eqref{eq:gencirc} later.

Uniqueness of these solutions, called ``weak interior solutions'', is not known to hold. If in addition the lake is assumed to be smooth ($b$ and the boundary are $C^3$), it is proved in Theorem 2.3 in \cite{BM} (see Proposition 2.12 in \cite{LNP} for not simply connected domains) that a Calder\'on-Zygmund-type inequality holds for the solution of the div-curl elliptic problem \eqref{eq:elliptic} below. Therefore, in this case, $v$ is continuous up to the boundary and we can define the tangency condition $v\cdot n=0$ and the circulation $\int_{\partial \cal I}v\cdot\tau$ in the classical sense. Moreover, these additional regularity properties allow one to adapt the proof of Yudovich to obtain the uniqueness of global weak solution in the class $(v,\omega)\in L^\infty_{\loc}(\R_{+};L^2(\Omega))\times L^\infty_{\loc}(\R_{+} \times \Omega)$, see \cite{BM,LNP}. For smooth lakes, the $L^\infty$-norm of the vorticity and the circulation (Kelvin's theorem) are conserved.

\subsubsection*{The evanescent island} 
 Our first main result characterizes the limit as $\eps\rightarrow 0$ for a vanishing island (see Assumption~\ref{ass:evanescent}). 
As the domains depend on $\varepsilon$, all functions are intended to be defined on $\Omega$ by extension to $0$, in particular on the island $\Ieps$. Our first main result reads as follows:
\begin{theorem}\label{thm:maingeneral}
Let $(\Weps,\beps)$ be a sequence of lakes with one non-degenerated island (as in Definition~\ref{def-generallake}), which converges (in the sense of Assumption~\ref{ass:evanescent}) to a punctured lake $(\Omega,b)$ (as in Definition~\ref{ass:b}). Given $\gamma\in \R$ and $\omega^0\in L^{\infty}(\Omega)$, let $(\veps,\weps)$ be a global weak solution given in Theorem~\ref{thm:existencenonsmooth} with initial vorticity $\omega^0$ and circulation $\gamma \in \R$. As $\eps\rightarrow 0$, there exists a subsequence, still denoted by $(\veps,\weps)$, such that
 \begin{equation*}
\begin{aligned}
 \sqrt{\beps}\veps&\rightarrow \sqrt{b}v \quad \text{strongly in} \quad L^{2}_{\loc}(\R_{+};L^2(\Omega)),\\
 \weps&\rightharpoonup^{\ast} \omega \quad \text{weakly-$\ast$ in} \quad L^{\infty}(\R_{+}\times \Omega),
\end{aligned}
\end{equation*}
where $(v,\omega)$ is a solution of the vorticity formulation in $\R_{+}\times \Omega$, in the sense of Definition~\ref{defi:vorticity}, except that
\[
\curl v=b\omega+\gamma\delta_{0} \quad \text{in} \quad \mathcal{D}'([0,\infty)\times \Omega).
\]
In addition, $v$ satisfies the Hodge-decomposition \eqref{eq:Hodgelimit} and $v\in L^{\infty}(\R_{+};L_{\loc}^p(\Omega))$ with $\frac{1}{p}=\frac{1}{2}+\frac{1}{q}$, where $q\in[2,\frac{4}{a_1})$. If further $a_0\in[0,1)$, then $v \in L^{\infty}(\R_{+};L^p(\Omega))$.
 \end{theorem}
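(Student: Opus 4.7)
The plan is to combine a Hodge decomposition of $\veps$ adapted to the multiply connected lakes $(\Weps,\beps)$, uniform weighted estimates for the associated stream functions, and a careful passage to the limit in \eqref{eq:vorticity}. First I would split
\[
\veps=\veps^{\omega}+\gamma \Heps,
\]
where $\veps^{\omega}$ is the $\beps$-divergence-free field carrying the potential vorticity $\weps$ with zero circulation around $\Ieps$, and $\Heps$ is the $\beps$-harmonic tangent field on $\Weps$ with unit circulation around $\Ieps$. Kelvin's theorem on the smooth approximating lakes used to build the weak solutions of Theorem~\ref{thm:existencenonsmooth} enforces that the coefficient in front of $\Heps$ is exactly $\gamma$, while Theorem~\ref{thm:existencenonsmooth} also yields $\|\weps\|_{L^{\infty}}\le\|\omega^{0}\|_{L^{\infty}}$ and $\sqrt{\beps}\veps\in L^{\infty}(\R_+;L^2(\Weps))$ uniformly in $\eps$. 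Assumption~\ref{ass:evanescent}(2) transfers these bounds to $\Omega$ after extension by zero on $\Ieps$.

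Second, the weighted Biot--Savart estimates announced in Section~\ref{sec:BS} -- which rely on $a_1>0$ and on $\sqrt{b}^{-1}\in L^{2+}_{\loc}$ from Remark~\ref{rem:b} -- give uniform bounds for the stream functions of $\veps^{\omega}$ in a weighted Sobolev space, which translate into $\veps^{\omega}\in L^{\infty}(\R_+;L^{p}_{\loc}(\Omega))$ with $1/p=1/2+1/q$, $q\in[2,4/a_1)$; analogous bounds, uniform on compacts of $\Oo$, hold for $\Heps$. Combining them with the $L^{\infty}$ bound on $\weps$ and the continuity equation $\partial_t(\beps\weps)+\diver(\beps\veps\weps)=0$ via an Aubin--Lions compactness argument, one extracts a subsequence realizing $\sqrt{\beps}\veps\to \sqrt{b}v$ strongly in $L^2_{\loc}(\R_+;L^2(\Omega))$ and $\weps\rightharpoonup^{\ast}\omega$ weakly-$\ast$ in $L^{\infty}$. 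For test functions $\Phi\in C_c^{\infty}([0,\infty)\times\Oo)$, Assumption~\ref{ass:evanescent}(3) provides local uniform lower bounds $\beps\ge\theta_K$ on compacts, and each term of \eqref{eq:vorticity} passes directly to the limit; a density argument, exploiting that $\{0\}$ has zero $H^1$-capacity together with $b\omega\in L^{\infty}$, extends the identity to all $\Phi\in C_c^{\infty}([0,\infty)\times\Omega)$.

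The main obstacle is the identification of $\curl v=b\omega+\gamma\delta_{0}$ on the whole of $\Omega$. For $\varphi\in C_c^{\infty}(\Omega)$, Green's formula on $\Weps$ gives
\[
\int_{\Weps}\veps\cdot\nabla^{\perp}\varphi\,\dd x=\int_{\Weps}\beps\weps\,\varphi\,\dd x+\gamma\,\overline{\varphi}_{\eps},
\]
where $\overline{\varphi}_{\eps}$ is an average of $\varphi$ along $\Ceps$, the circulation contribution arising from $\int_{\Ceps}\veps\cdot\tau\,\dd\sigma=\gamma$. Since $\Ieps$ shrinks to $\{0\}$, $\overline{\varphi}_{\eps}\to\varphi(0)$, while the uniform $L^{p}_{\loc}$ bound on $\veps$ obtained above yields (up to a further subsequence) $\veps\rightharpoonup v$ weakly in $L^{1}_{\loc}$, so the left-hand side converges to $\int_{\Omega}v\cdot\nabla^{\perp}\varphi\,\dd x$. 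For the vorticity term, the $L^{\infty}$ bound on $\weps$ combined with $\beps\to b$ in $L^1_{\loc}(\Oo)$ (Assumption~\ref{ass:evanescent}(3)) and dominated convergence handle the passage to the limit. One obtains $\int_{\Omega}v\cdot\nabla^{\perp}\varphi\,\dd x=\int_{\Omega}b\omega\,\varphi\,\dd x+\gamma\,\varphi(0)$, which is precisely $\curl v=b\omega+\gamma\delta_{0}$ in $\mathcal{D}'(\Omega)$.

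Finally, the limit Hodge decomposition \eqref{eq:Hodgelimit} is obtained by passing to the limit in the Hodge decomposition of $\veps$, and the asserted integrability of $v$ follows from H\"older applied to the Biot--Savart representation on the punctured lake, using $\sqrt{b}\,v\in L^{\infty}(\R_+;L^2(\Omega))$ together with $\sqrt{b}^{-1}\in L^{q}_{\loc}$ and $1/p=1/2+1/q$; the global $L^p$ bound under $a_0<1$ comes from the corresponding global integrability of $\sqrt{b}^{-1}$ given in Remark~\ref{rem:b}.
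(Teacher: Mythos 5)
Your overall architecture (a Hodge-type decomposition separating the vorticity-carrying part from a harmonic part with circulation $\gamma$, uniform weighted estimates for the stream functions, compactness, then identification of the defect at the origin) matches the paper's, and you correctly single out the identification of $\curl v=b\omega+\gamma\delta_0$ as the main obstacle. But your execution of that step has two genuine gaps. First, you claim a uniform $L^{\infty}(\R_+;L^p_{\loc}(\Omega))$ bound on $\veps$ itself, up to the island, and deduce $\veps\rightharpoonup v$ weakly in $L^1_{\loc}(\Omega)$. This is not available: converting $\sqrt{\beps}\veps\in L^2$ into $\veps\in L^p$ by H\"older requires $\sqrt{\beps}^{-1}\in L^q$ with $q\ge 2$ \emph{uniformly in $\eps$} near $\partial\Ieps$, where $\beps$ vanishes (like $d(x,\partial\Ieps)$ in the flooding example, so $\sqrt{\beps}^{-1}$ is only in $L^q$ for $q<2$ there). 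The paper explicitly flags this obstruction right after Lemma~\ref{lem:veps2}; only the \emph{limit} field $v=\sqrt{b}^{-1}\cdot\sqrt{b}v$ gains $L^p_{\loc}$ integrability (Remark~\ref{rem:vLp}), because the limit weight satisfies $\sqrt{b}^{-1}\in L^{2+}_{\loc}$ while the $\beps$ do not uniformly. Second, the Green's formula you invoke on $\Weps$ needs a trace of $\veps$ on the shrinking curve $\partial\Ieps$; the weak solutions of Theorem~\ref{thm:existencenonsmooth} have no such trace (which is precisely why the circulation is only defined in the generalized sense \eqref{eq:gencirc}, with a cutoff at a \emph{fixed} scale $\delta$), and even formally the convergence $\overline{\varphi}_{\eps}\to\varphi(0)$ would require a uniform bound on the total variation $\int_{\partial\Ieps}|\veps\cdot\tau|\,\dd s$, not merely on its (signed) integral $\gamma$ --- a bound that the energy estimate $\sqrt{\beps}\veps\in L^2$ does not provide.

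The paper circumvents both problems: it first proves everything on $\Oo$ (Proposition~\ref{prop:puncturedlake}), then observes that $T=\curl v(t,\cdot)-b\omega(t,\cdot)$ is a distribution supported at $\{0\}$ lying in $W^{-1,p}_{\loc}(\Omega)$ for some $p\in[1,2)$ --- using only the integrability of the limit $v$ --- so that by the structure result of Lemma~\ref{lem:delta} it equals $\beta\delta_0$; the coefficient $\beta=\gamma$ is then read off by pairing $T$ with the fixed cutoff $\chid$ and passing to the limit in the generalized circulation. You would need to replace your trace argument by something of this kind. A further point you treat as routine is in fact the paper's main new estimate: the uniform bound on the harmonic part fails for flat topography (the Euler case, Example~\ref{ex:1}) and holds here only because $\Cap_{b^{-1}}(\{0\})>0$ (Lemma~\ref{lem:capacity}), which is where $a_1>0$ enters; asserting that bounds "analogous" to those for the vorticity part hold for $\Heps$ skips the heart of the argument.
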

Several remarks are in order. 
\begin{enumerate}
\item The initial velocity field $\veps^0$ is uniquely determined by $\omega^0\Big|_{\Weps}$ and the circulation $\gamma$ around $\cal I_{\varepsilon}$ by virtue of Proposition~\ref{prop:BS}. Similarly, the initial data $v^0$ for the limiting lake is uniquely determined by $\omega^0$ and $\gamma$, see Corollary~\ref{coro:BSlimit}.
\item As in \cite{LNP}, when we consider a sequence of lakes where $\Omega_{\varepsilon}\subset\Omega$, we can state that the weak formulation of the vorticity equation \eqref{eq:vorticity} holds true for any $\Phi\in C_c^{\infty}(\R_+\times\overline{\Omega})$. A byproduct of this theorem is then a global existence result for the limit system with such test functions.
\item For the solutions under consideration in the present paper, additional difficulties arise close to the origin where $b$ vanishes, hence providing a suitable weak formulation of the velocity equation turns out to be a difficult task. 
Actually, the issues encountered are reminiscent to the ones occurring for compressible fluids in the presence of vacuum regions \cite{BJ}. 
In Section~\ref{sec:velocity}, we obtain the asymptotic velocity equation for \eqref{eq:lake-vel} posed on the lake $(\Oo,b)$ and discuss possibly strategies for the velocity formulation in $(\Omega,b)$.
\item Condition {\em (3)} in Definition~\ref{ass:b} arises naturally to ensure that $\Cap_{b^{-1}}(\{0\})>0$, see Lemma~\ref{lem:capacity}, where the weighted capacity is defined as
\begin{equation*}
 \Cap_{b^{-1}}(\{0\})=\inf_{\varphi\in\mathcal{B}(\{0\})}\int_{\Omega}\frac{1}{b}|\nabla\varphi|^2\dd x
\end{equation*}
with $\mathcal{B}(\{0\})=\{\varphi\in C_c^{\infty}(\Omega) : \varphi=1 \, \text{in a small neighborhood of }\, 0 \}$.

We recall that the unweighted $H^1$-capacity of a single point vanishes. To require that an obstacle has positive $H^1$-capacity is similar to assuming that the connected compact set is constituted by at least two points. The notion of positive capacity was exploited for the 2D-Euler equations \cite{GV-Lacave1} on singular domains and the lake equations \cite{LNP} to obtain non-erasable obstacles. Working with the weighted capacity constitutes a refinement of this analysis that allows us to consider islands collapsing to a single point for \eqref{eq:lake-vel}. 

Actually, the positive weighted capacity of $\{0\}$ suggests that it could be sufficient to consider the target system posed on the punctured lake $(\Omega\setminus\{0\},b)$, i.e. where test functions are supported in $\Omega\setminus\{0\}$, see Proposition~\ref{prop:puncturedlake} and Theorem~\ref{thm:velocity}, where it suffices to assume $a_{1}> 0$. We refer to Remark~\ref{rem:genass} for a deeper discussion on the generalized assumptions that include more singular geometries.

We mention that the weight $b^{-1}$ is a Muckenhoupt weight \cite{M} under Definition~\ref{ass:b} if in addition $a_0\in[0,1)$. There is an extensive literature on weighted Sobolev capacities, we refer the reader e.g. to the monograph \cite[Section 2]{HKM} and references therein. In \cite{Munteanu}, Munteanu considered degenerated lakes for possibly vanishing topographies assuming that the depth function $b$ is of class $C^2$ up to the boundary and a Muckenhoupt weight. These assumptions rule out power law depth functions $b=|x|^{a_1}$ and allow only for logarithmic depth functions such as $\Omega=B(0,5/2)\setminus\overline{B(0,1)}$ and $b=\log|x|\log(3-|x|)$.

\end{enumerate}

\begin{example}
To illustrate the effect of the degenerating topography, we consider the radial case with $\Omega=B(0,1)$ and $b$ radial such as $b(x)=|x|^{a_1}$. Then, we have the same harmonic function $H$ as for the Euler equations, namely the unique function satisfying
\begin{equation*}
\diver(b H)=0 \text{ in } \Omega, \quad b H\cdot \mb n=0 \text{ on } \partial\Omega, \quad \curl H = \delta_{0}
\end{equation*}
is $H= \frac{1}{2\pi}\frac{x^\perp}{|x|^2}$ and $\sqrt{b}H\in L^2(\Omega)$. Let $\beps=|x|^{a_1}-\eps$, $\Weps=B(0,1)\cap\{\beps>0\}$, hence the sequence of islands is given by $\mathcal{I}_\eps=\overline{B}(0,\eps^{1/a_{1}})$ and shrinks homothetically to the origin. Then, $H_{\varepsilon}=H\vert_{\Omega_{\varepsilon}}$ is the unique solution to
\begin{equation}\label{eq:Heps}
\diver(b_{\varepsilon} H_{\varepsilon})=\curl H_{\varepsilon}= 0 \text{ in } \Omega_{\varepsilon}, \ b_{\varepsilon} H_{\varepsilon}\cdot \mb n=0 \text{ on } \partial\Omega_{\varepsilon}, \ \oint_{\partial \cal I_{\varepsilon}} H_{\varepsilon}\cdot \mb \tau \dd s = 1.
\end{equation}
One then verifies that $\sqrt{\beps}H_\eps\in L^2(\Weps)$ uniformly bounded. We remark that the respective stream function, namely the function $\psi_\eps\in H^1_0(\Omega_\eps)$ such that $\nabla^{\perp}\psi_\eps=b_\eps H_\eps$, is uniformly bounded in $H^1(\Weps)$, whereas for the Euler equations we have $\psi_\eps=\frac1{2\pi}\ln |x|\vert_{\Omega_\eps}$. 
\end{example}

This suggests that the degeneracy of $b$ has a desingularizing effect on the respective stream function. We are thus led to introduce uniform estimates for $\sqrt{b_\eps}v_\eps=\sqrt{b_\eps}^{-1}\nabla\psi_\eps^{\perp}$, see Section~\ref{sec:BS}. This corresponds to studying the respective stream functions in weighted Sobolev spaces. This is quite natural, since the energy associated to \eqref{eq:lake-vel} yields a $L^2$-bound for $\sqrt{b_\eps}v_\eps$. The vanishing of $b_\eps$ close to the origin is pivotal for our method as it turns out to desingularize the respective stream functions. While the small obstacle problem is $L^2$-critical for the 2D Euler equations (see \cite{GV-Lacave1,ILL1}), we show that it is subcritical for the lake equations under suitable assumptions on the degeneracy of $b$.

When the bottom is flat and the domain $\Omega_\eps$ takes the form $\R^2\setminus \mathcal{I}_\eps$, the authors in \cite{ILL1} managed to solve the elliptic problem \eqref{eq:Heps} for general geometries via conformal mapping. In general, it is not clear how to adapt such an approach for the lake equations \eqref{eq:lake-vel}, but we refer to \cite{DekeyserVanS} for an interesting integral representation of the Green kernel for \eqref{eq:Heps} provided that $b$ does not vanish. The paper \cite{DekeyserVanS} also studies the presence of a point vortex in the lake equations and proves that the center of a Dirac mass moves in the direction of $\nabla^\perp b$, but without diffuse vorticity ($\omega\equiv 0$ therein). A byproduct of Theorem~\ref{thm:maingeneral} is the existence of a point vortex with a diffuse part, where the point vortex is stuck to the origin, where $\nabla b=0$ provided that $b$ is continuously differentiable. Here, $b(0)=0$ and the stability of the point vortex under a uniformly increasing water level yielding $b(0)>0$ is an interesting problem. 

Finally, we mention that uniqueness of weak solutions is a delicate open question for the lake equations posed on non-smooth lakes. Indeed, if the topography vanishes on the boundary, uniqueness is only known for smooth lakes \cite{BM, LNP}.
 It is not clear whether the Calder\'on-Zygmund type inequalities introduced in \cite{BM} can be recovered when $b(0)=0$. Therefore, the uniqueness of weak solutions to our limit system for both velocity and vorticity formulation seems to be a difficult open question, even when $\gamma=0$. When $\gamma\neq0$, the weak solution considered in Theorem~\ref{thm:maingeneral} contains a Dirac mass in the vorticity. Even for the Euler equations, uniqueness in the presence of a Dirac mass in the vorticity is only established when $\omega^{0}$ is constant in a neighborhood of point vortex (see \cite{lacave-miot} for more details). Concerning the linear case, we mention \cite{BJ}, where a strategy for uniqueness of weak solutions to linear advection equations of type \eqref{eq:transport} is pointed out.

\subsubsection*{The emergent island}

\begin{theorem}\label{thm:main2general}
Let $(\Weps,\beps)$ be a sequence of lakes without island (as in Definition~\ref{def-generallake}), which converges (in the sense of Assumption~\ref{ass:emergent}) to a punctured lake $(\Omega,b)$ (as in Definition~\ref{ass:b}). Given $\omega^0\in L^{\infty}(\Omega)$, let $(\veps,\weps)$ be a global weak solution provided by Theorem~\ref{thm:existencenonsmooth} with initial vorticity $\omega^0$. As $\eps\rightarrow 0$, there exists a subsequence, still denoted by $(\veps,\weps)$, such that
\begin{equation*}
\begin{aligned}
 \sqrt{\beps}\veps&\rightarrow \sqrt{b}v \quad \text{strongly in} \quad L^{2}_{\loc}(\R_{+};L^2(\Omega)),\\
 \weps&\rightharpoonup^{\ast} \omega \quad \text{weakly-$\ast$ in} \quad L^{\infty}(\R_{+}\times \Omega),
\end{aligned}
\end{equation*}
where $(v,\omega)$ is a solution of the vorticity formulation in $\R_{+}\times \Omega$, in the sense of Definition~\ref{defi:vorticity}. In addition, $v\in L^{\infty}(\R_{+};L_{\loc}^p(\Omega))$ with $\frac{1}{p}=\frac{1}{2}+\frac{1}{q}$, where $q\in[2,\frac{4}{a_1})$. If further $a_0\in[0,1)$, then $v\in L^{\infty}(\R_{+};L^p(\Omega))$.
\end{theorem}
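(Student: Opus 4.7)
The plan is to adapt the strategy underlying Theorem~\ref{thm:maingeneral}, but in the simpler situation where no circulation is present: since each $\Omega_\eps$ is simply connected and $\diver(\beps\veps)=0$, there is a unique stream function $\psi_\eps\in H_0^1(\Omega_\eps)$ with $\nabla^\perp\psi_\eps=\beps\veps$, and no extra harmonic correction carrying a non-trivial loop integral around an island. Consequently no Dirac mass will appear at the origin in the curl of the limit velocity, and the proof reduces to establishing the appropriate compactness and passing to the limit in the weak vorticity formulation of Definition~\ref{defi:vorticity}.

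First I would collect a priori estimates. The standard energy identity for \eqref{eq:lake-vel} together with the conservation $\|\weps\|_{L^\infty}\leq\|\omega^0\|_{L^\infty}$ furnished by Theorem~\ref{thm:existencenonsmooth} yields a uniform bound for $\sqrt{\beps}\veps$ in $L^\infty(\R_+;L^2(\Omega_\eps))$. Extending $\veps$ by zero to $\Omega$ and writing $\veps=\sqrt{\beps}^{-1}\cdot(\sqrt{\beps}\veps)$, Hölder's inequality combined with the hypothesis $\sqrt{\beps}^{-1}\to\sqrt{b}^{-1}$ in $L^q_{\loc}(\Omega)$ from Assumption~\ref{ass:emergent}(2) produces the claimed $L^\infty(\R_+;L^p_{\loc}(\Omega))$ bound with $\frac{1}{p}=\frac{1}{2}+\frac{1}{q}$. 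The global bound for $v$ when $a_0\in[0,1)$ follows from the same computation using Remark~\ref{rem:b}, which grants a corresponding global integrability of $\sqrt{b}^{-1}$. The elliptic stream-function estimates of Section~\ref{sec:BS} (embodied in Proposition~\ref{prop:BS} and Corollary~\ref{coro:BSlimit}) supply the uniform bounds for $\psi_\eps$ in a weighted Sobolev space compatible with the degeneracy of $\beps$ near $0$.

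Next I would extract compactness. The weak-$\ast$ limit $\omega$ of $\weps$ exists trivially in $L^\infty(\R_+\times\Omega)$. For the velocity, the crucial point is the strong convergence of $\sqrt{\beps}\veps$ in $L^2_{\loc}(\R_+;L^2(\Omega))$. Compactness in time is obtained from the continuity equation $\partial_t(\beps\weps)=-\diver(\beps\veps\weps)$, which provides a uniform bound on $\partial_t(\beps\weps)$ in a negative Sobolev space; an Aubin--Lions argument then delivers strong time-compactness of $\beps\weps$ in appropriate spaces. Spatial compactness comes from the uniform weighted estimate on $\psi_\eps$: Hausdorff convergence $\Omega_\eps\to\Omega$ and the associated Mosco convergence (Remark~\ref{rem:gamma}) allow one to pass to the limit in the weak elliptic problem $-\diver(\beps^{-1}\nabla\psi_\eps)=\weps$ and identify a limit $\psi$ with vanishing trace on $\partial\Omega$. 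Defining $v$ through $\nabla^\perp\psi=bv$ and invoking Corollary~\ref{coro:BSlimit}, one secures $\sqrt{b}v\in L^\infty(\R_+;L^2(\Omega))$ and the asserted strong convergence.

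Finally I would pass to the limit in the weak vorticity formulation \eqref{eq:vorticity}. The linear and initial terms converge by the weak-$\ast$ limit of $\weps$ and by $\beps\to b$ in $L^1_{\loc}$. The nonlinear flux factorizes as $\nabla\Phi\cdot(\sqrt{\beps}\veps)(\sqrt{\beps}\weps)$, in which the first factor converges strongly in $L^2_{\loc}$ to $\sqrt{b}v$ while the second converges weakly-$\ast$ in $L^\infty$ to $\sqrt{b}\omega$, since $\sqrt{\beps}\to\sqrt{b}$ almost everywhere and is uniformly bounded. The identity $\curl v=b\omega$ in $\mathcal{D}'(\R_+\times\Omega)$ follows by passing to the limit in $\curl\veps=\beps\weps$, and no singular contribution at $\{0\}$ arises because no circulation is present. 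The tangency boundary condition $bv\cdot\mathbf{n}=0$ on $\partial\Omega$ is inherited from $\psi_\eps=0$ on $\partial\Omega_\eps$ via the Mosco limit. The main technical obstacle I anticipate is the strong convergence of $\sqrt{\beps}\veps$ close to the origin where the weight $\beps^{-1}$ becomes singular; this is precisely where the weighted-capacity analysis of Section~\ref{sec:BS}, relying on the positivity of $\Cap_{b^{-1}}(\{0\})$ guaranteed by $a_1\in(0,2)$ (Lemma~\ref{lem:capacity}), provides the desingularizing estimates needed to close the compactness argument.
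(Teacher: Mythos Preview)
Your overall architecture is right---stream function, no harmonic part, weak limit for $\weps$, strong limit for $\sqrt{\beps}\veps$, then pass to the limit in \eqref{eq:vorticity}---but the mechanism you propose for the strong $L^2$ convergence of $\sqrt{\beps}\veps$ is not the one that actually works, and your final paragraph misidentifies the key ingredient.

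The weighted-capacity argument (Lemma~\ref{lem:capacity}) plays \emph{no role} in the emergent-island case. That lemma is used in Section~\ref{sec:BS} solely to bound the $\beps$-harmonic functions $\psi_\eps^1$ carrying the circulation, which are absent here since $\Omega_\eps$ is simply connected. The degeneracy of $b$ at the origin causes no special difficulty for the Dirichlet stream function $\psi_\eps^0$: the uniform $X_\eps$ bound comes straight from the energy identity and Poincar\'e (Lemma~\ref{lem:elliptic}), and needs only $\|\beps\|_{L^\infty}$ uniformly bounded.

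The actual route to strong convergence in the paper (Lemma~\ref{lem:convergence-psie}) is a norm-convergence argument, not Aubin--Lions on $\beps\weps$. One first bounds $\partial_t\psi_\eps$ in $L^\infty(\R_+;X_\eps)$ by testing $\diver(\beps^{-1}\nabla\partial_t\psi_\eps)=-\diver(\beps\veps\weps)$ against $\partial_t\psi_\eps$; this yields $\psi_\eps\to\psi$ strongly in $C([0,T];L^2)$. Then the energy identity
\[
\int_0^T\!\!\int_{\Omega_\eps}\frac{1}{\beps}|\nabla\psi_\eps|^2\,\dd x\,\dd t
=-\int_0^T\!\!\int_{\Omega_\eps}\beps\weps\,\psi_\eps\,\dd x\,\dd t
\]
passes to the limit because $\beps\weps\rightharpoonup^\ast b\omega$ and $\psi_\eps\to\psi$ strongly. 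To identify the right-hand limit as $\int_0^T\int_\Omega b^{-1}|\nabla\psi|^2$ one needs the same identity for the limit equation tested against $\psi$ itself, and this is where the crucial ingredient enters: the density of $C_c^\infty(\Omega)$ in $X_b(\Omega)$ (Proposition~\ref{prop density}), which is the genuine contribution of the punctured-lake structure. Weak $L^2$ convergence of $\sqrt{\beps}^{-1}\nabla\psi_\eps$ together with convergence of norms then upgrades to strong $L^2$ convergence. Your Aubin--Lions sketch on $\beps\weps$ would give compactness in a negative space but does not by itself produce the strong $L^2$ convergence of $\sqrt{\beps}\veps=\sqrt{\beps}^{-1}\nabla^\perp\psi_\eps$ that the nonlinear term requires.
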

We remark that $\curl v=b\omega\in L^{\infty}$ does not include a Dirac mass in contrast to Theorem~\ref{thm:maingeneral}. 
As $(\Weps,\beps)$ is simply connected for $\eps>0$, the velocity field is uniquely determined by $\weps$ and the proof of Theorem~\ref{thm:main2general} simplifies substantially compared to the one of Theorem~\ref{thm:maingeneral}.

\medskip

Combining our two main theorems, the question of continuity when $b_{\varepsilon}=b\pm \varepsilon$ and $\eps\to 0$ is then totally solved when the circulation $\gamma$ is equal to zero. 

A related interesting question for $\gamma \neq 0$ would be to study the occurence of an island as in Theorem~\ref{thm:main2general}, starting from a sort of vortex-wave system \cite{Marchioro-Pulvirenti} for the lake equations. 

Putting together our analysis with the stability result in \cite{LNP}, a possible extension could cover the case of multiple islands with one or several shrinking to a point or appearing (because this island is the lowest), as it was considered in \cite{L07} for the 2D Euler equations.

\medskip

The remainder of this article is divided in four sections. Section~\ref{sec:main} introduces the Hodge type decomposition and provides uniform estimates for the stream functions. Section~\ref{sec:evanescent} is dedicated to the proof of Theorem~\ref{thm:maingeneral}, while Section~\ref{sec:emergent} provides the proof of Theorem~\ref{thm:main2general}. Section~\ref{sec:velocity} addresses the asymptotic velocity formulation. Finally, Appendix~\ref{app:limit} introduces the density of smooth test functions in the weighted space.

\bigskip
\noindent
{\bf Acknowledgements.} This work is supported by the French National Research Agency in the framework of the ``Investissements d'avenir'' program (ANR-15-IDEX-02) and of the project ``SINGFLOWS'' (ANR-18-CE40-0027-01). E.M. acknowledges the project ``INFAMIE'' (ANR-15-CE40-01).

\section{Setup of the mathematical framework and uniform bounds}\label{sec:main}

\subsection{Lakes without degenerated island and elliptic problems}\label{sec:recall}

In this section, we set up the mathematical framework for the study the lake equations, we refer to \cite{LNP} for full details and complete proofs. In the following, we consider a lake $(\Omega_{\varepsilon},b_{\varepsilon})$ as in Definition~\ref{def-generallake} with at most one (non-degenerated) island and we recall that we denote by $\Weps=\Tweps\setminus \mathcal{I}_{\eps}$ in the case of one island.

As for the 2D-Euler equations, the main ingredient is to use conserved quantities for the vorticity. To that end, one requires the reconstruction of the velocity field $\veps$ in terms of the vorticity $\weps$, through the following div-curl problem
\begin{equation}\label{eq:divcurl}
\diver(\beps\veps)=0 \text{ in } \quad \Weps, \quad \curl\veps=\beps\weps \text{ in } \Weps, \quad \beps\veps\cdot \mb n=0 \text{ on } \partial\Weps .
\end{equation}
The second equation reads in $\mathcal{D}'(\Omega_{\varepsilon})$, whereas the first and third ones have to be understood in the following weak sense:
\begin{equation}\label{imperm}
 \int_{\Omega_{\varepsilon}} \beps(x) \veps(x) \cdot h(x)\dd x = 0, 
\end{equation}
for any test function $h$ in the function space $G(\Omega_{\varepsilon})$ defined by
\begin{equation*}
G (\Omega_{\varepsilon}):=\Big \{w\in L^2(\Omega_{\varepsilon}) \ : \ w=\nabla p, \ \text{ for some } p\in H^1_{\loc}(\Omega_{\varepsilon}) \Big \}.
\end{equation*}
For $\beps \veps \in L^2(\Omega_{\varepsilon})$, such a condition in \eqref{imperm} is equivalent to
\begin{equation*}
\beps \veps \in \mathcal{H}(\Omega_{\varepsilon}) ,
\end{equation*}
where 
\begin{equation*}
\mathcal{H}(\Omega_{\varepsilon})= \text{ the closure in $L^2$ of } \{ \varphi \in C_c^\infty(\Omega_{\varepsilon}) \ | \ \diver \varphi =0 \}. 
\end{equation*}
This equivalence can be found, for instance, in \cite[Lemma III.2.1]{Galdi} where it was pointed out that if $\Omega_{\varepsilon}$ is a regular bounded domain and if $\beps \veps$ is a sufficiently smooth function, then $\beps \veps$ verifies \eqref{imperm} if and only if $\diver(\beps\veps) = 0$ and $\beps \veps \cdot \mb n\vert_{\partial \Omega_{\varepsilon}} = 0$.

Similarly to \eqref{imperm}, the weak form of the divergence free and tangency conditions on $\beps \veps\in L^{2}(\R_{+}\times \Omega_{\varepsilon})$ also reads: 
\begin{equation} \label{imperm2}
\forall h \in C_c^\infty\left([0,+\infty); G(\Omega_{\varepsilon})\right), \quad \int_{\R_+} \int_{\Omega_{\varepsilon}} \beps(x) \veps(t,x) \cdot h(t,x) \, \dd x\dd t = 0.
\end{equation}

 The first and third conditions in \eqref{eq:divcurl} imply that the div-curl problem is related to the following elliptic problem on the stream function $\psi_{\varepsilon}\in H^1_{0}(\widetilde{\Omega}_{\varepsilon})$ 
 \begin{equation}\label{eq:elliptic}
 \diver\Big(\frac{1}{\beps}\nabla\psi_{\eps} \Big)=f_{\eps} \text{ in } \Weps, \quad \partial_{\tau}\psi_{\eps} =0 \text{ on } \partial\cal I_{\varepsilon},
\end{equation}
with the relation $\beps\veps = \nabla^\perp \psi_{\varepsilon}$ and where $f_{\varepsilon}$ will be $b_{\varepsilon}\omega_{\varepsilon}$. When the depth function is allowed to vanish at the beaches, \eqref{eq:elliptic} is not uniformly elliptic. Here and below, we refer to an elliptic problem as degenerated whenever it lacks uniform ellipticity.

To solve this problem, we first introduce the similar elliptic problem with Dirichlet boundary condition on the island $\mathcal{I}_\eps$ and the outer boundary
\begin{equation}\label{eq:elliptic0}
 \diver\Big(\frac{1}{\beps}\nabla\psi_{\eps}^0 \Big)=f_{\eps}\quad \text{in} \quad \Weps, \qquad \psi_{\varepsilon}\in H^1_{0}({\Omega}_{\varepsilon}),
\end{equation}
for which existence and uniqueness is studied in \cite{LNP} in the following space
\begin{equation}\label{eq:Xeps}
X_{\beps}(\Weps)=\left\{\psi \in H_0^1(\Weps):\ \quad \frac{1}{\sqrt{\beps}}\nabla \psi\in L^2(\Weps)\right\}.
\end{equation}
For shortness, we will often write $X_\eps$ instead to $X_{\beps}(\Weps)$. The function space $X_b(\Omega)$ for the limiting topography is defined analogously. We recall that $X_{\beps}(\Weps)$ is a Hilbert space with inner product
\begin{equation*}
 \left\langle f,g\right\rangle_{X_{\eps}}=\int_{\Weps}\frac{1}{\beps}\nabla f \cdot \nabla g\,\dd x.
\end{equation*}
One of the most important properties showed in \cite{LNP} is the density of $C_c^{\infty}(\Weps)$ in $X_{\eps}$. Such a result was stated there for smooth lakes without degenerated island. We improve the proof in Appendix~\ref{app:limit} (in particular Lemma~\ref{HardyLem}) to remark that the density property also holds for lakes considered in Definitions~\ref{def-generallake} and \ref{ass:b}. The main consequence of the density result is the uniqueness of solutions for \eqref{eq:elliptic0} and for the following Hodge decomposition. The density property in the punctured lake will be also used for the compactness argument in Section~\ref{sec:emergent}. 

Hence, for a simply connected lake, there exists a unique solution in $X_{\eps}$ of \eqref{eq:elliptic}. However, when the domain $\Omega_{\varepsilon}$ is not simply connected (i.e. with islands), the vorticity is not sufficient to uniquely determine the velocity field, namely to obtain uniqueness in \eqref{eq:elliptic}. We have to analyze functions of harmonic type.

\begin{definition}\label{defi:bepsharmonic}
A function $\Phi_{\eps}$ is called $\beps$-harmonic if
\begin{equation*}
 \Phi_{\eps}\in H_0^1(\widetilde{\Omega}_{\varepsilon}), \qquad \frac{1}{\sqrt{\beps}}\nabla\Phi_{\eps}\in L^2(\Weps),
\end{equation*}
is solution to
\begin{equation*}
 \diver\Big(\frac{1}{\beps}\nabla\Phi_\eps\Big)=0 \quad \text{in} \quad \Weps, \qquad \partial_\tau\Phi_\eps=0 
 \quad \text{on} \quad \partial \Omega_{\varepsilon}.
\end{equation*}
The space of $\beps$-harmonic functions in $\Omega_{\varepsilon}$ is denoted $\mathcal{H}_{\beps}(\Weps)$ (or $\mathcal{H}_{\eps}$). 
\end{definition}

The condition $\partial_{\tau} \Phi =0$ should be understood as the existence of a constant $C$ such that $\Phi-C\chi_{\delta}\in H^1_{0}(\Omega_{\varepsilon})$ where $\chi_{\delta}$ is a smooth cutoff function as in \eqref{eq:chidelta} below.

It was proved in \cite[Prop. 2.5]{LNP} that $\mathcal{H}_{\beps}(\Weps)$ is a vector space of dimension equal to the number of islands, which is one when we consider the case of the evanescent island, see Theorem~\ref{thm:maingeneral}. A basis is formed by the function 
\begin{equation}\label{def-phi1}
 \phi^1_{\varepsilon} \in \mathcal{H}_{\beps}(\Weps)\quad \text{such that} \quad \phi^1_{\varepsilon}\vert_{\partial\cal I_{\varepsilon}} \equiv 1.
\end{equation}
Another basis was also constructed in terms of the circulation. For that, we introduce the notion of generalized circulation. Let $\chi_{\delta}$ be a smooth cut-off function such that
\begin{equation}\label{eq:chidelta}
 \chi_{\delta}\in C_c^{\infty}(\widetilde{\Omega}_\eps), \quad \chid=1 \text{ in } B(0,\delta), \quad \chi_\delta=0 \text{ away from } B(0,2\delta).
\end{equation}
In all the sequel, $\delta>0$ is fixed such that for $\varepsilon$ small enough $\cal I_{\varepsilon}\subset B(0,\delta)\subset B(0,2\delta)\subset \Omega$. For a lake $(\Weps,\beps)$ as in Definition~\ref{def-generallake} with one island $\mathcal{I}_\eps$ and $\veps$ solution to \eqref{eq:divcurl} on $(\Weps,\beps)$ with $\sqrt{\beps}\veps\in L^2(\Weps)$ and $\curl \veps\in L^1(\Weps)$, the generalized circulation around the island $\cal I_{\varepsilon}$ is defined as 
\begin{equation}\label{eq:gencirc}
 \gamma(\veps):=-\int_{\Weps}\chid \curl \veps\dd x-\int_{\Weps}\nabla^{\perp}\chi_{\delta}\cdot\veps\dd x.
\end{equation}
The circulation $\gamma$ is well-defined as $\veps\in L^2(\supp(\nabla\chid))$ and independent from the choice of $\chi_{\delta}$, see \cite[Appendix A]{LNP}. When $\veps$ is sufficiently regular, this definition reads as the standard circulation $ \gamma(\veps)=-\int_{\Weps}\curl(\chi_{\delta}\veps)\dd x =\int_{\Weps}\diver(\chi_{\delta}\veps^{\perp})\dd x=\oint_{\partial \cal I_{\varepsilon}} \veps \cdot \tau$ where the last integral is considered in the counterclockwise direction, i.e. $\tau=-\mb n^\perp$. Then we introduce $\psi^1_{\varepsilon}$ such that
\begin{equation}\label{def-psi1}
 \psi^1_{\varepsilon} \in \mathcal{H}_{\beps}(\Weps)\quad \text{such that} \quad \gamma\left(\frac{1}{\beps}\nabla^{\perp}\pun\right)=1.
\end{equation}

We are now in position to give the decomposition of the velocity field $\veps$ by means of stream functions in the spirit of a Hodge decomposition, see Proposition 2.10 in \cite{LNP}.
\begin{proposition}\label{prop:BS}
Let $(\Weps,\beps)$ be a lake with one island $\mathcal{I}_\eps$ as in Definition~\ref{def-generallake}, $\gamma\in \R$ and $f_{\varepsilon}\in L^{2}(\Weps)$. Then there exists a unique vector field $\veps$ such that $\sqrt{\beps}\veps\in L^2(\Weps)$ with
\begin{equation*}
 \diver(\beps\veps)=0 \text{ in } \Weps, \quad \beps\veps\cdot \mb n=0 \text{ on } \partial\Weps,\quad \text{(in the sense of \eqref{imperm})}
\end{equation*}
and 
\begin{equation*}
 \curl\veps=f_{\varepsilon} \quad \text{in} \quad \mathcal{D}'(\Weps), \quad \gamma(\veps)=\gamma,
\end{equation*}
where $\gamma(\veps)$ denotes the generalized circulation of $\veps$ introduced in \eqref{eq:gencirc}. Moreover, one has a Hodge type decomposition
\begin{equation}\label{eq:BS}
 \veps=\frac{1}{\beps}\nabla^{\perp}\psi_{\eps}^0+\alphae\frac{1}{\beps}\nabla^{\perp}\pun,
\end{equation}
where $\psi_{\eps}^0$ is the unique solution in $X_{\eps}$ to \eqref{eq:elliptic0}, $\pun$ is the unique function verifying \eqref{def-psi1} and
\begin{equation*}
 \alphae=\gamma(\veps)+\int_{\Weps}f_{\varepsilon}\phie\dd x,
\end{equation*}
where $\phie$ is uniquely defined by \eqref{def-phi1}.
\end{proposition}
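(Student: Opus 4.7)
The plan is constructive: I would define $\veps$ by the explicit Hodge-type formula \eqref{eq:BS} with $\alphae$ to be fixed by the circulation constraint, and then verify each required property. The two building blocks are already supplied by the existing theory: first, the unique solution $\psi_\eps^0\in X_\eps$ of the degenerated Dirichlet problem \eqref{eq:elliptic0}, obtained by Lax--Milgram on the Hilbert space $(X_\eps,\langle\cdot,\cdot\rangle_{X_\eps})$ with uniqueness relying on the density of $C_c^\infty(\Weps)$ in $X_\eps$ ensured by Appendix~\ref{app:limit}; second, by \cite[Prop.~2.5]{LNP}, the one-dimensional space $\mathcal{H}_\eps$ of $\beps$-harmonic functions, spanned indifferently by $\phie$ (normalized on $\partial\Ieps$ as in \eqref{def-phi1}) and by $\pun$ (normalized by the circulation condition \eqref{def-psi1}).

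Setting $\veps:=\tfrac{1}{\beps}\nabla^\perp\psi_\eps^0+\alphae\,\tfrac{1}{\beps}\nabla^\perp\pun$, the bound $\sqrt{\beps}\veps\in L^2(\Weps)$ is immediate, while $\curl\veps=f_\eps$ in $\mathcal{D}'(\Weps)$ follows from \eqref{eq:elliptic0} and the $\beps$-harmonicity of $\pun$. For the weak divergence-free and tangency condition \eqref{imperm}, a single integration by parts reduces it to the vanishing of $\oint_{\partial\Weps}\psi\,\partial_\tau p\,\dd s$ for each stream function $\psi\in\{\psi_\eps^0,\pun\}$ and every $p\in H^1_{\loc}(\Weps)$ with $\nabla p\in L^2$; this holds since $\psi_\eps^0\in H_0^1(\Weps)$ vanishes on $\partial\Weps$, whereas $\pun$ is constant on each component of $\partial\Weps$ and the integral of a tangential derivative over a closed curve vanishes.

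The crux is the determination of $\alphae$. By linearity of the generalized circulation \eqref{eq:gencirc} and the normalization \eqref{def-psi1}, one has $\gamma(\veps)=\gamma(\tfrac{1}{\beps}\nabla^\perp\psi_\eps^0)+\alphae$, so matching $\gamma(\veps)=\gamma$ with the stated formula for $\alphae$ reduces to proving
\[
 \gamma\Big(\tfrac{1}{\beps}\nabla^\perp\psi_\eps^0\Big)=-\int_\Weps f_\eps\,\phie\,\dd x.
\]
I would combine \eqref{eq:gencirc} with $\curl(\tfrac{1}{\beps}\nabla^\perp\psi_\eps^0)=f_\eps$ to rewrite the left-hand side as $-\int\chid f_\eps\,\dd x-\int\tfrac{1}{\beps}\nabla\chid\cdot\nabla\psi_\eps^0\,\dd x$, and then feed the test function $\chid-\phie$ into the weak formulation of \eqref{eq:elliptic0}. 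This is admissible because $\chid$ and $\phie$ share the same boundary trace ($1$ on $\partial\Ieps$, $0$ on $\partial\widetilde\Omega_\eps$), so $\chid-\phie\in H_0^1(\Weps)$, while $\nabla\chid$ is supported where $\beps$ is uniformly bounded below, so $\chid-\phie\in X_\eps$. Combined with the orthogonality $\int\tfrac{1}{\beps}\nabla\psi_\eps^0\cdot\nabla\phie\,\dd x=0$, coming from the $\beps$-harmonicity of $\phie$ tested against $\psi_\eps^0\in H_0^1(\Weps)$, this yields the displayed identity after rearrangement.

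Uniqueness reduces to showing that the homogeneous problem has only the trivial solution: any $w$ with $\curl w=0$, $\diver(\beps w)=0$ weakly, $\gamma(w)=0$ and $\sqrt{\beps}w\in L^2(\Weps)$ will admit a stream function $\Psi$ with $\beps w=\nabla^\perp\Psi$ (single-valued thanks to the vanishing flux of $\beps w$ across any closed curve surrounding $\Ieps$), normalized in $H_0^1(\widetilde\Omega_\eps)$; the curl condition then places $\Psi\in\mathcal{H}_\eps$, and $\gamma(w)=0$ together with the one-dimensionality of $\mathcal{H}_\eps$ and the normalization of $\pun$ forces $\Psi=0$ and $\beps w=0$ a.e.\ in $\Weps$. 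I expect the main obstacle to be the integration-by-parts identity of the third paragraph: its rigor hinges on the admissibility of $\chid-\phie$ as a test function in the degenerated equation \eqref{eq:elliptic0}, which is ultimately backed by the density statement of Appendix~\ref{app:limit} on a possibly non-smooth lake where $\beps$ may vanish.
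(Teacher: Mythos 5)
The paper does not prove this proposition itself: it is imported verbatim from \cite[Prop.~2.10]{LNP}, and your constructive argument follows exactly the standard lines of that reference (Lax--Milgram for $\psi_\eps^0$ in $X_\eps$, one-dimensionality of $\mathcal{H}_{\beps}(\Weps)$, normalization of $\alphae$ through the generalized circulation, with the density of $C_c^\infty(\Weps)$ in $X_\eps$ justifying the test-function manipulations). Your key identity $\gamma\bigl(\tfrac{1}{\beps}\nabla^\perp\psi_\eps^0\bigr)=-\int_{\Weps} f_\eps\,\phie\,\dd x$ is correct --- note that $\chid-\phie=-\tilde\phi_\eps^1\in X_\eps$ is admissible precisely by the decomposition \eqref{eq:decompositionphi}, and the orthogonality step is the same computation the paper itself performs in \eqref{eq:circulation-aeps} --- so the proof is sound and takes essentially the same route.
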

The main consequence is that $\veps^0$ is uniquely defined by prescribing the potential vorticity $\omega^0$ and the circulation $\gamma$.
Such a result is also valid for lakes $(\Weps,\beps)$ as in Definition~\ref{def-generallake} without island, for which $\veps=\frac{1}{\beps}\nabla^{\perp}\psi_{\eps}^0$ and no notion of circulation is needed to uniquely determine the velocity field. 
 
\begin{remark}\label{rem:genass2}
The lakes considered in Definition~\ref{def-generallake} are slightly smoother than in \cite{LNP} because we have assumed (\emph{3})-(\emph{4}) in order to ensure the density of $C_c^{\infty}(\Weps)$ in $X_{\eps}$. Nevertheless, by the compactness argument in \cite{LNP}, we already know the existence part of the Hodge decomposition, whereas the uniqueness part is not necessary for the sequel.

In the same way, Definition~\ref{ass:b} allows us to state that $v^{0}$ is uniquely determined from $\omega^{0}$ and $\gamma$ (see Corollary~\ref{coro:BSlimit}). If we are not interested by the uniqueness of the elliptic problems, we can notice that our compactness analysis will give the existence of the Hodge decomposition without the need of (\emph{4}) in Definition~\ref{ass:b}. For a formulation in $\Oo$ we could even expect to generalize (\emph{3}) in Definition~\ref{ass:b}.
\end{remark}

\subsection{Uniform bounds}\label{sec:BS}

In what follows, all functions defined on $\Weps$ are intended to be extended by zero to the domain $\Omega$.

By the property of the solutions constructed in Theorem~\ref{thm:existencenonsmooth}, we directly deduce that the vorticity is uniformly bounded, independently from $\varepsilon$:
\begin{equation}\label{bd:vorticity}
 \|\weps \|_{L^{\infty}(\R_{+}\times \Weps)}\leq \|\omega^0\|_{L^{\infty}(\Weps)} \leq \|\omega^0\|_{L^{\infty}(\Omega)}.
\end{equation}
Next, we provide uniform $X_{\eps}$-bounds for the stream functions appearing in \eqref{eq:BS}, where the space $X_{\eps}$ is defined in \eqref{eq:Xeps}. For that purpose, we exploit several times that $C_c^{\infty}(\Weps)$ is dense in $X_{\eps}$ w.r.t the norm $\|\cdot\|_{X_{\eps}}$. For a smooth lake, this density property is proven in \cite[Lem. 2.1]{LNP}. In Appendix~\ref{app:limit}, we show that the respective density still holds on lakes characterized by Definition~\ref{def-generallake}, see Proposition~\ref{prop density}. The density result includes lakes with non-degenerated islands as in Definition~\ref{def-generallake} and punctured lakes, as well as vanishing or non-vanishing topographies at the outer boundary.

In both problems (evanescent and emergent islands), we prove a uniform bound of the solution $\psi^0_{\varepsilon}$ to \eqref{eq:elliptic0}. 
\begin{lemma}\label{lem:elliptic}
Let $(\Weps,\beps)$ be a sequence of lakes with at most one non-degenerated island (as in Definition~\ref{def-generallake}), which converges (in the sense of Assumption~\ref{ass:evanescent} or \ref{ass:emergent} respectively) to a punctured lake $(\Omega,b)$ (as in Definition~\ref{ass:b}). There exists $C=C(\Omega,b )>0$ such that for all $\eps>0$, the unique solution $\psi_{\eps}^0\in X_{\eps}$ to \eqref{eq:elliptic0} with $f_{\varepsilon}=\beps\omega_{\varepsilon}$ satisfies 
\[
\|\psi_{\eps}^0\|_{L^\infty(\R_{+}; H^1_{0}(\Omega_{\varepsilon}))}+ \Big\|\frac{1}{\sqrt{\beps}}\nabla\psi_{\eps}^0\Big\|_{L^\infty(\R_{+}; L^2(\Weps))}\leq C \|\omega^{0}\|_{L^\infty(\Omega)}.
\]
\end{lemma}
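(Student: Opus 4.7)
The plan is to apply the standard energy method by testing the elliptic equation \eqref{eq:elliptic0} against the solution $\psi_{\eps}^{0}$ itself. Since $\psi_{\eps}^{0}\in X_{\eps}$ and, by Proposition~\ref{prop density} in Appendix~\ref{app:limit}, the space $C_{c}^{\infty}(\Weps)$ is dense in $X_{\eps}$, this is a legitimate choice and yields, for every $t\in\R_{+}$, the energy identity
\[
\int_{\Weps}\frac{|\nabla \psi_{\eps}^{0}|^{2}}{\beps}\,\dd x \;=\; -\int_{\Weps}\beps\,\weps\,\psi_{\eps}^{0}\,\dd x.
\]
Note that the time variable enters only as a parameter through $\weps(t,\cdot)$, so all bounds derived pointwise in $t$ will transfer to $L^{\infty}(\R_{+};\cdot)$ at the end.

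I would then control the right-hand side as follows. Using \eqref{bd:vorticity} to bound $\|\weps\|_{L^{\infty}}$ by $\|\omega^{0}\|_{L^{\infty}(\Omega)}$, and the uniform $L^{\infty}$-bound on $\beps$ coming from $\beps\leq C b$ (Assumption~\ref{ass:evanescent}) or from Assumption~\ref{ass:emergent}, one obtains
\[
\Bigl|\int_{\Weps}\beps\,\weps\,\psi_{\eps}^{0}\,\dd x\Bigr| \leq C\,\|\omega^{0}\|_{L^{\infty}(\Omega)}\,\|\psi_{\eps}^{0}\|_{L^{2}(\Weps)}.
\]
Next, invoke the Poincaré inequality on $H_{0}^{1}(\Weps)$, whose constant is uniform in $\eps$ since $\Weps \subset \Omega$ with $\Omega$ of fixed bounded diameter, and use $\|\sqrt{\beps}\|_{L^{\infty}}\leq C$ to pass from the unweighted to the weighted norm:
\[
\|\psi_{\eps}^{0}\|_{L^{2}(\Weps)} \leq C\,\|\nabla\psi_{\eps}^{0}\|_{L^{2}(\Weps)} \leq C\,\|\sqrt{\beps}\|_{L^{\infty}}\Bigl\|\frac{\nabla\psi_{\eps}^{0}}{\sqrt{\beps}}\Bigr\|_{L^{2}(\Weps)}.
\]
Injecting this into the energy identity yields a quadratic-linear inequality in $\|\beps^{-1/2}\nabla\psi_{\eps}^{0}\|_{L^{2}(\Weps)}$, from which the weighted bound follows; the $H_{0}^{1}$-bound is then obtained from $\|\nabla\psi_{\eps}^{0}\|_{L^{2}(\Weps)} \leq \|\sqrt{\beps}\|_{L^{\infty}}\|\beps^{-1/2}\nabla\psi_{\eps}^{0}\|_{L^{2}(\Weps)}$. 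Taking the supremum in $t$ closes the argument.

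The main (and mild) obstacle is the uniformity of every constant in $\eps$. This relies on three ingredients that are already in place: the uniform $L^{\infty}$-bound on $\beps$ furnished by the convergence assumptions, the Poincaré constant on the nested domains $\Weps \subset \Omega$ being controlled only by the diameter of $\Omega$, and the density of $C_{c}^{\infty}(\Weps)$ in $X_{\eps}$ at each fixed $\eps$ proved in Appendix~\ref{app:limit}. It is important to note that no estimate depending on the degeneracy of $\beps$ near the shore or near the shrinking island is needed here: the weight $\beps^{-1}$ on the left precisely absorbs the loss produced by the possibly vanishing $\beps$ that multiplies $\weps$ on the right. This desingularizing role of the weight is exactly the phenomenon highlighted in the radial example preceding Section~\ref{sec:BS}.
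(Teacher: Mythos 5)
Your proposal is correct and follows essentially the same route as the paper: test \eqref{eq:elliptic0} against $\psi_{\eps}^0$ (justified by the density of $C_c^{\infty}(\Weps)$ in $X_{\eps}$), bound the right-hand side via \eqref{bd:vorticity} and the uniform $L^\infty$-bound on $\beps$, and close with a Poincar\'e inequality uniform in $\eps$. The only cosmetic difference is that the paper takes the Poincar\'e constant on a fixed large ball $B(0,R_0)\supset \Weps\cup\Omega$ rather than relying on $\Weps\subset\Omega$, which is needed because under Assumption~\ref{ass:emergent} the inclusion $\Weps\subset\Omega$ is not assumed; since the domains are uniformly bounded this changes nothing in substance.
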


\begin{proof} With $f_{\varepsilon}=\beps \omega_{\varepsilon}$, \eqref{eq:elliptic0} is satisfied in the sense of distributions for any test function compactly supported in $\Omega_{\varepsilon}$. For a lake in the sense of Definition~\ref{def-generallake}, $C^\infty_{c}(\Omega_{\varepsilon})$ is dense in $X_{\eps}$, see Proposition~\ref{prop density}, so we can choose $\psi_{\varepsilon}^0(t,\cdot)$ as a test function in the weak formulation. This gives for a.e. $t\in \R_+$
\begin{equation*}\begin{split}
 \int_{\Weps}\left|\frac{1}{\sqrt{\beps}}\nabla\psi_{\eps}^0(t,x)\right|^2\dd x&=-\int_{\Weps} \beps \omega_{\varepsilon}(t,x)\psi_{\eps}^0(t,x)\dd x\\
&\leq \|\beps \omega_{\varepsilon}(t,\cdot)\|_{L^2(\Weps)}\|\psi_{\eps}^0(t,\cdot)\|_{L^2(\Weps)}.\end{split}
\end{equation*}
Choosing $\Omega^0=B(0,R_{0})$ large enough such that $\Omega_{\varepsilon}\cup\Omega \subset \Omega^0$ for all $\varepsilon$, the Poincar{\'e} inequality applied on the domain $\Omega^0$ yields that there exists $C_{0}=C(\Omega^0)>0$ such that
\begin{equation*}\begin{split}
 \|\psi_{\eps}^0(t,\cdot)\|_{L^2(\Weps)}= \|\psi_{\eps}^0(t,\cdot)\|_{L^2(\Omega^0)}&\leq C_{0} \|\nabla \psi_{\eps}^0(t,\cdot)\|_{L^2(\Omega^0)}= C_{0}\|\nabla\psi_{\eps}^0(t,\cdot)\|_{L^2(\Weps)} \\
 &\leq C_{0} \| b_{\varepsilon}\|_{L^\infty}^{1/2} \left\|\frac{1}{\sqrt{\beps}}\nabla\psi_{\eps}^0(t,\cdot)\right\|_{L^2(\Weps)}.
\end{split}
\end{equation*}
It follows from \eqref{bd:vorticity}, that there exists $C_{1}=C(C_0,|\Omega^0|)>0$ such that
\begin{equation*}
\left\|\frac{1}{\sqrt{\beps}}\nabla\psi_{\eps}^0(t,\cdot)\right\|_{L^2(\Weps)}\leq C_{1} \| b_{\varepsilon}\|_{L^\infty(\Weps)}^{3/2}\|\omega^{0}\|_{L^\infty(\Weps)}.
\end{equation*}
The Poincar\'e inequality just above then implies
\begin{equation*}
\|\psi_{\eps}^0(t,\cdot) \|_{H^1_{0}(\Omega_{\varepsilon})} \leq C_{1} \| b_{\varepsilon}\|_{L^\infty(\Weps)}^{2}\|\omega^{0}\|_{L^\infty(\Weps)}.
\end{equation*}
The uniform bound of $\beps$ (see {\em (2)} in Assumptions~\ref{ass:evanescent} or \ref{ass:emergent} respectively) ends the proof of this lemma.
\end{proof}

 For the emergent island, this lemma is enough to derive the uniform estimate for $\sqrt\beps\veps=\frac{1}{\sqrt{\beps}}\nabla^\perp \psi_{\eps}^0$. 
\begin{corollary}\label{lem:veps1}
Let $(\Weps,\beps)$ be a sequence of lakes without island (as in Definition~\ref{def-generallake}), which converges (in the sense of Assumption~\ref{ass:emergent}) to a punctured lake $(\Omega,b)$ (as in Definition~\ref{ass:b}). Given $\omega^0\in L^{\infty}(\Omega)$, let $(\veps,\weps)$ be a global weak solution given by Theorem~\ref{thm:existencenonsmooth} with initial vorticity $\omega^0$. There exists $C=C(\Omega,b )>0$ such that for all $\eps>0$
\begin{equation*}
 \left\|\sqrt{\beps}\veps\right\|_{L^{\infty}(\R_{+};L^2(\Weps))}\leq C\|\omega^0\|_{L^{\infty}(\R_+\times \Weps)}.
\end{equation*}
\end{corollary}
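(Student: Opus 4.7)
\medskip

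The plan is to combine Proposition~\ref{prop:BS} with Lemma~\ref{lem:elliptic}. Since $(\Weps,\beps)$ is a lake without island under Assumption~\ref{ass:emergent}, the simply-connected variant of Proposition~\ref{prop:BS} (mentioned immediately after its statement) applies at a.e. time $t\geq 0$: there is no harmonic contribution and no circulation to prescribe, so $\veps(t,\cdot)$ is reconstructed from the vorticity alone as
\begin{equation*}
\veps(t,\cdot)=\frac{1}{\beps}\nabla^{\perp}\psi_{\eps}^{0}(t,\cdot),
\end{equation*}
where $\psi_{\eps}^{0}(t,\cdot)\in X_{\eps}$ is the unique solution of \eqref{eq:elliptic0} with source $f_{\varepsilon}=\beps\weps(t,\cdot)$. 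The source lies in $L^{2}(\Weps)$ because $\weps\in L^{\infty}(\R_{+}\times\Weps)$ by \eqref{bd:vorticity}, $\beps\in L^{\infty}(\Weps)$ by Assumption~\ref{ass:emergent}{\em(2)}, and $\Weps$ is bounded uniformly in $\eps$.

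Next I would use the pointwise identity
\begin{equation*}
|\sqrt{\beps}\veps|^{2}=\frac{1}{\beps}\,|\nabla\psi_{\eps}^{0}|^{2}\qquad\text{a.e. in }\Weps,
\end{equation*}
valid on $\{\beps>0\}=\Weps$, which yields $\|\sqrt{\beps}\veps(t,\cdot)\|_{L^{2}(\Weps)}=\|\tfrac{1}{\sqrt{\beps}}\nabla\psi_{\eps}^{0}(t,\cdot)\|_{L^{2}(\Weps)}$ for a.e. $t$. Applying the uniform estimate of Lemma~\ref{lem:elliptic} with $f_{\varepsilon}=\beps\weps$ then gives
\begin{equation*}
\|\sqrt{\beps}\veps\|_{L^{\infty}(\R_{+};L^{2}(\Weps))}=\left\|\frac{1}{\sqrt{\beps}}\nabla\psi_{\eps}^{0}\right\|_{L^{\infty}(\R_{+};L^{2}(\Weps))}\leq C\|\omega^{0}\|_{L^{\infty}(\Omega)},
\end{equation*}
with $C=C(\Omega,b)>0$ independent of $\eps$, which is the claim.

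There is no real obstacle here: once one observes that the absence of islands collapses the Hodge decomposition \eqref{eq:BS} to its first summand, the corollary is a direct restatement of Lemma~\ref{lem:elliptic} after identifying $\sqrt{\beps}\veps$ with $\tfrac{1}{\sqrt{\beps}}\nabla^{\perp}\psi_{\eps}^{0}$. The only point requiring minor care is the justification that $\veps(t,\cdot)$ coincides with the unique field supplied by Proposition~\ref{prop:BS}, which follows from item {\em(ii)} of Definition~\ref{defi:vorticity} together with the uniqueness statement in Proposition~\ref{prop:BS} applied at almost every time.
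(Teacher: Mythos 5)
Your proposal is correct and follows exactly the route the paper intends: the corollary is stated in the paper as an immediate consequence of Lemma~\ref{lem:elliptic}, using precisely the observation that for a lake without island the Hodge decomposition collapses to $\veps=\frac{1}{\beps}\nabla^{\perp}\psi_{\eps}^0$, so that $\|\sqrt{\beps}\veps(t,\cdot)\|_{L^2(\Weps)}=\|\frac{1}{\sqrt{\beps}}\nabla\psi_{\eps}^0(t,\cdot)\|_{L^2(\Weps)}$. Nothing is missing.
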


For the evanescent island, we have to analyze $\beps$-harmonic functions.
The main novelty of this section are uniform estimates on $\phi^1_{\varepsilon}$ and $\psi^1_{\varepsilon}$ in $X_{\eps}$ independent of the size of the island. We mention such estimates are not available for a flat topography $b=const$, namely for the 2D-Euler equations \cite{ILL1}.
\begin{example}\label{ex:1}
Let $\Omega=B(0,1)$, $r_{\eps}>0$ such that $r_{\eps}\rightarrow 0$ as $\eps\rightarrow 0$, and $\Ieps=B(0,r_\eps)$, i.e. $\Weps=B(0,1)\backslash B(0,r_\eps)$. First, consider $\beps$ independent from $x$ such that $\beps\rightarrow b$ with $b>0$. Then,
\[
\phi^1_{\varepsilon}= \frac{\ln |x|}{\ln r_{\varepsilon}} \qquad \text{and}\qquad \psi^1_{\varepsilon} =\frac{\beps\ln |x|}{2\pi},
\]
where $\phie$ and $\pun$ solve \eqref{def-phi1} and \eqref{def-psi1} respectively. We notice that
\begin{equation*}
 \|\nabla\phie\|_{L^2(\Weps)}\rightarrow 0, \qquad \|\nabla\psi_\eps^1\|_{L^2(\Weps)}\rightarrow \infty,
\end{equation*}
as $\eps\rightarrow 0$. The small obstacle problem for the $2D$-Euler equations is $L^2$-critical in terms of velocity and $H^1$-critical in terms of stream functions \cite{ILL1}. Second, we consider $b(x)=|x|^{\alpha}$ for $\alpha\in(0,2)$. A straightforward computation, see e.g. \cite[Section 2.23]{HKM}, yields the explicit representations of $b$-harmonic functions on $\Weps$:
\begin{equation*}
 \phie=\frac{|x|^{\alpha}-1}{r_{\eps}^{\alpha}-1} \qquad \text{and}\qquad \psi_\eps^1=\frac{|x|^{\alpha}-1}{2\pi\alpha}.
\end{equation*}
One has that 
\begin{equation*}
 \left\|\frac{1}{\sqrt{\beps}}\nabla\phie\right\|_{L^2(\Weps)}=\sqrt{\frac{2\pi\alpha}{1-r_\eps^{\alpha}}} \qquad \text{and}\qquad \left\|\frac{1}{\sqrt{\beps}}\nabla\pun\right\|_{L^2(\Weps)}=\sqrt{\frac{(1-r_{\eps}^{\alpha})}{2\pi\alpha}}.
\end{equation*}
\end{example}
The example showcases the effect of the degeneracy: for radially symmetric and vanishing topography $b$, the $b$-harmonic stream functions are uniformly bounded in weighted spaces. 

We begin by providing a uniform estimate for the sequence of $\beps$-harmonic functions $\phi^1_{\varepsilon}$ defined by \eqref{def-phi1} for lakes satisfying Assumption~\ref{ass:evanescent}.

\begin{lemma}\label{lem:uniformphieps}
Let $(\Weps,\beps)$ be a sequence of lakes with one non-degenerated island (as in Definition~\ref{def-generallake}), which converges (in the sense of Assumption~\ref{ass:evanescent}) to a punctured lake $(\Omega,b)$ (as in Definition~\ref{ass:b}). There exists $C=C(\Omega,b )>0$ such that for all $\eps>0$ sufficiently small, the unique solution $\phi_{\eps}^1$ to \eqref{def-phi1} satisfies 
\[
\|\phi_{\eps}^1\|_{H^1(\Omega_{\varepsilon})}+ \Big\|\frac{1}{\sqrt{\beps}}\nabla\phi_{\eps}^1\Big\|_{L^2(\Weps)}\leq C .
\]
\end{lemma}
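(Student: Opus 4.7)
The idea is to subtract the cutoff $\chid$ from $\phie$ so as to produce a function of $X_\eps$, then test the weak $\beps$-harmonic equation against it. The uniform bound will follow from the fact that the support of $\nabla\chid$ lies in an annulus on which $\beps^{-1}$ is controlled uniformly in $\eps$, thanks to part~$(3)$ of Assumption~\ref{ass:evanescent}.

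For $\eps$ small enough one has $\Ieps\subset B(0,\delta)$, so that $\chid\equiv 1$ in a neighborhood of $\partial\Ieps$ and $\chid\equiv 0$ in a neighborhood of $\partial\Tweps$. Setting $\tphie:=\phie-\chid$, we get $\tphie\in H^1_0(\Weps)$. Moreover, $\tfrac{1}{\sqrt{\beps}}\nabla\phie\in L^2(\Weps)$ by definition of $\mathcal H_\beps(\Weps)$, while $\nabla\chid$ is supported in the annulus $A_\delta:=\overline{B(0,2\delta)}\setminus B(0,\delta)$, which is compactly contained in $\Oo$; so by Assumption~\ref{ass:evanescent}~$(3)$ there exist $\theta_{A_\delta}>0$ and $\eps_0>0$ with $\beps\geq\theta_{A_\delta}$ on $A_\delta$ for every $\eps\in(0,\eps_0]$. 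Consequently $\tphie\in X_\eps$.

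By the density of $C_c^\infty(\Weps)$ in $X_\eps$ proved in Appendix~\ref{app:limit} (Proposition~\ref{prop density}), the weak formulation of $\diver(\beps^{-1}\nabla\phie)=0$ extends to all test functions in $X_\eps$. Taking $\varphi=\tphie$ yields
\begin{equation*}
\int_{\Weps}\frac{1}{\beps}|\nabla\tphie|^2\,\dd x \;=\; -\int_{A_\delta}\frac{1}{\beps}\nabla\chid\cdot\nabla\tphie\,\dd x ,
\end{equation*}
and Cauchy--Schwarz combined with the uniform lower bound $\beps\geq\theta_{A_\delta}$ on $A_\delta$ gives
\begin{equation*}
\Big\|\tfrac{1}{\sqrt{\beps}}\nabla\tphie\Big\|_{L^2(\Weps)}^{2} \;\leq\; \theta_{A_\delta}^{-1/2}\,\|\nabla\chid\|_{L^2(A_\delta)}\,\Big\|\tfrac{1}{\sqrt{\beps}}\nabla\tphie\Big\|_{L^2(\Weps)},
\end{equation*}
so $\|\beps^{-1/2}\nabla\tphie\|_{L^2(\Weps)}\leq C$; the triangle inequality (applied to $\nabla\phie=\nabla\tphie+\nabla\chid$, together with the uniform $L^2$-bound on $\beps^{-1/2}\nabla\chid$ just obtained) then yields the desired bound $\|\beps^{-1/2}\nabla\phie\|_{L^2(\Weps)}\leq C$.

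For the $H^1$-bound, the uniform $L^\infty$-bound on $\beps$ (Assumption~\ref{ass:evanescent}~$(2)$) gives $\|\nabla\phie\|_{L^2}\leq\|\beps\|_{L^\infty}^{1/2}\|\beps^{-1/2}\nabla\phie\|_{L^2}\leq C$, and the $L^2$-bound on $\phie$ follows from Poincar\'e's inequality on a fixed ball $\Omega^0\supset\Weps$ applied to $\tphie\in H^1_0(\Weps)$, together with $\|\chid\|_{L^2}\leq C$. The main delicate point is the validity of the test-function choice $\varphi=\tphie$, which relies on the density of $C_c^\infty(\Weps)$ in $X_\eps$ under the minimal regularity required in Definition~\ref{def-generallake}; this is precisely what is established in Proposition~\ref{prop density}.
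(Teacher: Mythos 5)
Your proof is correct and follows essentially the same route as the paper: decompose $\phie=\tphie+\chid$ with $\tphie\in X_\eps$, test the harmonic equation against $\tphie$ (justified by the density of $C_c^\infty(\Weps)$ in $X_\eps$ from Proposition~\ref{prop density}), and conclude via Cauchy--Schwarz together with the uniform lower bound on $\beps$ over $\supp(\nabla\chid)\Subset\Oo$ from Assumption~\ref{ass:evanescent}~(3). The identity you derive is exactly the paper's \eqref{eq:Abel-1}, and your handling of the $H^1$-bound via the uniform $L^\infty$-bound on $\beps$ and Poincar\'e is the intended completion.
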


The proof of the uniform bound is inspired by Lemma 3.2 in \cite{LNP} where the existence of $\phi^1_{\varepsilon}$ was proved. We reproduce the main steps to show the $\eps$-independence of the bounds albeit the degeneracy in $0$.

\begin{proof}
Fix $\delta\in(0,1)$ and let $\chid\in C_c^{\infty}(\Omega)$ be as in \eqref{eq:chidelta}. There exists $\eps_0>0$ such that for $\eps\in(0,\eps_0)$, one has that $\chid\equiv 1$ in a neighbourhood of $\Ieps$.
Define
\begin{equation}\label{eq:decompositionphi}
 \phie=\tilde\phie+\chid,
\end{equation}
where $\tilde\phie\in X_{\eps}$ is the solution to
\begin{equation}\label{eq:tildephie}
 \diver\left(\frac{1}{\beps}\nabla\tilde{\phie}\right)=- \diver\left(\frac{1}{\beps}\nabla\chid\right) \text{ in } \mathcal{D}'(\Weps).
\end{equation}
Existence and uniqueness of $\tilde{\phie}$ follows from \cite[Proposition 2.1]{LNP}. Indeed, for regular $\beps\in H_{\loc}^1(\Weps)$, we notice that (\emph{2}) of Definition~\ref{def-generallake} ensures that $\beps$ is bounded from below on $\supp(\nabla\chid)$ as $\supp(\nabla\chid)\Subset \Weps$ for $\eps>0$ sufficiently small, hence $\diver(\beps^{-1}\nabla\chid)\in L^2(\Weps)$ which allows to solve directly \eqref{eq:tildephie}. For less regular $\beps\in L^\infty(\Weps)$, existence was obtained by compactness when we approximate lakes as in Definition~\ref{def-generallake} by a sequence of smooth lakes. Uniqueness is clear as long as $C_c^{\infty}(\Weps)$ is dense in $X_{\eps}$, see Proposition~\ref{prop density}. 

Exploiting again this density of $C_c^{\infty}(\Weps)$ in $X_{\eps}$, we may use $\tilde{\phie}$ as test-function in \eqref{eq:tildephie}. Hence,
\begin{equation}\label{eq:Abel-1}
\left\|\frac{1}{\sqrt{\beps}}\nabla\tilde{\phie}\right\|_{L^2(\Weps)}^2
=-\int_{\Weps}\frac{1}{\sqrt{\beps}}\nabla\tilde{\phie}\cdot\frac{1}{\sqrt{\beps}}\nabla\chid\dd x.
\end{equation}
The Cauchy-Schwarz inequality then yields
\begin{equation*}
\left\|\frac{1}{\sqrt{\beps}}\nabla\tilde{\phie}\right\|_{L^2(\Weps)}\leq \left\|\frac{1}{\sqrt{\beps}}\nabla\chid\right\|_{L^2(\Weps)}.
\end{equation*}
The bound follows from the uniform lower bound for $\beps$ in (\emph{3}) Assumption~\ref{ass:evanescent} and $\chid\in C_c^\infty(\Omega)$ with $\supp(\nabla\chid)\subset\Oo$.
\end{proof}

Next, we provide a uniform estimate for the $\beps$-harmonic functions $\psi^1_{\varepsilon}$ with constant circulation. The uniform bound will follow from the observation that the $b^{-1}$-capacity of the limiting island $\{0\}$ is positive. For a compact set $E\subset \Omega$, the weighted capacity is defined as
\begin{equation*}
 \Cap_{b^{-1}}(E,\Omega)=\inf_{\varphi\in \mathcal{B}(E)}
 \int_{\Omega}\frac{1}{b}|\nabla\varphi|^2\dd x,
\end{equation*}
where $\mathcal{B}(E)=\{\varphi\in C_c^{\infty}(\Omega) : \varphi= 1 \, \text{in a neighborhood of} \, E \}$.

\begin{lemma}\label{lem:capacity}
Let $(\Omega,b)$ a punctured lake as in Definition~\ref{ass:b}. Then, 
\begin{equation*}
 \Cap_{b^{-1}}(\{0\},\Omega)>0.
\end{equation*}
Let $(\Omega_{\varepsilon},\beps)$ a lake with one non-degenerated island as in Definition~\ref{def-generallake}. For any $\delta>0$, there exists $C_{\delta}>0$ such that the following holds: if $(\Omega_{\varepsilon},\beps)$ verifies
\[
\Omega_{\varepsilon}\subset B(0,\delta^{-1}), \ \| \beps\|_{L^\infty(\Omega_{\varepsilon})}\leq \delta^{-1}, \ \int_{\Omega_{\varepsilon}\cap B(0,\delta)} \frac{\beps(y)}{|y|^2}\dd y\leq \delta^{-1}
\]
then 
\begin{equation*}
 \Cap_{\beps^{-1}}(\mathcal{I}_{\varepsilon},\widetilde{\Omega}_{\eps})\geq C_{\delta}.
\end{equation*}
\end{lemma}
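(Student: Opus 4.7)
Both bounds stem from a single Cauchy--Schwarz argument built on the planar fundamental solution $\log|y|$ of $\Delta$. The key identity is
\begin{equation*}
\int \nabla\varphi\cdot\nabla\log|y|\,\dd y = -2\pi
\end{equation*}
whenever $\varphi$ vanishes on the outer boundary and equals $1$ in a neighborhood of an obstacle containing the origin; coupling it with Cauchy--Schwarz against the weight $b$ reduces everything to the $L^1$-integrability of $b/|y|^2$. The assumption $a_{1}\in (0,2)$ in Definition~\ref{ass:b} is precisely what ensures $b/|y|^2\in L^{1}(\Omega)$, counterbalancing the singularity of $\log|y|$ at the origin.

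\emph{First statement.} Fix $\varphi\in\mathcal{B}(\{0\})$, so $\varphi\equiv 1$ on some ball $B(0,r_{0})\Subset\Omega$ and $\nabla\varphi$ is supported in $\Omega\setminus B(0,r_{0})$. Applying Green's formula to $\log|y|$ (harmonic on this annular region) against $\varphi$, using $\varphi=0$ on $\partial\Omega$ together with $\varphi\equiv 1$ and $\partial_{n}\log|y|=-1/r_{0}$ on $\partial B(0,r_{0})$ (outward normal from the annular region pointing inward to the ball), produces the identity above. Cauchy--Schwarz then gives
\begin{equation*}
(2\pi)^2\leq \int_{\Omega}\frac{|\nabla\varphi|^2}{b}\,\dd y\cdot \int_{\Omega}\frac{b}{|y|^2}\,\dd y,
\end{equation*}
and the right-hand integral is finite since $b(y)\leq C|y|^{a_{1}}$ near $0$ forces $b/|y|^2\leq C|y|^{a_{1}-2}\in L^{1}(B(0,r_{0}))$ as $a_{1}>0$, while on the complement $b\in L^\infty(\Omega)$ and $|y|^{-2}$ is bounded. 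Taking the infimum over $\varphi$ produces the positive lower bound on $\Cap_{b^{-1}}(\{0\},\Omega)$.

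\emph{Second statement.} The same scheme is applied to $\varphi\in C_{c}^\infty(\widetilde{\Omega}_\varepsilon)$ equal to $1$ in a neighborhood of $\mathcal{I}_\varepsilon$. Using that $0\in\mathcal{I}_\varepsilon$ in the regime of interest (ensured for small $\varepsilon$ by the Hausdorff convergence $\Omega_\varepsilon\to\Omega\setminus\{0\}$), pick a smooth open set $N$ with $\mathcal{I}_\varepsilon\Subset N\Subset \{\varphi=1\}$, so that $\log|y|$ is harmonic on $\widetilde\Omega_\varepsilon\setminus N$. Green's identity there and $\Delta\log|y|=2\pi\delta_{0}$ give $\int_{\Omega_\varepsilon}\nabla\varphi\cdot\nabla\log|y|\,\dd y=-2\pi$, as $\nabla\varphi=0$ on $N\setminus\mathcal{I}_\varepsilon$. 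The weighted integral is controlled by splitting at radius $\delta$:
\begin{equation*}
\int_{\Omega_\varepsilon}\frac{b_\varepsilon}{|y|^2}\,\dd y\leq \int_{\Omega_\varepsilon\cap B(0,\delta)}\frac{b_\varepsilon}{|y|^2}\,\dd y+\frac{\|b_\varepsilon\|_{L^\infty}|\Omega_\varepsilon|}{\delta^{2}}\leq \delta^{-1}+\pi\delta^{-5},
\end{equation*}
by the three quantitative hypotheses on $\Omega_\varepsilon$, $\|b_\varepsilon\|_\infty$, and $\int_{\Omega_\varepsilon\cap B(0,\delta)}b_\varepsilon/|y|^2$. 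Cauchy--Schwarz then yields the uniform lower bound $C_\delta:=(2\pi)^2(\delta^{-1}+\pi\delta^{-5})^{-1}$.

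The main subtlety is a clean integration by parts despite the possibly rough boundary $\partial\mathcal{I}_\varepsilon$; this is sidestepped by performing Green's identity on $\widetilde{\Omega}_\varepsilon\setminus N$ with $N$ a smooth open set between $\mathcal{I}_\varepsilon$ and $\{\varphi=1\}$, thereby never touching $\partial\mathcal{I}_\varepsilon$. Everything else is bookkeeping once one notices that the degeneracy $b(y)\lesssim|y|^{a_{1}}$ with $a_{1}>0$ is exactly what compensates the $|y|^{-2}$ singularity of $|\nabla\log|y||^2$, which is what fails for the constant depth (2D Euler) case.
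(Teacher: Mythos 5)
Your proof is correct and follows essentially the same route as the paper: the key identity $\bigl|\int\nabla\varphi\cdot\tfrac{y}{|y|^2}\,\dd y\bigr|=2\pi\varphi(0)$ followed by Cauchy--Schwarz against the weight $b$ and the bound on $\int b/|y|^2$ coming from $a_1\in(0,2)$. The only (cosmetic) difference is that you derive the identity via Green's formula for $\log|y|$ on an annular region, whereas the paper quotes the gradient representation formula of \cite[Lemma 7.14]{GT}; your treatment of the second statement is in fact more explicit than the paper's one-line remark.
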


\begin{remark}\label{rem:capacity}
We notice that any sequence of lakes $(\Weps,\beps)$ with one non-degenerated island (as in Definition~\ref{def-generallake}), which converges in the sense of Assumption~\ref{ass:evanescent} to a punctured lake $(\Omega,b)$ (as in Definition~\ref{ass:b}), satisfies the assumptions of this lemma. Indeed, for $\delta$ small enough such that $\Omega\Subset B(0,\delta^{-1})$, (\emph{1}) of Assumption~\ref{ass:evanescent} implies that the inclusions hold true for $\varepsilon$ small enough. The uniform estimate of $\beps$ comes directly from (\emph{2}). The last property can be also derived from (\emph{2}) and the fact that $a_{1}>0$, see Definition~\ref{ass:b}.
\end{remark}

\begin{proof}[Proof of Lemma~\ref{lem:capacity}]
Let $\varphi\in \mathcal{B}(\{0\})$. Upon extending $\varphi$ by $0$ on $\R^2\backslash\Omega$, one has the formula
\[
 \varphi(x)=\frac{1}{2\pi}\int_{\R^2}\frac{x-y}{|x-y|^2}\cdot\nabla\varphi(y)\dd y,
\]
for all $x\in \R^2$, see \cite[Lemma 7.14]{GT}. Therefore,
\begin{align*}
4\pi^2&=4\pi^2\varphi(0)^2=\left(\int_{\text{supp}(\nabla\varphi)}\frac{y}{|y|^2}\cdot\nabla\varphi(y)\dd y\right)^2\\
&\leq \left(\int_{\Omega}\frac{1}{{b(y)}}\left|\nabla\varphi(y)\right|^2\dd y\right)\left(\int_{\Omega}\frac{b(y)}{|y|^2}\dd y\right).
\end{align*}
In view of Definition~\ref{ass:b} there exist $a_1\in(0,2)$ and $\delta>0$ such that $b(x)\leq C|x|^{a_1}$ for all $x\in B(0,\delta)$ and
\begin{equation*}
 \int_{\Omega}\frac{b(x)}{|x|^2}\dd x\leq C\int_{B(0,\delta)}|x|^{a_1-2}\dd x+\int_{\Omega\backslash B(0,\delta)}\frac{b(x)}{|x|^{2}}\dd x\leq \frac{C}{a_1} \delta^{a_1}+\frac{C(\|b\|_{L^{\infty}},|\Omega|)}{\delta^2}.
\end{equation*}
Finally,
\begin{equation*}
 \frac{4\pi^2}{C\left(\frac{\delta^{a_1}}{a_1}+\frac{1}{\delta^2}\right)}\leq \int_{\Omega}\frac{1}{b}\left|\nabla\varphi\right|^2\dd x.
\end{equation*}
Taking the infimum over all $\varphi\in \mathcal{B}(\{0\})$ completes the proof of the first statement.

Reproducing this proof with $\beps$ and $\Omega_{\varepsilon}$ is straightforward, because it is clear that the important assumption is that $\int_{\Omega_{\varepsilon}\cap B(0,\delta)} \frac{\beps(y)}{|y|^2}\dd y$ is bounded independently from $\varepsilon$.
\end{proof}

We are now in position to prove the uniform estimates for the $\beps$-harmonic functions defined in \eqref{def-psi1}.
\begin{lemma}\label{lem:bepsharmonic}
Let $(\Weps,\beps)$ be a sequence of lakes with one non-degenerated island (as in Definition~\ref{def-generallake}), which converges (in the sense of Assumption~\ref{ass:evanescent}) to a punctured lake $(\Omega,b)$ (as in Definition~\ref{ass:b}). There exists $C=C(\Omega,b )>0$ such that for all $\eps>0$, the unique solution $\pun$ to \eqref{def-psi1} satisfies 
\[
\|\pun\|_{H^1(\Omega_{\varepsilon})}+ \Big\|\frac{1}{\sqrt{\beps}}\nabla\pun\Big\|_{L^2(\Weps)}\leq C .
\]
\end{lemma}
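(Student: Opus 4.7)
The plan is to reduce the uniform estimate on $\pun$ to the uniform bound on $\phie$ already obtained in Lemma~\ref{lem:uniformphieps} together with the uniform lower bound on the weighted capacity of the island provided by Lemma~\ref{lem:capacity} and Remark~\ref{rem:capacity}. The key observation is that, up to a multiplicative constant, $\pun$ and $\phie$ coincide.

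Indeed, both functions belong to the one-dimensional vector space $\mathcal{H}_{\beps}(\Weps)$ described in \cite[Prop. 2.5]{LNP}. Since $\pun\in H^1_0(\Tweps)$ vanishes on $\partial\Tweps$ and $\partial_\tau \pun=0$ on $\partial\Ieps$, there exists a constant $c_\eps\in \R$ such that $\pun\equiv c_\eps$ on $\partial\Ieps$, and the uniqueness of $\beps$-harmonic functions with prescribed boundary values (see Proposition~\ref{prop:BS} and the density result of Proposition~\ref{prop density}) then forces
\begin{equation*}
\pun = c_\eps \, \phie.
\end{equation*}
The normalization $\gamma\bigl(\tfrac{1}{\beps}\nabla^\perp \pun\bigr)=1$ becomes $c_\eps \, \gamma\bigl(\tfrac{1}{\beps}\nabla^\perp \phie\bigr)=1$, so the whole task reduces to producing a uniform positive lower bound on $\bigl|\gamma\bigl(\tfrac{1}{\beps}\nabla^\perp \phie\bigr)\bigr|$.

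The second step is to identify this circulation with the weighted capacity of $\Ieps$. Using formula \eqref{eq:gencirc} together with $\curl\bigl(\tfrac{1}{\beps}\nabla^\perp \phie\bigr)=\diver\bigl(\tfrac{1}{\beps}\nabla \phie\bigr)=0$ in $\Weps$ and $\nabla^\perp\chid\cdot\nabla^\perp\phie=\nabla\chid\cdot\nabla\phie$, one gets
\begin{equation*}
\gamma\bigl(\tfrac{1}{\beps}\nabla^\perp \phie\bigr)=-\int_{\Weps}\tfrac{1}{\beps}\nabla\chid\cdot\nabla\phie\,\dd x.
\end{equation*}
For $\eps$ sufficiently small, $\chid\equiv 1$ near $\Ieps$ and $\chid\equiv 0$ near $\partial\Tweps$, so $\phie-\chid\in X_\eps$. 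Testing the $\beps$-harmonic equation for $\phie$ against $\phie-\chid$ (which is legitimate by the density of $C_c^\infty(\Weps)$ in $X_\eps$, Proposition~\ref{prop density}) yields
\begin{equation*}
\int_{\Weps}\tfrac{1}{\beps}\nabla\chid\cdot\nabla\phie\,\dd x=\int_{\Weps}\tfrac{1}{\beps}|\nabla\phie|^2\,\dd x.
\end{equation*}
A standard variational argument (testing against $\phie-\varphi$ for arbitrary $\varphi\in\mathcal{B}(\Ieps)$) further identifies this last integral with $\Cap_{\beps^{-1}}(\Ieps,\Tweps)$. Hence
\begin{equation*}
c_\eps=\frac{1}{\gamma\bigl(\tfrac{1}{\beps}\nabla^\perp\phie\bigr)}=-\frac{1}{\Cap_{\beps^{-1}}(\Ieps,\Tweps)}.
\end{equation*}

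The last step invokes Lemma~\ref{lem:capacity} combined with Remark~\ref{rem:capacity}: under Assumption~\ref{ass:evanescent}, the hypotheses of that lemma are satisfied uniformly, so there exists $C_\delta>0$ independent of $\eps$ with $\Cap_{\beps^{-1}}(\Ieps,\Tweps)\geq C_\delta$, and therefore $|c_\eps|\leq C_\delta^{-1}$. Combining the identity $\|\tfrac{1}{\sqrt{\beps}}\nabla\pun\|_{L^2(\Weps)}^2=c_\eps^2 \,\Cap_{\beps^{-1}}(\Ieps,\Tweps)=|c_\eps|$ with the bound on $|c_\eps|$ gives the desired control on $\tfrac{1}{\sqrt{\beps}}\nabla\pun$, while the $H^1$ estimate follows from $\|\pun\|_{H^1(\Omega_\eps)}\leq |c_\eps|\,\|\phie\|_{H^1(\Omega_\eps)}$ together with Lemma~\ref{lem:uniformphieps}. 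The substantive ingredient is the uniform lower bound on the weighted capacity; everything else is a direct algebraic reduction.
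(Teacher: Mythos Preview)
Your proof is correct and follows essentially the same route as the paper: write $\pun=c_\eps\phie$, compute the circulation of $\tfrac{1}{\beps}\nabla^\perp\phie$ as $-\int_{\Weps}\tfrac{1}{\beps}|\nabla\phie|^2\,\dd x$, and bound this Dirichlet integral from below by the weighted capacity via Lemma~\ref{lem:capacity}. The only minor variation is that you identify $\int_{\Weps}\tfrac{1}{\beps}|\nabla\phie|^2\,\dd x$ \emph{exactly} with $\Cap_{\beps^{-1}}(\Ieps,\Tweps)$ (which is true by the variational characterization you indicate) and then use the clean identity $\|\tfrac{1}{\sqrt{\beps}}\nabla\pun\|_{L^2}^2=|c_\eps|$, whereas the paper only uses the inequality $\int_{\Weps}\tfrac{1}{\beps}|\nabla\phie|^2\,\dd x\geq\Cap_{\beps^{-1}}(\Ieps,\Tweps)$ and then bounds $\|\tfrac{1}{\sqrt{\beps}}\nabla\pun\|_{L^2}\leq |c_\eps|\,\|\tfrac{1}{\sqrt{\beps}}\nabla\phie\|_{L^2}$ via Lemma~\ref{lem:uniformphieps}.
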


\begin{proof}
We recall that due to the vector space structure of $\mathcal{H}_{\beps}(\Weps)$, there exists $\{a_{\eps}\}_{\eps>0}\subset \R$ such that 
\begin{equation*}
 \pun=a_{\eps}\phie.
\end{equation*}
In view of Lemma~\ref{lem:uniformphieps} it suffices to show that $a_{\eps}$ is bounded. To that end, we notice that the generalized circulation $\gamma$, defined in \eqref{eq:gencirc} satisfies
\begin{equation}\label{eq:circulation-aeps}
\begin{aligned}
 1=\gamma\left(\frac{1}{\beps}\nabla^{\perp}\pun\right)
 &=-a_\eps\int_{\Weps}\frac{1}{\beps}\nabla\chid\cdot\nabla\phie\dd x\\
 &=-a_\eps\int_{\Weps}\frac{1}{\beps}\left|\nabla\phie\right|^2\dd x,
 \end{aligned}
\end{equation}
where we have used that $\chid =\phie-\Tilde{\phie}$, see \eqref{eq:decompositionphi}, and the fact that $C_c^{\infty}(\Weps)$ is dense in $X_{\eps}$ allows us to consider the equation $\diver(\beps^{-1}\nabla\phie)=0$ tested against $\tphie\in X_{\eps}$.
In virtue of \eqref{eq:circulation-aeps}, Lemma~\ref{lem:uniformphieps} can be interpreted as a lower bound of $-a_\eps$. Next, we seek an upper bound. For that purpose it suffices to show that there exists $c>0$ such that 
\begin{equation}\label{eq:lower bound}
 \inf_{\eps>0}\int_{\Weps}\frac{1}{\beps}\left|\nabla\phie\right|^2\dd x\geq c>0,
\end{equation}
and the bound follows from \eqref{eq:circulation-aeps}. As $\tphie\in X_{\eps}$ there exists a sequence $\varphi_\eps^n\in C_c^{\infty}(\Weps)$ such that $\varphi_\eps^n$ converges strongly to $\tphie$ in $X_{\eps}$. It follows that $\phi_{\eps}^{1,n}:=\varphi_\eps^n+\chid$ converges strongly to $\phie$ in $X_\varepsilon$ and that 
\[
\int_{\Weps}\frac{1}{\beps}\left|\nabla\phie\right|^2\dd x = \lim_{n\rightarrow\infty}\int_{\Weps}\frac{1}{\beps}\left|\nabla\phi_{\eps}^{1,n}\right|^2\dd x.
\]
Moreover, $\phi_{\eps}^{1,n}=1$ in a neighborhood of $\mathcal{I}_{\varepsilon}$ because $\chid=1$ on $B(0,\delta/2)$ and $\varphi_\eps^n=0$. The bound then follows directly from Lemma~\ref{lem:capacity} and Remark~\ref{rem:capacity}:
\begin{equation*}
\int_{\Weps}\frac{1}{\beps}\left|\nabla\phie\right|^2\dd x\geq \Cap_{\beps^{-1}}(\mathcal{I}_{\varepsilon},\widetilde{\Omega}_{\eps}) \geq c>0.
\end{equation*}
Therefore, we obtain from \eqref{eq:lower bound} and \eqref{eq:circulation-aeps} that $-a_{\eps}$ is uniformly bounded from above, as desired. 
\end{proof}

We are now ready to combine the previous lemmas with the conservation of the circulation and estimates on the vorticity \eqref{bd:vorticity} in the decomposition \eqref{eq:BS} in order to infer uniform bounds on $\sqrt{\beps}\veps$ for emergent island.

\begin{lemma}\label{lem:veps2}
Let $(\Weps,\beps)$ be a sequence of lakes with one non-degenerated island (as in Definition~\ref{def-generallake}), which converges (in the sense of Assumption~\ref{ass:evanescent}) to a punctured lake $(\Omega,b)$ (as in Definition~\ref{ass:b}). Given $\gamma\in \R$ and $\omega^0\in L^{\infty}(\Omega)$, let $(\veps,\weps)$ be a global weak solution given by Theorem~\ref{thm:existencenonsmooth} with initial vorticity $\omega^0$ and circulation $\gamma \in \R$. There exists $C=C(\Omega,b )>0$ such that for all $\eps>0$
\begin{equation*}
 \left\|\sqrt{\beps}\veps\right\|_{L^{\infty}(\R_{+};L^2(\Weps))}\leq C\left(|\gamma| +\|\omega^0\|_{L^{\infty}(\R_+\times \Weps)}\right).
\end{equation*}
\end{lemma}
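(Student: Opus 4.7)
The plan is to combine the Hodge decomposition \eqref{eq:BS} of Proposition~\ref{prop:BS} with the three uniform estimates already established (Lemma~\ref{lem:elliptic}, Lemma~\ref{lem:uniformphieps}, Lemma~\ref{lem:bepsharmonic}). Since the solutions produced by Theorem~\ref{thm:existencenonsmooth} satisfy $\curl \veps = \beps \weps$ and preserve the circulation in time, we have $\gamma(\veps(t,\cdot))=\gamma$ for all $t\geq 0$, and applying Proposition~\ref{prop:BS} with $f_\eps = \beps \weps(t,\cdot)\in L^2(\Weps)$ (which lies in $L^2$ thanks to \eqref{bd:vorticity} and the uniform bound on $\beps$) gives
\[
\sqrt{\beps}\veps(t,\cdot) = \frac{1}{\sqrt{\beps}}\nabla^{\perp}\psi_\eps^0(t,\cdot) + \alpha_\eps(t) \frac{1}{\sqrt{\beps}}\nabla^{\perp}\pun,
\]
with $\alpha_\eps(t)=\gamma+\int_{\Weps}\beps\weps(t,\cdot)\phie\,\dd x$.

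First, I would estimate the two $1/\sqrt{\beps}\,\nabla\cdot$ terms in $L^2(\Weps)$: Lemma~\ref{lem:elliptic} handles $\psi_\eps^0$ directly and yields a bound of the form $C\|\omega^0\|_{L^\infty(\Omega)}$, while Lemma~\ref{lem:bepsharmonic} provides the $\eps$-uniform bound on $1/\sqrt{\beps}\,\nabla\pun$ by a pure constant $C=C(\Omega,b)$. So the only quantity that still depends on the data is $\alpha_\eps(t)$.

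Second, I would bound $\alpha_\eps(t)$. Using the $L^\infty$-estimate \eqref{bd:vorticity} together with the uniform bound $\|\beps\|_{L^\infty(\Weps)}\leq C$ from Assumption~\ref{ass:evanescent}(\emph{2}), together with $|\Weps|\leq |\Omega|$, the Cauchy--Schwarz inequality gives
\[
\Big| \int_{\Weps} \beps \weps(t,\cdot) \phie\,\dd x \Big| \leq \|\beps\|_{L^\infty}\|\omega^0\|_{L^\infty(\Omega)} |\Omega|^{1/2}\|\phie\|_{L^2(\Weps)}.
\]
By Lemma~\ref{lem:uniformphieps}, $\|\phie\|_{H^1(\Omega_\eps)}$ is bounded uniformly in $\eps$, hence so is $\|\phie\|_{L^2(\Weps)}$ (via the Poincaré inequality on a fixed ball $B(0,R_0)\supset \Weps$, exactly as in the proof of Lemma~\ref{lem:elliptic}). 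Therefore $|\alpha_\eps(t)|\leq |\gamma|+C\|\omega^0\|_{L^\infty(\Omega)}$ uniformly in $t$ and $\eps$.

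Combining the two steps through the triangle inequality yields the announced bound on $\sqrt{\beps}\veps$ in $L^\infty(\R_+;L^2(\Weps))$. There is essentially no obstacle here: all the analytical difficulty — in particular the crucial fact that the degeneracy of $b$ at the origin does not blow up the $\beps$-harmonic part — has been absorbed into Lemmas~\ref{lem:uniformphieps}, \ref{lem:capacity} and \ref{lem:bepsharmonic}. The only minor point to verify is that the constant from Lemma~\ref{lem:uniformphieps} can be used to bound $\|\phie\|_{L^2}$ uniformly, which is immediate since $\phie\in H^1_0(\widetilde{\Omega}_\eps)$ and $\widetilde{\Omega}_\eps\subset B(0,R_0)$ for some fixed $R_0$ by Assumption~\ref{ass:evanescent}(\emph{1}).
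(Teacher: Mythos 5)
Your proposal is correct and follows essentially the same route as the paper: the Hodge decomposition \eqref{eq:BS}, the bound on $\sqrt{\beps}^{-1}\nabla\psi_\eps^0$ from Lemma~\ref{lem:elliptic}, the bound on $\sqrt{\beps}^{-1}\nabla\pun$ from Lemma~\ref{lem:bepsharmonic}, and the uniform control of $\alphae(t)$ via \eqref{bd:vorticity} and the $L^2$-bound on $\phie$ from Lemma~\ref{lem:uniformphieps}. The only difference is that you spell out the Cauchy--Schwarz step and the Poincar\'e argument for $\|\phie\|_{L^2(\Weps)}$, which the paper leaves implicit in its constant $C_R$.
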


Here, we notice that this estimate is not sufficient to infer uniform $L^p$ estimates on the velocity field $\veps$ (for $p>1$) as this would require uniform estimates on $\sqrt{\beps}^{-1}$ in $L^q(\Weps)$ with $q>2$ that we lack unless $a_0\in[0,1)$, see Definition~\ref{ass:b}.

\begin{proof}
The Hodge decomposition \eqref{eq:BS} allows one to write
\begin{equation*}
\sqrt{\beps} \veps(t,\cdot)=\frac{1}{\sqrt{\beps} }\nabla^{\perp}\psi_{\eps}^0(t,\cdot)+\alphae(t)\frac{1}{\sqrt{\beps} }\nabla^{\perp}\pun,
\end{equation*}
where the first right hand side term is estimated in Lemma~\ref{lem:elliptic}. In view of Lemma~\ref{lem:bepsharmonic}, it is suffices to prove that $\alphae$ is uniformly bounded. We observe that
\begin{equation*}
 \|\alphae(t)\|_{L^{\infty}(\R_+)}\leq |\gamma|+ C_R\|\beps\|_{L^{\infty}(\Weps)}\|\weps\|_{L^{\infty}(\R_+\times\Weps)}\|\phi_{\eps}^1\|_{L^2(\Weps)},
\end{equation*}
being bounded in virtue of \eqref{bd:vorticity} and Lemma~\ref{lem:uniformphieps}. This completes the proof of Lemma~\ref{lem:veps2}.
\end{proof}

\begin{remark}\label{rem:genass}
The degeneracy of $b$ in $0$ is characterised by $\Cap_{b^{-1}}(\{0\})>0$. If $\Cap_{b^{-1}}(\{0\})=0$, the analysis is expected to be similar to the small obstacle problem for flat topographies \cite{ILL1}. On the other hand, for simplicity and because it is enough for the application that we have in mind, we assume $\beps\leq Cb$ and $\Omega_{\varepsilon}\subset \Omega$ in Assumption~\ref{ass:evanescent}. One may want to generalize the assumptions on the shrinking process.

We note that all the arguments in this section hold true if we replace (\emph{1})-(\emph{2}) of Assumption~\ref{ass:evanescent} by 
\begin{enumerate}
\item[\emph{(1)}] ${\Omega}_{\varepsilon}\to \Omega\backslash\{0\}$ in the Hausdorff sense;
 \item[\emph{(2i)}] $\beps\in L^\infty(\Omega_{\varepsilon})$ uniformly bounded;
 \item[\emph{(2ii)}] there exists $\delta >0$ such that $\displaystyle \int_{\Omega_{\varepsilon}\cap B(0,\delta)} \frac{\beps(y)}{|y|^2}\dd y\leq \delta^{-1}$.
\end{enumerate}
In particular, Lemma~\ref{lem:capacity} gives a uniform estimate of the weighted capacity of the island, which is the main ingredient of our analysis.

In the following section, we will use $\beps\leq Cb$ and $\Omega_{\varepsilon}\subset \Omega$ to compare the $X_{\varepsilon}$ and $X$ norms for all $\varphi$ supported in $\Omega_{\varepsilon}$:
\[
\| \varphi \|_{X} = \int_{\Omega} \frac{|\nabla \varphi|^2}b = \int_{\Omega_{\varepsilon}} \frac{|\nabla \varphi|^2}b \leq C \| \varphi \|_{X_{\varepsilon}} .
\]
Such an inequality will give elegant proofs for the compactness. Nevertheless we think that the following analysis could be adapted for the weaker assumptions listed in this remark. For instance, we do not use such an inequality in Section~\ref{sec:emergent}.

We mention several possible generalizations of our theorems. First, it was noted in Remark~\ref{rem:genass2} that the density of $C_c^{\infty}$ in $X_\eps$ is not mandatory to state the existence of a Hodge decomposition. This density was also used in this section to infer that some equations in the sense of distributions are also true tested by some particular functions belonging to $X_{\varepsilon}$, such as $\tilde\phi^1_{\varepsilon}$. However, such a property could be also obtained by compactness in the construction made in \cite{LNP}. Hence, one may remove (\emph{3})-(\emph{4}) in Definition~\ref{def-generallake}. Second, it will be noticed later that we could allow $a_{1}\geq 2$ in Definition~\ref{ass:b} if we are only interested by the limit formulation in $\Oo$. In particular, we have not used $a_{1}<2$ in Section~\ref{sec:main}. At the opposite, assuming that (\emph{2ii}) above is not verified entails the loss of the uniform lower bound $\Cap_{\beps^{-1}}(\Ieps,\Weps)\geq c>0$. By consequence, $a_{\varepsilon}\to -\infty$ in the proof of Lemma~\ref{lem:bepsharmonic}. To prove a local estimate of $v_{\varepsilon}$ in $\Oo$ would require to prove that $\phi^1_{\varepsilon}$ converges locally to zero. At least in the radial case, see Example~\ref{ex:1}, such a convergence may not be expected. An example of lakes satisfying the relaxed assumptions but not (2ii) is given by
\begin{equation*}
 \Weps=B(0,1)\setminus B(0,\eps), \quad \beps:=\begin{cases}
 0 \quad x\in B(0,\eps),\\ 1 \quad x\in B(0,\sqrt{\eps})\setminus\overline{B(0,\eps)}, \\ |x|^{a_1} \quad x\in \Omega\setminus\overline{B(0,\sqrt{\eps})}.
 \end{cases}
\end{equation*}
\end{remark}

\section{Compactness for the evanescent island}\label{sec:evanescent}
The aim of this section is to prove Theorem~\ref{thm:maingeneral}. To that end, we first develop an $L^2$-based stability theory for the sequence $\sqrt{\beps}\veps$ by exploiting the Hodge decomposition \eqref{eq:BS}.

\subsection{Compactness for stream functions}
We prove stability for the degenerate elliptic equation \eqref{eq:elliptic}, where we need to deal simultaneously with the singularity of the geometry and the topography. 

We recall that $\Weps\subset \Omega$ for all $\eps>0$, see Assumption~\ref{ass:evanescent} and notice that 
\begin{equation}\label{eq:beps}
 \beps\rightarrow b \quad \text{in} \quad L_{\loc}^p(\Omega),
\end{equation}
for all $p\in[1,\infty)$ from Assumption~\ref{ass:evanescent} upon extending $\beps$ by zero to $\Omega$ and interpolating with the uniform $L^{\infty}$-bounds for $\beps$ and $b$. In addition, let $K\subset \Oo$. As $\Weps$ converges to $\Oo$ in Hausdorff sense, there exists $\eps_0>0$ such that for all $\eps\in(0,\eps_0]$ one has $K\subset \Weps$. Therefore,
\begin{equation}\label{eq:sqrtbeps}
 \left\|\frac{1}{\sqrt{\beps}}-\frac{1}{\sqrt{b}}\right\|_{L_{\loc}^p(\Oo)}= \left\|\frac{\beps-b}{\sqrt{\beps}\sqrt{b}(\sqrt{\beps}+\sqrt{b})}\right\|_{L_{\loc}^p(\Oo)}\rightarrow 0,
\end{equation}
for all $p\in[1,\infty)$.

In the following, $\frac{1}{\sqrt{\beps}}\nabla\tilde{\phie}$ and $\tilde{\phie}$ are extended by zero to $\Omega$. 

\begin{lemma}\label{lem:convergence-phie}
Let $(\Weps,\beps)$ be a sequence of lakes with one non-degenerated island (as in Definition~\ref{def-generallake}), which converges (in the sense of Assumption~\ref{ass:evanescent}) to a punctured lake $(\Omega,b)$ (as in Definition~\ref{ass:b}). Let $\phie$ be the unique $\beps$-harmonic function solution to \eqref{def-phi1}. Then, up to a subsequence
\[
 \frac{1}{\sqrt{\beps}}\nabla\phie\rightarrow \frac{1}{\sqrt{b}}\nabla\phi^1 \qquad \text{strongly in } \quad L^2(\Omega),
\]
where $\phi^1\in X_b(\Omega)$ is such that $\diver(b^{-1}\nabla\phi^1)=0$ in $\mathcal{D}'(\Oo)$.
\end{lemma}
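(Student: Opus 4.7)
The plan is to combine the uniform bound of Lemma~\ref{lem:uniformphieps} with the decomposition $\phie=\tphie+\chid$ in order to obtain weak compactness, identify the limit equation on the punctured lake, and finally upgrade weak convergence to strong convergence by means of an energy identity.

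First, I would extract weakly convergent subsequences. By Lemma~\ref{lem:uniformphieps}, $(\phie)$ is uniformly bounded in $H^1$ (after extending by the constants $0$ on $\Omega\setminus\Tweps$ and $1$ on $\Ieps$) and $(\beps^{-1/2}\nabla\phie)$ is uniformly bounded in $L^2(\Omega)$ (extended by zero). Up to a subsequence, $\phie\rightharpoonup\phi^1$ weakly in $H^1$ and $\beps^{-1/2}\nabla\phie\rightharpoonup G$ weakly in $L^2(\Omega)$. For any compact $K\Subset\Oo$, Assumption~\ref{ass:evanescent}(3) provides a uniform lower bound of $\beps$ on $K$, so (\ref{eq:sqrtbeps}) yields $\beps^{-1/2}\to b^{-1/2}$ strongly in $L^\infty(K)$. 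Multiplying by the weak $L^2$ limit $\nabla\phi^1$ identifies $G=b^{-1/2}\nabla\phi^1$ a.e. on $\Oo$, hence on $\Omega$, which places $\phi^1\in X_b(\Omega)$.

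Second, I would pass to the limit in the weak formulation of $\beps$-harmonicity. For $\xi\in C_c^\infty(\Oo)$, there is $\eps_0>0$ such that $\supp\xi\subset\Weps$ for $\eps\in(0,\eps_0)$, hence $\int_\Omega \beps^{-1}\nabla\phie\cdot\nabla\xi\,dx=0$. Writing $\beps^{-1}\nabla\phie\cdot\nabla\xi=(\beps^{-1/2}\nabla\phie)\cdot(\beps^{-1/2}\nabla\xi)$ and combining the weak $L^2$ convergence of the first factor with the strong $L^2$ convergence of the second (since $\nabla\xi$ is compactly supported in $\Oo$), I obtain $\int_\Omega b^{-1}\nabla\phi^1\cdot\nabla\xi\,dx=0$, i.e., $\diver(b^{-1}\nabla\phi^1)=0$ in $\mathcal{D}'(\Oo)$.

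Third, for strong convergence, I would use the energy identity of the Hodge-type framework. Since $\tphie=\phie-\chid\in X_\eps$ and $C_c^\infty(\Weps)$ is dense in $X_\eps$ (Proposition~\ref{prop density}), testing $\diver(\beps^{-1}\nabla\phie)=0$ with $\tphie$ gives
\begin{equation*}
\int_\Omega \beps^{-1}|\nabla\phie|^2\,dx=\int_\Omega \beps^{-1}\nabla\phie\cdot\nabla\chid\,dx.
\end{equation*}
The right-hand side passes to the limit $\int_\Omega b^{-1}\nabla\phi^1\cdot\nabla\chid\,dx$ thanks to the argument of Step~2, $\nabla\chid$ being compactly supported in $\Oo$. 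It remains to show that this quantity equals $\|b^{-1/2}\nabla\phi^1\|_{L^2(\Omega)}^2$, which is the analogous identity for the limit problem. Norm convergence and weak convergence then give strong convergence in $L^2(\Omega)$.

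The main obstacle is exactly this last identity. It amounts to testing $\diver(b^{-1}\nabla\phi^1)=0$ with $\phi^1-\chid\in X_b(\Omega)$, which is problematic because Lemma~\ref{lem:capacity} asserts that the $b^{-1}$-capacity of $\{0\}$ is positive and hence $C_c^\infty(\Oo)$ is \emph{not} dense in $X_b(\Omega)$. However, since each $\tphie$ vanishes on the shrinking island $\Ieps$, the weak limit $\phi^1-\chid$ should vanish at the origin in the weighted capacitary sense, placing it in the $X_b$-closure of $C_c^\infty(\Oo)$. Making this rigorous — presumably via the refined density result of Appendix~\ref{app:limit} combined with a cutoff argument against $\chid$ with $\delta\to 0$ — is the delicate point; once established, the energy identity in the limit closes the argument.
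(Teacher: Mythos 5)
Your Steps 1 and 2 follow the paper's proof: decompose $\phie=\tphie+\chid$, extract weak limits, identify $G=b^{-1/2}\nabla\phi^1$ by a weak--strong pairing against test functions supported in $\Oo$, and pass to the limit in the equation in $\mathcal{D}'(\Oo)$. (One small overstatement: Assumption~\ref{ass:evanescent} only gives $\beps^{-1/2}\to b^{-1/2}$ in $L^p_{\loc}(\Oo)$ for finite $p$, not in $L^\infty(K)$; but the pairing argument only needs strong $L^2_{\loc}$ convergence of $\beps^{-1/2}\varphi$, so this is harmless.)

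The genuine gap is the one you name yourself at the end of Step 3: you need the limit energy identity $\int_\Omega b^{-1}|\nabla\phi^1|^2\,\dd x=\int_\Omega b^{-1}\nabla\phi^1\cdot\nabla\chid\,\dd x$, which amounts to testing an equation that holds only in $\mathcal{D}'(\Oo)$ against $\phi^1-\chid\in X_b(\Omega)$, and $C_c^\infty(\Oo)$ is \emph{not} dense in $X_b(\Omega)$ precisely because $\Cap_{b^{-1}}(\{0\})>0$. Your proposed fix (show that $\phi^1-\chid$ lies in the $X_b$-closure of $C_c^\infty(\Oo)$, via a cutoff with $\delta\to0$) is left as a sketch and is not what the paper does; it would require a separate capacitary argument that weak $H^1$ limits of functions vanishing on the shrinking islands vanish at $0$ quasi-everywhere for the weight $b^{-1}$. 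The paper sidesteps this entirely by a two-step device: it first tests the \emph{limit} equation against the $\eps$-dependent function $\tphie$. This is legitimate because $\tphie$ is approximable in $X_\eps$ by $\varphi_\eps^n\in C_c^\infty(\Weps)\subset C_c^\infty(\Oo)$ (Proposition~\ref{prop density}), and $\|\cdot\|_{X_b}\le C\|\cdot\|_{X_\eps}$ on functions supported in $\Weps\subset\Omega$ by Assumption~\ref{ass:evanescent}\,(2), so the approximation also holds in $X_b$ and one gets \eqref{eq:tildephi1}, namely $\int_\Omega b^{-1}\nabla\tilde{\phi}^1\cdot\nabla\tphie\,\dd x=-\int_\Omega b^{-1}\nabla\chid\cdot\nabla\tphie\,\dd x$ for each $\eps$. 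Then one lets $\eps\to0$ in this identity: $\sqrt{b}^{-1}\nabla\tphie$ is uniformly bounded in $L^2(\Omega)$ (again by $\beps\le Cb$) and converges weakly to $\sqrt{b}^{-1}\nabla\tilde{\phi}^1$, which yields \eqref{eq:bharmonic} and hence, combined with \eqref{eq:Abel-1}, the desired norm convergence. If you adopt this route your argument closes; as written, the key identity is asserted but not proved.
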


\begin{proof}
We recall from Lemma~\ref{lem:uniformphieps} that $\phie$ is the unique $\beps$-harmonic function on $\Weps$ such that $\phie=1$ on $\partial \mathcal{I}_{\eps}$, in the sense that $\phie$ satisfies the decomposition
\begin{equation*}
 \phie=\Tilde{\phie}+\chi_{\delta},
\end{equation*}
with $\chid$ as in \eqref{eq:chidelta} and $\Tilde{\phie}\in X_{\eps}$ solution to \eqref{eq:tildephie}. Upon extending $\tilde{\phie}$ by zero to $\Omega$, we conclude from Lemma~\ref{lem:uniformphieps} that $\tilde{\phie}\in H^1_{0}(\Omega)$ uniformly bounded. Therefore, there exists $\tilde{\phi^1}\in H^1_0(\Omega)$ such that $\tilde{\phie}\rightharpoonup \tilde{\phi^1}$ in $H^1_0(\Omega)$ up to passing to subsequences. Here, we have used in a crucial way that $\Omega_{\varepsilon}\subset \Omega$ to infer the Dirichlet boundary condition for $\tilde{\phi^1}$ on $\partial\Omega$. In the next section, we will need the notion of $\gamma$ and Mosco's convergences.

Let $f_{\eps}:={\sqrt{\beps}^{-1}}\nabla\tilde{\phie}$ and extend $f_\eps$ by zero to $\Omega$. Since $f_\eps\in L^2(\Omega)$ uniformly bounded from Lemma~\ref{lem:uniformphieps}, there exists $f\in L^2(\Omega)$ such that $f_{\eps}\rightharpoonup f$ in $L^2(\Omega)$ up to subsequences. We wish to identify the weak limit as $f={\sqrt{b}^{-1}}\nabla\tilde{\phi}^1$ a.e. on $\Omega$. To that end, let $\varphi\in C_c^{\infty}(\Oo)$. The Hausdorff-convergence of $\Weps$ to $\Oo$ in Hausdorff sense yields that $\supp(\varphi)\subset\Weps$ for $\eps$ sufficiently small. One has
\begin{equation*}
 \left\langle \frac{1}{\sqrt{\beps}}\nabla\tilde\phie, \varphi\right\rangle\rightarrow \left\langle\frac{1}{\sqrt{b}}\nabla\tilde{\phi}^1,\varphi \right\rangle,
\end{equation*}
since $\sqrt{\beps}^{-1}$ converges strongly to $\sqrt{b}^{-1}$ in $L_{\loc}^q(\Oo)$ for some $q\geq 2$, see \eqref{eq:sqrtbeps} and $\nabla\Tilde{\phie}$ converges weakly in $L^2(\Omega)$. Since $f$ and $\sqrt{b}^{-1}\nabla\tilde{\phi}^1$ are functions belonging to $L^2(\Omega)$, we conclude that they coincide a.e. on $\Omega$. By uniqueness of this limit, we do not need to extract a subsequence in the weak limit $f_{\eps}\rightharpoonup f$. Such an identification procedure will be used several times in the sequel.

Finally, $\tilde{\phi}^1\in X_b(\Omega)$. In addition, we conclude that
\begin{equation*}
\frac{1}{\sqrt{\beps}}\nabla\phie\rightharpoonup \frac{1}{\sqrt{b}}\nabla\phi^1:=\frac{1}{\sqrt{b}}\nabla(\tilde{\phi}^1+\chid) \quad \text{in} \quad L^2(\Omega).
\end{equation*}
Next, we pass to the limit in the equation verified by $\phie$ (see Definition~\ref{defi:bepsharmonic}). As $\phie$ is $\beps$-harmonic in $\Weps$ for $\eps>0$, and $\Weps$ converges to $\Oo$ in Hausdorff sense, one has
\begin{equation*}
 0=\left\langle\diver\left(\frac{1}{\beps}\nabla\phie\right),\varphi\right\rangle,
\end{equation*}
for all $\varphi\in C_c^{\infty}(\Oo)$ and $\eps$ sufficiently small.
We conclude that 
\begin{multline*}
 0=\left\langle\diver\left(\frac{1}{\beps}\nabla\phie\right),\varphi\right\rangle=-\int_{\Omega}\frac{1}{\sqrt{\beps}}\nabla\phie\cdot\frac{1}{\sqrt{\beps}}\nabla\varphi \dd x\\
 \rightarrow -\int_{\Omega}\frac{1}{\sqrt{b}}\nabla\phi^1\cdot\frac{1}{\sqrt{b}}\nabla\varphi \dd x
 =\left\langle\diver\left(\frac{1}{b}\nabla\phi^1\right),\varphi\right\rangle,\notag
\end{multline*}
for any $\varphi\in C_c^{\infty}(\Oo)$ as ${\sqrt{\beps}^{-1}}{\sqrt{\beps}^{-1}}\nabla{\phie}$ forms a weak-strong pair converging weakly in $L_{\loc}^1(\Oo)$ in view of \eqref{eq:sqrtbeps}. It follows that
\begin{equation*}
\int_{\Omega}\frac{1}{b}\nabla\tilde{\phi}^1\cdot\nabla\varphi\dd x=-\int_{\Omega}\frac{1}{b}\nabla\chid\cdot\nabla\varphi \dd x
\end{equation*}
for all $\varphi\in C_c^{\infty}(\Oo)$. In particular,
\begin{equation}\label{eq:tildephi1}
\int_{\Omega}\frac{1}{b}\nabla\tilde{\phi}^1\cdot\nabla\tphie\dd x=-\int_{\Omega}\frac{1}{b}\nabla\chid\cdot\nabla\tphie \dd x.
\end{equation}
Indeed, it follows from Proposition~\ref{prop density} that there exists $\varphi_\eps^n\in C_c^{\infty}(\Weps)$ such that $\varphi_\eps^n$ converges to $\tphie$ in $X_{\eps}$ as $n\rightarrow \infty$. Since $\Weps\subset \Omega$ for all $\eps>0$ from (\emph{1}) Assumption~\ref{ass:evanescent}, one has $\varphi_\eps^n\in C_c^{\infty}(\Omega\setminus \{0\})$ and
\begin{equation*}
\limsup_{n\rightarrow \infty}\left\|\frac{1}{\sqrt{b}}(\nabla\varphi_{\eps}^n-\nabla\tphie)\right\|_{L^2(\Omega)}\leq \limsup_{n\rightarrow \infty}C \left\|\frac{1}{\sqrt{\beps}}(\nabla\varphi_{\eps}^n-\nabla\tphie)\right\|_{L^2(\Weps)}=0,
\end{equation*}
where we used (\emph{2}) Assumption~\ref{ass:evanescent}. 

As $\|\sqrt b^{-1}\nabla\tphie \|_{L^2(\Omega)}\leq C\|\sqrt \beps^{-1}\nabla\tphie \|_{L^2(\Omega_{\varepsilon})} \leq C$, we have that $\sqrt b^{-1}\nabla\tphie$ converges weakly to $\sqrt b^{-1}\nabla \tilde{\phi}^1$ in $L^2(\Omega)$ and we may now pass to the limit in \eqref{eq:tildephi1} yielding that
\begin{equation}\label{eq:bharmonic}
 \int_{\Omega}\frac{1}{b}\left|\nabla\Tilde{\phi}^1\right|^2\dd x=-\int_{\Omega}\frac{1}{b}\nabla\chid\cdot\nabla\Tilde{\phi}^1\dd x,
\end{equation}
which is equivalent to \eqref{eq:Abel-1} when the island is non-degenerated.
Since
\begin{equation*}\begin{split}
\int_{\Weps}\frac{1}{\beps}\nabla\chid\cdot\nabla\Tilde{\phie}\dd x=
 \int_{\Omega}\frac{1}{\beps}\nabla\chid\cdot\nabla\Tilde{\phie}\dd x
 \rightarrow \int_{\Omega}\frac{1}{b}\nabla\chid\cdot\nabla\Tilde{\phi}^1\dd x,\end{split}
\end{equation*} 
identities \eqref{eq:Abel-1} and \eqref{eq:bharmonic} allow us to conclude
\begin{equation*}
 \int_{\Weps}\frac{1}{\beps}\left|\nabla\tilde{\phie}\right|^2\dd x \to \int_{\Omega}\frac{1}{b}\left|\nabla\Tilde{\phi}^1\right|^2\dd x.
\end{equation*}
Finally, we have shown that 
\begin{equation*}
 \frac{1}{\sqrt{\beps}}\nabla\Tilde{\phie}\rightarrow \frac{1}{\sqrt{b}}\nabla\tilde\phi^1 \qquad \text{strongly in} \quad L^2(\Omega).
\end{equation*}
We obtain
\begin{equation*}
 \frac{1}{\sqrt{\beps}}\nabla{\phie}\rightarrow \frac{1}{\sqrt{b}}\nabla\phi^1 \qquad \text{strongly in} \quad L^2(\Omega),
\end{equation*}
with $\phi^1\in X_b(\Omega)$ and $\diver(b^{-1}\nabla\phi^1)=0$ in $\mathcal{D}'(\Oo)$.
\end{proof}

Next, we show strong compactness for $\beps$-harmonic functions with constant circulation, namely $\pun$ solution to \eqref{def-psi1}.
\begin{lemma}\label{lem:bharmonic}
Let $(\Weps,\beps)$ be a sequence of lakes with one non-degenerated island (as in Definition~\ref{def-generallake}), which converges (in the sense of Assumption~\ref{ass:evanescent}) to a punctured lake $(\Omega,b)$ (as in Definition~\ref{ass:b}). Let $\pun$ be the unique $\beps$-harmonic function such that $\gamma(\frac{1}{\beps}\nabla^\perp\psi_\eps^1)=1$. Then, up to a subsequence
\begin{equation*}
 \frac{1}{\sqrt{\beps}}\nabla\pun\rightarrow \frac{1}{\sqrt{b}}\nabla \psi^1, \qquad \text{strongly in} \quad L^2(\Omega),
\end{equation*}
where $\psi^1\in X_b(\Omega)$ with $\diver(b^{-1}\nabla\psi^1)=0$ in $\mathcal{D}'(\Oo)$ and $\gamma(\frac{1}{b}\nabla^\perp\psi^1)=1$.
\end{lemma}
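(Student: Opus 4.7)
The plan is to reduce the problem to the previous lemma by exploiting the one-dimensional structure of the space of $\beps$-harmonic functions. Recall from the proof of Lemma~\ref{lem:bepsharmonic} that there exists a sequence $(a_\eps)\subset\R$ such that $\pun=a_\eps\phie$, and that the normalization $\gamma(\beps^{-1}\nabla^\perp\pun)=1$ translates via \eqref{eq:circulation-aeps} into
\begin{equation*}
-a_\eps\int_{\Weps}\frac{1}{\beps}\left|\nabla\phie\right|^2\dd x=1.
\end{equation*}
By Lemma~\ref{lem:convergence-phie}, $\sqrt{\beps}^{-1}\nabla\phie$ converges strongly in $L^2(\Omega)$ to $\sqrt{b}^{-1}\nabla\phi^1$, so the integrals $\int_{\Weps}\beps^{-1}|\nabla\phie|^2$ converge to $\int_{\Omega}b^{-1}|\nabla\phi^1|^2$. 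The lower bound \eqref{eq:lower bound}, stemming from the positivity of the weighted capacity of $\{0\}$ (Lemma~\ref{lem:capacity}), ensures that the limit integral is strictly positive, so $a_\eps$ converges to a finite number $a:=-\bigl(\int_{\Omega}b^{-1}|\nabla\phi^1|^2\bigr)^{-1}$. No further subsequence extraction is then needed beyond the one already fixed for $\phie$.

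Setting $\psi^1:=a\phi^1$, the identity $\pun=a_\eps\phie$ combined with the strong $L^2$-convergence of $\sqrt{\beps}^{-1}\nabla\phie$ yields immediately
\begin{equation*}
\frac{1}{\sqrt{\beps}}\nabla\pun=a_\eps\frac{1}{\sqrt{\beps}}\nabla\phie\longrightarrow a\frac{1}{\sqrt{b}}\nabla\phi^1=\frac{1}{\sqrt{b}}\nabla\psi^1\qquad\text{strongly in }L^2(\Omega).
\end{equation*}
Membership $\psi^1\in X_b(\Omega)$ and the equation $\diver(b^{-1}\nabla\psi^1)=0$ in $\mathcal{D}'(\Oo)$ are then inherited from the analogous properties of $\phi^1$ obtained in Lemma~\ref{lem:convergence-phie}.

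The only non-routine point is the passage to the limit in the circulation condition. Since $\pun$ is $\beps$-harmonic, $\curl(\beps^{-1}\nabla^\perp\pun)=0$ in $\Weps$, and the definition \eqref{eq:gencirc} reduces, for $\delta$ small enough so that $\Ieps\subset B(0,\delta)\Subset\Omega$, to
\begin{equation*}
1=\gamma\Bigl(\frac{1}{\beps}\nabla^\perp\pun\Bigr)=-\int_{\Weps}\frac{1}{\beps}\nabla\chid\cdot\nabla\pun\,\dd x.
\end{equation*}
The main obstacle here is to pair the weakly converging sequence $\sqrt{\beps}^{-1}\nabla\pun$ with a suitable strongly converging factor; this is possible precisely because $\nabla\chid$ is supported in a compact subset of $\Oo$, so by \eqref{eq:sqrtbeps} one has $\sqrt{\beps}^{-1}\nabla\chid\to\sqrt{b}^{-1}\nabla\chid$ strongly in $L^2(\Omega)$. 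Combined with the strong $L^2$-convergence of $\sqrt{\beps}^{-1}\nabla\pun$ established above, we obtain
\begin{equation*}
1=\lim_{\eps\to 0}\Bigl(-\int_{\Omega}\frac{1}{\sqrt{\beps}}\nabla\chid\cdot\frac{1}{\sqrt{\beps}}\nabla\pun\,\dd x\Bigr)=-\int_{\Omega}\frac{1}{b}\nabla\chid\cdot\nabla\psi^1\,\dd x=\gamma\Bigl(\frac{1}{b}\nabla^\perp\psi^1\Bigr),
\end{equation*}
which is the desired circulation identity and completes the proof.
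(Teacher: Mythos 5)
Your proof is correct and follows essentially the same route as the paper: reduction to $\pun=a_\eps\phie$, control of $a_\eps$ via the circulation identity \eqref{eq:circulation-aeps} and the capacity lower bound, then transfer of the strong convergence from Lemma~\ref{lem:convergence-phie}. The only (harmless) refinement is that you identify the limit $a=-\bigl(\int_\Omega b^{-1}|\nabla\phi^1|^2\bigr)^{-1}$ explicitly from the convergence of the $X_\eps$-norms, whereas the paper merely extracts a convergent subsequence of the bounded sequence $a_\eps$ by Bolzano--Weierstrass; your treatment of the circulation limit is also a correct, slightly more detailed version of the paper's one-line remark.
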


\begin{proof}
For $\eps>0$, the vector space structure of $\mathcal{H}_{\beps}(\Weps)$ implies that there exists $a_{\eps}\in \R$ such that $\psi_\eps^1=a_\eps\phie$ with $\phie\in \mathcal{H}_{\beps}(\Weps)$ unique solution to \eqref{def-phi1}. In the proof of Lemma~\ref{lem:bepsharmonic}, we have proved that there exists $C>0$ such that $C^{-1}\leq -a_\eps \leq C$ for every $\varepsilon>0$. Hence, in virtue of the Bolzano-Weierstrass Theorem, there exists a convergent subsequence, still denoted by $a_\eps$, converging to $a\in (-\infty,0)$. 
From Lemma~\ref{lem:convergence-phie}, we conclude that
\begin{equation*}
\frac{1}{\sqrt{\beps}}\nabla\psi_\eps^1=\frac{a_\eps}{\sqrt{\beps}}\nabla\phie\rightarrow\frac{a}{\sqrt{b}}\nabla\phi^1,
\end{equation*}
strongly in $L^2(\Omega)$. Upon defining $\psi^1=a\phi^1$, we obtain that $\sqrt{\beps}^{-1}\nabla\psi_\eps^1$ converges strongly to $\sqrt{b}^{-1}\nabla\psi^1$ in $L^2(\Omega)$. 

Passing to the limit in the definition of the generalized circulation \eqref{eq:gencirc}, we easily conclude that $\gamma(\frac{1}{b}\nabla^\perp\psi^1)=1$.
\end{proof}
It remains to show compactness for the sequence $\psi_\eps^0$ solution to \eqref{eq:elliptic0} with Dirichlet conditions.

By \eqref{bd:vorticity}, the Banach-Alaoglu Theorem states that there exists $\omega\in L^{\infty}(\R_+\times \Omega)$ such that $\weps\rightharpoonup^{\ast}\omega$ in $L^{\infty}(\R_+\times \Omega)$ upon extending $\weps$ by zero on $\Omega\backslash\Weps$ and up to passing to a subsequence. In particular, it follows from \eqref{eq:beps} that
\begin{equation}\label{eq:convergence bepsweps}
 \sqrt{\beps}\weps\rightharpoonup^{\ast} \sqrt{b}\omega, \qquad {\beps}\weps\rightharpoonup^{\ast} b\omega \qquad \text{in} \quad L^{\infty}(\R_+;L^p(\Omega))
\end{equation}
for all $p\in(1,\infty)$ and up to extracting subsequences.

\begin{lemma}\label{lem:Dirichlet}
Let $(\Weps,\beps)$ be a sequence of lakes with one non-degenerated island (as in Definition~\ref{def-generallake}), which converges (in the sense of Assumption~\ref{ass:evanescent}) to a punctured lake $(\Omega,b)$ (as in Definition~\ref{ass:b}). Let $\psi_{\eps}^0\in X_{\eps}$ be such that
\begin{equation}\label{eq:psi0eps}
 \diver(\frac{1}{\beps}\nabla\psi_\eps^0)=\beps\weps, \qquad \text{in} \quad \mathcal{D}'(\R_{+}\times \Weps),
\end{equation}
where $\beps\weps$ satisfies \eqref{eq:convergence bepsweps}.
Then, there is a subsequence $\varepsilon\to0$ such that
\begin{equation*}
 \frac{1}{\sqrt{\beps}}\nabla\psi_{\eps}^0\rightarrow \frac{1}{\sqrt{b}}\nabla \psi^0 \qquad \text{in} \quad L_{\loc}^2(\R_+;L^2(\Omega)),
\end{equation*}
where $\psi^0\in L^{\infty}(\R_+;X_b(\Omega))$ such that for a.e. $t\in \R_+$
\begin{equation*}
 \diver\left(\frac{1}{b}\nabla\psi^0(t,\cdot)\right)=b\omega(t,\cdot) \qquad \text{in} \quad \mathcal{D}'(\Oo).
\end{equation*}
\end{lemma}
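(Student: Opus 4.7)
My plan is to combine the uniform $X_\eps$-bounds with an energy identity, resolving the pairing of two weakly convergent sequences via an Aubin--Lions time-compactness argument. First, Lemma~\ref{lem:elliptic} (after extending $\psi_\eps^0$ by zero to $\Omega$) yields uniform bounds of $\psi_\eps^0$ in $L^\infty(\R_+;H^1_0(\Omega))$ and of $\beps^{-1/2}\nabla\psi_\eps^0$ in $L^\infty(\R_+;L^2(\Omega))$. By Banach--Alaoglu and along a subsequence, one has $\psi_\eps^0\rightharpoonup^{\ast}\psi^0$ and $\beps^{-1/2}\nabla\psi_\eps^0\rightharpoonup^{\ast} F$ in the respective spaces. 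To identify $F=b^{-1/2}\nabla\psi^0$, I would pair against $\varphi\in C_c^\infty(\R_+\times(\Omega\setminus\{0\}))$, rewriting the integrand as $\nabla\psi_\eps^0\cdot(\beps^{-1/2}\varphi)$ and using the strong convergence $\beps^{-1/2}\to b^{-1/2}$ in $L^q_{\loc}(\Omega\setminus\{0\})$ from \eqref{eq:sqrtbeps} paired with the weak convergence of $\nabla\psi_\eps^0$; since $\{0\}$ has measure zero, the identification extends a.e.\ to $\Omega$, so $\psi^0\in L^\infty(\R_+;X_b(\Omega))$.

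For the limit equation, I pass to the limit in \eqref{eq:psi0eps} tested against $\varphi\in C_c^\infty(\R_+\times(\Omega\setminus\{0\}))$, which for small $\eps$ lies in $C_c^\infty(\R_+\times\Weps)$ by the Hausdorff convergence. The left-hand side $\int_0^\infty\int_\Omega\beps^{-1/2}\nabla\psi_\eps^0\cdot\beps^{-1/2}\nabla\varphi$ is a weak-strong pair converging to $\int_0^\infty\int_\Omega b^{-1}\nabla\psi^0\cdot\nabla\varphi$, and the right-hand side converges by \eqref{eq:convergence bepsweps}, giving $\diver(b^{-1}\nabla\psi^0)=b\omega$ in $\mathcal{D}'(\R_+\times(\Omega\setminus\{0\}))$.

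For strong convergence, I rely on the energy identity. The density of $C_c^\infty(\Weps)$ in $X_\eps$ (Proposition~\ref{prop density}) allows testing \eqref{eq:psi0eps} by $\psi_\eps^0(t,\cdot)$, yielding
\begin{equation*}
\int_\Omega\beps^{-1}|\nabla\psi_\eps^0(t)|^2\,dx=-\int_\Omega\beps\weps(t)\psi_\eps^0(t)\,dx,
\end{equation*}
and the density of $C_c^\infty(\Omega\setminus\{0\})$ in $X_b(\Omega)$ furnished by Appendix~\ref{app:limit} (crucially using $a_1<2$ from Definition~\ref{ass:b}) allows testing the limit equation by $\psi^0(t,\cdot)$, giving the analogous identity with $(\beps,\weps,\psi_\eps^0)$ replaced by $(b,\omega,\psi^0)$. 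Integrating both identities against a nonnegative $\chi\in C_c^\infty(\R_+)$, strong convergence reduces to showing
\begin{equation*}
\int_0^\infty\chi(t)\int_\Omega\beps\weps\psi_\eps^0\,dx\,dt\;\longrightarrow\;\int_0^\infty\chi(t)\int_\Omega b\omega\psi^0\,dx\,dt.
\end{equation*}

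This last convergence, pairing two weakly convergent sequences, is the main obstacle. I would overcome it by Aubin--Lions--Simon. Testing the time-differentiated equation $\diver(\beps^{-1}\nabla\partial_t\psi_\eps^0)=-\diver(\beps\veps\weps)$ (obtained from the vorticity transport equation $\partial_t(\beps\weps)+\diver(\beps\veps\weps)=0$) against $\partial_t\psi_\eps^0$, and combining the uniform $L^\infty(\R_+;L^2(\Weps))$-bound on $\sqrt{\beps}\veps$ from Lemma~\ref{lem:veps2}, the estimate \eqref{bd:vorticity} for $\weps$, and the uniform $L^\infty$-bound on $\beps$, yields a uniform bound of $\partial_t\psi_\eps^0$ in $L^\infty(\R_+;H^1_0(\Omega))$. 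Coupled with the compact embedding $H^1_0(\Omega)\hookrightarrow L^2(\Omega)$ (Rellich), this provides strong convergence $\psi_\eps^0\to\psi^0$ in $L^2_{\loc}(\R_+;L^2(\Omega))$. Paired with $\beps\weps\rightharpoonup^{\ast} b\omega$ in $L^\infty$, this delivers convergence of the right-hand sides, hence convergence of the $L^2$-norm of $\beps^{-1/2}\nabla\psi_\eps^0$; together with the weak convergence already established, this upgrades to the claimed strong convergence in $L^2_{\loc}(\R_+;L^2(\Omega))$.
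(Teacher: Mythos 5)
Your overall strategy coincides with the paper's: the same uniform $X_\eps$-bounds, the same estimate on $\partial_t\psi_\eps^0$ obtained by testing the time-differentiated equation, compactness in time to resolve the product $\beps\weps\psi_\eps^0$, and an energy identity to upgrade weak to strong convergence. However, there is one genuine gap, precisely at the step where you test the limit equation against $\psi^0(t,\cdot)$. You invoke ``the density of $C_c^{\infty}(\Omega\setminus\{0\})$ in $X_b(\Omega)$ furnished by Appendix~\ref{app:limit}''. This is not what the appendix provides, and it is in fact false: since $\Cap_{b^{-1}}(\{0\},\Omega)>0$ (Lemma~\ref{lem:capacity}), the closure $\widetilde{X_b}(\Oo)$ of $C_c^{\infty}(\Oo)$ in the $X_b$-norm is a \emph{proper} subspace of $X_b(\Omega)$ --- the paper states this explicitly at the end of Appendix~\ref{app:limit}. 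What Proposition~\ref{prop density} gives is the density of (a subset of) $C_c^{\infty}(\Omega)$, whose elements are not admissible test functions for an equation that holds only in $\mathcal{D}'(\Oo)$. The point is not cosmetic: the function $\phi^1$ satisfies $\diver(b^{-1}\nabla\phi^1)=0$ in $\mathcal{D}'(\Oo)$ yet $\int_\Omega b^{-1}\nabla\chid\cdot\nabla\phi^1=\int_\Omega b^{-1}|\nabla\phi^1|^2>0$, so the identity you want can genuinely fail for elements of $X_b(\Omega)\setminus\widetilde{X_b}(\Oo)$, and without the energy identity for $\psi^0$ your norm-convergence argument only yields $\lim\|\beps^{-1/2}\nabla\psi_\eps^0\|_{L^2}^2=-\int b\omega\psi^0\geq\|b^{-1/2}\nabla\psi^0\|_{L^2}^2$, not equality.

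The gap can be closed as the paper does. Each $\psi_\eps^0(t,\cdot)$ is approximable by $C_c^{\infty}(\Weps)\subset C_c^{\infty}(\Oo)$ in the $X_\eps$-norm, hence also in the $X_b$-norm because $\beps\leq Cb$ and $\Weps\subset\Omega$ (Assumption~\ref{ass:evanescent}); thus $\psi_\eps^0(t,\cdot)\in\widetilde{X_b}(\Oo)$ and the limit equation may be tested against $\psi_\eps^0(t,\cdot)$, giving $-\int_\Omega b^{-1}\nabla\psi^0\cdot\nabla\psi_\eps^0=\int_\Omega b\omega\,\psi_\eps^0$. Letting $\eps\to0$ using the weak convergence of $b^{-1/2}\nabla\psi_\eps^0$ to $b^{-1/2}\nabla\psi^0$ (and the strong $L^2$-convergence of $\psi_\eps^0$ you already established) then yields $\|b^{-1/2}\nabla\psi^0(t,\cdot)\|_{L^2(\Omega)}^2=-\int_\Omega b\omega\,\psi^0$, which is exactly the missing energy identity; the rest of your argument then goes through.
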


\begin{proof}
We recall that Lemma~\ref{lem:elliptic} yields that $\psi_{\eps}^0\in L^{\infty}(\R_+;H_0^1(\Weps))$ uniformly bounded and $\psi_\eps^0\in L^{\infty}(\R_+;X_{\eps})$ uniformly bounded. Further, it follows from \eqref{eq:vorticity} and \eqref{eq:psi0eps} that 
\begin{equation*}
 \diver\left(\frac{1}{\beps}\nabla\partial_t\psi_{\eps}^0\right)=\partial_t(\beps\weps)=-\diver(\beps\veps\weps)
\end{equation*}
in the sense of distributions. Now, we claim that $\partial_t\psi_\eps^0\in L^{\infty}(\R_+;X_{\eps})$. Indeed, the solutions constructed in \cite{BM,LNP} were obtained by compactness where at the limit $\psi_{\eps}^0\in W^{1,\infty}(\R_+;X_{\eps})$. Thus, exploiting that $C_c^{\infty}(\Weps)$ is dense in $X_{\eps}$, we can consider $\partial_t\psi_\eps^0(t,\cdot)$ as a test function in the previous equation to state that for a.e. $t\in\R_{+}$
\[
 \left\|\frac{1}{\sqrt{\beps}}\nabla\partial_t\psi_\eps^0(t,\cdot)\right\|_{L^2(\Weps)}^2 =-\int_{\Omega_{\varepsilon}} \beps\veps(t,x)\weps \cdot \nabla\partial_t\psi_\eps^0(t,x)\dd x
\]
hence
\begin{equation*}
\left\|\frac{1}{\sqrt{\beps}}\nabla\partial_t\psi_\eps^0\right\|_{L^{\infty}(\R_+;L^2(\Weps))}\leq \|\sqrt{\beps}\veps\|_{L^{\infty}(\R_{+};L^2(\Weps))}\|\beps\weps\|_{L^{\infty}(\R_+;L^{\infty}(\Weps))}
\end{equation*}
yielding $\partial_t\psi_\eps^0\in L^{\infty}(\R_+;X_{\eps})$ uniformly bounded. In particular, by applying the Poincar\'e inequality in $\Omega$ we have $\psi_{\eps}^0\in W^{1,\infty}(\R_+;H_0^1(\Weps))$ uniformly bounded. Upon extending $\psi_{\eps}^0$ and $\frac{1}{\sqrt{\beps}}\nabla\psi_{\eps}^0$ by zero on $\Omega$ and recalling that the sequence $\Weps\subset \Omega$ we recover that there exists $\psi^0\in W^{1,\infty}(\R_+;H_0^1(\Omega))$ and a subsequence still denoted by $\varepsilon\to 0$ such that 
\begin{equation*}
 \psi_{\eps}^0\rightharpoonup^{\ast} \psi^0 \quad \text{in} \quad W^{1,\infty}(\R_+;H_0^1(\Omega)), \qquad \psi_{\eps}^0\rightarrow \psi^0 \quad \text{in} \quad C(([0,T];L^2(\Omega)),
\end{equation*}
for any $T>0$. Moreover, there exists $f\in W^{1,\infty}(\R_+;L^2(\Omega))$ such that 
\begin{equation*}
 \frac{1}{\sqrt{\beps}}\nabla\psi_\eps^0\rightharpoonup^{\ast} f \quad \text{in} \quad W^{1,\infty}(\R_+;L^2(\Omega)),
\end{equation*}
where we can show that $f=\frac{1}{\sqrt{b}}\nabla\psi^0$ a.e. in $\R_+\times\Omega$ proceeding as in the proof of Lemma~\ref{lem:convergence-phie}.
Therefore, $\psi^0\in W^{1,\infty}(\R_+;X_b(\Omega))$. Actually, the previous proof is the same as in \cite{LNP} to show that $\psi^0_{\varepsilon}\in W^{1,\infty}(\R_+;X_\varepsilon)$.

Next, we wish to pass to the limit in \eqref{eq:psi0eps}. Let $\varphi\in C_c^{\infty}(\R_+\times\Oo)$, then there exists $T>0$ such that $\supp(\varphi)\subset [0,T)\times\Weps$ for all $\eps$ sufficiently small as $\Weps\rightarrow \Oo$ in Hausdorff sense. Since $\beps\weps$ converges weakly-$\ast$ to $b\omega$ in $L^{\infty}(\R_+\times \Omega)$ from \eqref{eq:convergence bepsweps}, we obtain that
\begin{equation*}
 \diver\left(\frac{1}{b}\nabla\psi^0\right)=b\omega \qquad \text{in} \quad \mathcal{D}'(\R_{+}\times\Oo).
\end{equation*}
hence, for a.e. $t\in \R_+$
\begin{equation*}
 \diver\left(\frac{1}{b}\nabla\psi^0(t,\cdot)\right)=b\omega(t,\cdot) \qquad \text{in} \quad \mathcal{D}'(\Oo).
\end{equation*}
Then, we argue as in the proof of Lemma~\ref{lem:convergence-phie}: as $C^\infty_{c}(\Weps)$ is dense in $X_{\varepsilon}$ we know that there exists a sequence of smooth compactly supported $\varphi_{n}$ which tends to $\psi^0_{\varepsilon}$ in the $X_{\varepsilon}$ norm, which is also true in the $X_{b}$ norm because $\beps\leq Cb$. Therefore, we have for a.e. $t\in \R_+$
\[
-\int_{\Omega} \frac1b \nabla\psi^0(t,x) \cdot \nabla \psi^0_{\varepsilon}(t,x) \dd x = \int_{\Omega} b\omega(t,x) \psi^0_{\varepsilon}(t,x)\dd x.
\]
Next, we notice that the sequence $\sqrt{b}^{-1} \nabla \psi^0_{\varepsilon}$ is bounded in $L^\infty(\R_{+};L^2(\Omega))$, hence it converges weakly-$\ast$ in $L^\infty(\R_{+};L^2(\Omega))$ to a $L^\infty L^2$ function which is a.e. equal to $\sqrt{b}^{-1} \nabla \psi^0(t,\cdot)$. This allows us to state that for a.e. $t\in \R_{+}$
\[
-\Big\| \frac1{\sqrt b} \nabla\psi^0(t,\cdot) \Big\|_{L^2(\Omega)}^2 = \int_{\Omega} b\omega(t,x) \psi^0(t,x)\dd x.
\]
For a.e. $t\in \R_{+}$, we exploit the density of $C_c^{\infty}$ in $X_{\eps}$ to state that \eqref{eq:psi0eps} also holds with $\psi_{\eps}^0$ as a test function, hence
\begin{multline*}
 \int_0^T\int_{\Weps}\frac{1}{\beps}\left|\nabla\psi_\eps^0\right|^2\dd x \dd t=-\int_0^T\int_{\Weps}\beps\weps\psi_\eps^0\dd x \dd t\\
 \rightarrow -\int_0^T\int_{\Omega}b\omega\psi^0 \dd x \dd t=\int_0^T\int_{\Omega}\frac{1}{b}\left|\nabla\psi^0\right|^2 \dd x \dd t,
\end{multline*}
where we used the strong $L^2$-convergence of $\psi_{\eps}^0$. The desired strong $L^2$-convergence of $\sqrt{\beps}^{-1}\nabla\psi_\eps^0$ follows.
\end{proof}

\subsection{Proof of Theorem~\ref{thm:maingeneral}}
First, we prove strong convergence to the target system posed on $\R_+\times \Oo$.
\begin{proposition}\label{prop:puncturedlake}
Let $(\Weps,\beps)$ be a sequence of lakes with one non-degenerated island (as in Definition~\ref{def-generallake}), which converges (in the sense of Assumption~\ref{ass:evanescent}) to a punctured lake $(\Omega,b)$ (as in Definition~\ref{ass:b}). Given $\gamma\in \R$ and $\omega^0\in L^{\infty}(\Omega)$, let $(\veps,\weps)$ be a global weak solution given in Theorem~\ref{thm:existencenonsmooth} with initial vorticity $\omega^0$ and initial circulation $\gamma \in \R$. As $\eps\rightarrow 0$, there exists a subsequence still denoted by $(\veps,\weps)$ such that
 \begin{equation*}
\begin{aligned}
 \sqrt{\beps}\veps&\rightarrow \sqrt{b}v \quad \text{strongly in} \quad L^{2}_{\loc}(\R_{+};L^2(\Omega)),\\
 \weps&\rightharpoonup^{\ast} \omega \quad \text{weakly-$\ast$ in} \quad L^{\infty}(\R_{+}\times \Omega),
\end{aligned}
\end{equation*}
where $(v,\omega)$ is a solution of the vorticity formulation of the lake equations in $\R_{+}\times \Oo$ with initial data $(\omega^0,v^0)$ and $\gamma(v)=\gamma(v^0)=\gamma$ and satisfies the Hodge decomposition
\begin{equation}\label{eq:Hodgelimit}
 v=\frac{1}{b}\left(\nabla^{\perp}\psi^ 0+\alpha\nabla^{\perp}\psi^1\right), \quad \alpha(t)=\gamma+\int_{\Omega}b\omega(t,\cdot)\phi^1\dd x.
\end{equation}
with $\phi^1, \psi^1, \psi^0$ as in Lemmas~\ref{lem:convergence-phie}, \ref{lem:bharmonic} and \ref{lem:Dirichlet} respectively. In particular, $\curl(v)=b\omega$ in $\mathcal{D}'(\R_+\times\Oo)$.
\end{proposition}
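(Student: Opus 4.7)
The plan is to extend every $\veps$, $\weps$ and the stream functions by zero to $\Omega$ and then combine the Hodge decomposition \eqref{eq:BS} with the three compactness Lemmas~\ref{lem:convergence-phie}, \ref{lem:bharmonic} and \ref{lem:Dirichlet} already established. Writing
\[
\sqrt{\beps}\veps(t,\cdot)=\frac{1}{\sqrt{\beps}}\nabla^{\perp}\psi_{\eps}^{0}(t,\cdot)+\alphae(t)\,\frac{1}{\sqrt{\beps}}\nabla^{\perp}\pun,
\]
Lemma~\ref{lem:Dirichlet} provides strong $L^{2}_{\loc}(\R_{+};L^{2}(\Omega))$ convergence of the first term to $\sqrt{b}^{-1}\nabla^{\perp}\psi^{0}$, while Lemma~\ref{lem:bharmonic} provides strong $L^{2}(\Omega)$ convergence of $\sqrt{\beps}^{-1}\nabla^{\perp}\pun$ to $\sqrt{b}^{-1}\nabla^{\perp}\psi^{1}$. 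It therefore only remains to control the scalar coefficient $\alphae$.

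By Proposition~\ref{prop:BS} and Kelvin's theorem, $\alphae(t)=\gamma+\int_{\Weps}\beps\weps(t,\cdot)\,\phi_{\eps}^{1}\,\dd x$. Lemma~\ref{lem:uniformphieps} together with Rellich's theorem gives strong $L^{2}(\Omega)$ convergence of $\phi_{\eps}^{1}$ to $\phi^{1}$, while \eqref{eq:convergence bepsweps} yields $\beps\weps\rightharpoonup^{\ast}b\omega$ in $L^{\infty}(\R_{+}\times\Omega)$; the weak-strong pairing produces pointwise-in-$t$ convergence of $\alphae$ to $\alpha(t):=\gamma+\int_{\Omega}b\omega\,\phi^{1}\,\dd x$. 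To upgrade this to a convergence compatible with the time-integrated formulation, I would differentiate $\alphae$ in time, using $\partial_{t}(\beps\weps)=-\diver(\beps\veps\weps)$ and the density of $C^{\infty}_{c}$ in $X_{\eps}$ (Appendix~\ref{app:limit}), to obtain $\partial_{t}\alphae=\int\beps\veps\weps\cdot\nabla\phi_{\eps}^{1}\,\dd x$, bounded uniformly by $\|\weps\|_{L^{\infty}}\|\sqrt{\beps}\veps\|_{L^{2}}\|\sqrt{\beps}\nabla\phi_{\eps}^{1}\|_{L^{2}}$ thanks to the uniform estimates of Section~\ref{sec:main}. Arzel\`a-Ascoli then provides convergence of $\alphae$ in $C_{\loc}(\R_{+})$, and combining with the above yields
\[
\sqrt{\beps}\veps\to\sqrt{b}\,v,\quad v:=\frac{1}{b}\nabla^{\perp}\psi^{0}+\alpha\,\frac{1}{b}\nabla^{\perp}\psi^{1},\quad \text{strongly in }L^{2}_{\loc}(\R_{+};L^{2}(\Omega)),
\]
which simultaneously establishes the strong convergence statement and the Hodge decomposition \eqref{eq:Hodgelimit}.

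With the strong convergence in hand, passing to the limit in \eqref{eq:vorticity} for $\Phi\in C_{c}^{\infty}([0,\infty)\times\Oo)$ becomes accessible: Hausdorff convergence of $\Weps$ to $\Oo$ implies $\supp\Phi\subset[0,T)\times\Weps$ for $\eps$ small, so the weak formulation is available along the sequence; the two linear terms pass to the limit using the weak-$\ast$ convergence \eqref{eq:convergence bepsweps} and the $L^{1}_{\loc}(\Oo)$ convergence $\beps\to b$ from Assumption~\ref{ass:evanescent}(\emph{3}); the critical quadratic flux $\beps\veps\weps=(\sqrt{\beps}\veps)(\sqrt{\beps}\weps)$ is handled by the weak-strong pairing of the newly established strong $L^{2}_{\loc}$ convergence of $\sqrt{\beps}\veps$ against the weak-$\ast$ convergence $\sqrt{\beps}\weps\rightharpoonup^{\ast}\sqrt{b}\omega$. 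Identification $\curl v=b\omega$ in $\mathcal{D}'(\R_{+}\times\Oo)$ follows by testing \eqref{eq:Hodgelimit} against $\nabla^{\perp}\Phi$ and invoking Lemma~\ref{lem:Dirichlet} together with the $b$-harmonicity of $\psi^{1}$, while the identity $\gamma(v)=\gamma$ is inherited from $\gamma(\veps)=\gamma$ since the relation $\gamma\bigl(\tfrac{1}{\beps}\nabla^{\perp}\psi_{\eps}^{0}\bigr)=-\int\beps\weps\phi_{\eps}^{1}\,\dd x$ passes cleanly to the limit. The main obstacle throughout is the absence of any uniform gradient bound on $\veps$ near the origin, which rules out Aubin-Lions compactness for $\veps$ itself; compactness of the nonlinear flux has to be squeezed at the level of the stream functions in weighted spaces and then transferred back through the Hodge decomposition, which is exactly the role of the three convergence lemmas proved earlier.
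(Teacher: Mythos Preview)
Your proposal is correct and follows essentially the same route as the paper: Hodge decomposition, strong convergence of the two stream-function pieces via Lemmas~\ref{lem:Dirichlet} and~\ref{lem:bharmonic}, a $W^{1,\infty}(\R_{+})$ bound on $\alphae$ obtained by differentiating in time and testing the vorticity equation against $\phie$, Arzel\`a--Ascoli for $\alphae\to\alpha$ in $C_{\loc}(\R_{+})$, and then the weak--strong pairing $(\sqrt{\beps}\veps)(\sqrt{\beps}\weps)$ to pass to the limit in \eqref{eq:vorticity}. Two minor remarks: the ``pointwise-in-$t$'' convergence you claim from the weak--strong pairing of $\beps\weps$ against $\phie$ is not literally available (weak-$\ast$ in $L^{\infty}(\R_{+}\times\Omega)$ gives only time-integrated convergence), but since you immediately upgrade via Arzel\`a--Ascoli this is harmless; and when testing the vorticity equation against $\phie\varphi$, density in $X_{\eps}$ alone is not quite enough since $\phie\notin X_{\eps}$---you need the decomposition $\phie=\tphie+\chi_{\delta}$ and the fact that \eqref{eq:vorticity} holds for test functions supported up to the island boundary, which the paper invokes explicitly.
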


\begin{proof}
Given a lake $(\Omega,b)$ satisfying Definition~\ref{ass:b}, let $(\veps,\weps)$ be a sequence of weak solutions to the lake equations posed on $(\Weps,\beps)$ as in Assumption~\ref{ass:evanescent} and provided by Theorem~\ref{thm:existencenonsmooth}. The weak convergence for $\omega_{\varepsilon}$ comes directly from \eqref{bd:vorticity}, which also gives the uniform estimate for $\omega$.

We recall that by means of the Hodge decomposition one has 
\begin{equation*}
 \sqrt{\beps}\veps=\frac{1}{\sqrt{\beps}}\nabla^{\perp}\psi_{\eps}^0+\frac{\alphae}{\sqrt{\beps}}\nabla^{\perp}\psi_\eps^1,
\end{equation*}
with 
\begin{equation*}
 \alphae(t)=\gamma+\int_{\Weps}\beps\weps(t,x)\phie(x)\dd x.
\end{equation*}
We proved in Lemma~\ref{lem:veps2} that $\alphae$ is uniformly bounded in $\R_{+}$. In addition, we have that $\gamma$ is conserved and for all $\varphi\in C^\infty_{c}((0,\infty))$
\begin{align*}
\langle \partial_t\alphae, \varphi\rangle&= - \int_{0}^\infty\int_{\Weps} \beps\weps \partial_{t} (\phie \varphi) \dd t\dd x \\
&=\int_{0}^\infty\int_{\Weps}\beps\veps\weps \cdot \nabla (\phie \varphi ) \dd t\dd x ,
\end{align*}
where we have used that \eqref{eq:vorticity} is also true for $\Phi \in C^\infty_{c}(\R_{+}\times \overline{\Weps})$ (see the second remark after Theorem~\ref{thm:maingeneral}), the density of $C^\infty_{c}(\Weps)$ in $X_{\varepsilon}$ and the decomposition $\phie=\tilde\phie+\chi_{\delta}$.
As $\sqrt{\beps}\veps\sqrt{\beps}\weps$ is uniformly bounded in $L^{\infty}(0,T;L^2(\Weps))$ as consequence of Assumption~\ref{ass:evanescent}, Theorem~\ref{thm:existencenonsmooth}, Lemma~\ref{lem:veps2}, and $\phie\in H^1(\Weps)$ uniformly bounded from Lemma~\ref{lem:uniformphieps}, we obtain $\alphae\in W^{1,\infty}(\R_+)$ uniformly bounded. Hence, there exists $\alpha\in W^{1,\infty}(\R_+)$ such that 
\begin{equation}\label{eq:convergencealpha}
 \alphae\rightharpoonup^{\ast} \alpha \quad \text{in} \quad W^{1,\infty}(\R_+), \qquad \alphae \rightarrow \alpha \quad \text{in}\quad L_{\loc}^{\infty}(\R_+),
\end{equation}
up to passing to a subsequence. Therefore, combining Lemma~\ref{lem:bharmonic}, Lemma~\ref{lem:Dirichlet} and \eqref{eq:convergencealpha}, we conclude that 
\begin{equation*}
 \sqrt{\beps}\veps\rightarrow \sqrt{b}v \quad \text{strongly in} \quad L_{\loc}^2(\R_+,L^2(\Omega)),
\end{equation*}
where $v$ satisfies the Hodge decomposition \eqref{eq:Hodgelimit}. We have identified $\alpha$ thanks to Lemma~\ref{lem:convergence-phie}.

Next, we pass to the limit in \eqref{eq:vorticity}. As $\Weps$ converges to $\Oo$ in Hausdorff sense, given $\Phi\in C_c^{\infty}(\R_+\times \Oo)$ one has $\supp(\Phi)\subset \R_+\times \Weps$ for all $\eps$ sufficiently small. Hence, \eqref{eq:vorticity} is verified for $\Phi\in C_c^{\infty}(\R_+\times \Oo)$ and $\eps$ sufficiently small. The strong convergence of $\sqrt{\beps}\veps$ in $L_{\loc}^2(\R_+\times \Omega)$ together with the weak convergence of $\beps\weps$ and $\sqrt{\beps}\weps$ from \eqref{eq:convergence bepsweps} is sufficient to pass to the limit in \eqref{eq:vorticity}. We notice that as $\sqrt{\beps}\veps^0\in L^2(\Weps)$ uniformly bounded, we can follow the same lines to conclude that $\sqrt{\beps}\veps^0\to \sqrt{b}v^0$ in $L^2(\Omega)$ where $v^0$ satisfies the Hodge decomposition \eqref{eq:Hodgelimit}. 

Further, if follows from \eqref{eq:Hodgelimit} that for a.e. $t\in \R_+$, we have $\curl(v(t,\cdot))=b\omega(t,\cdot)$ in $\mathcal{D}'(\Oo)$. Moreover, due to the previous convergences, one may pass to the limit in \eqref{imperm}-\eqref{imperm2} to get the impermeability and divergence free condition for $v^0$ and $v$. Finally, we observe that passing to the limit in \eqref{eq:gencirc} yields $\gamma(v(t,\cdot))=\gamma(v^0) = \gamma$ for a.e. $t\in \R_+$.
\end{proof}

\begin{remark}\label{rem:testsupport}
 As said in the forth remark after Theorem~\ref{thm:maingeneral}, $(\veps,\weps)$ constructed in Theorem~\ref{thm:existencenonsmooth} verifies \eqref{eq:vorticity} also for $\Phi \in C^\infty_{c}(\R_{+}\times \overline{\Weps})$. Hence, we can notice that the previous proof works fine for $\Phi\in C_c^{\infty}(\R_+\times \overline{\Omega})$ and we state that $(v,\omega)$ satisfies \eqref{eq:vorticity} for such test functions. 
 
In the previous proposition, we have stated that $(v,\omega)$ is a solution to the vorticity formulation in $\R_{+}\times \Oo$ for two reasons. First, we think that it is an interesting notion because the point is a non-erasable singularity in the weighted norm, which is the natural framework for the lake equation (see Lemma~\ref{lem:capacity}). Second, we need additional arguments to compute $\curl v$ in $\mathcal{D}'(\Omega)$.
\end{remark}

\begin{remark}\label{rem:vLp}
Up to now, we have never used that $a_{1}<2$. Hence, if we are only interested in the existence of global weak solution in $\Oo$, we may relax the assumptions on $a_1$, see also the related Remark~\ref{rem:genass} for possible generalizations. The assumption $a_1\in(0,2)$, namely (\emph{3}) of Definition~\ref{ass:b} together with Remark~\ref{rem:b} yield that $v\in L^{\infty}(\R_+;L_{\loc}^p(\Omega))$ with $\frac{1}{p}=\frac{1}{2}+\frac{1}{q}$. If further $a_0\in[0,1)$ then $\sqrt{b}^{-1}\in L^q(\Omega)$ and $v\in L^{\infty}(\R_+;L^p(\Omega))$. If $v\in L_{\loc}^1(\Omega)$, then we may identify $\curl(v)$ in $\mathcal{D}'(\R_+\times\Omega)$ which is crucial for recovering the asymptotic equation on $\R_+\times\Omega$.
\end{remark}

We need the following property of distributions supported in $\{0\}\subset \Omega$ to pass to the limit in \eqref{eq:vorticity} posed on $\R_+\times\Omega$.
\begin{lemma}\label{lem:delta}
Let $\Omega$ be an open simply connected set in $\R^2$ and $0\in \Omega$. Let $T\in \mathcal{D}'(\Omega)$ be such that $\supp(T)=\{0\}$. Then, there exists a multi-index $\alpha$ and real coefficients $a_{\alpha}$ such that 
\begin{equation*}
 T=\sum_{|\alpha|\leq k}a_{\alpha}\partial^{\alpha}\delta_0,
\end{equation*}
where $k\in \N$ is given by the order of $T$. If in addition $T\in W^{-1,p}(\Omega)$ with $p\in[1,2)$, then $T=a_0\delta_0$.
\end{lemma}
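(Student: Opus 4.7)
\textbf{Proof plan for Lemma~\ref{lem:delta}.} The first assertion is the classical theorem of Schwartz on distributions supported at a single point, and I would simply recall its proof briefly. Since $\supp(T) = \{0\}$ is compact, $T$ has globally finite order, say $k$; by Taylor's formula combined with a family of cutoffs $\chi_\lambda(x) = \chi(x/\lambda)$ concentrating at $0$, one shows that $\langle T, \varphi \rangle = 0$ for every $\varphi \in C_c^\infty(\Omega)$ such that $\partial^\alpha \varphi(0) = 0$ for all $|\alpha| \leq k$. Consequently $\langle T, \varphi\rangle$ depends only on the Taylor jet of $\varphi$ at $0$ up to order $k$, which gives the representation $T = \sum_{|\alpha|\leq k} a_\alpha \partial^\alpha \delta_0$.

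For the second assertion, the plan is a scaling argument whose goal is to isolate each coefficient $a_\beta$ with $|\beta| \geq 1$ and show it must vanish. Fix $\eta \in C_c^\infty(B(0,2))$ with $\eta \equiv 1$ on $B(0,1)$, and for $|\beta| \geq 1$ and $\lambda > 0$ small enough set
\[
 \varphi_{\beta,\lambda}(x) := x^\beta\, \eta(x/\lambda) \in C_c^\infty(\Omega) \subset W^{1,p'}_0(\Omega),
\]
with $p' = p/(p-1) \in (2,\infty]$. Since $\varphi_{\beta,\lambda}(x) = x^\beta$ on $B(0,\lambda)$, one has $\partial^\alpha \varphi_{\beta,\lambda}(0) = \beta!\, \delta_{\alpha,\beta}$, and using the decomposition from the first part,
\[
 \langle T, \varphi_{\beta,\lambda}\rangle = (-1)^{|\beta|}\, \beta!\, a_\beta,
\]
independently of $\lambda$.

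The key step is then the norm estimate: a direct computation using the supports and the chain rule gives
\[
 \|\varphi_{\beta,\lambda}\|_{L^{p'}(\Omega)} \lesssim \lambda^{|\beta|+ 2/p'}, \qquad \|\nabla \varphi_{\beta,\lambda}\|_{L^{p'}(\Omega)} \lesssim \lambda^{|\beta|-1+ 2/p'}.
\]
Since $p' > 2$ we have $2/p' > 0$, and since $|\beta| \geq 1$, the exponent $|\beta|-1+2/p'$ is strictly positive; hence $\|\varphi_{\beta,\lambda}\|_{W^{1,p'}_0(\Omega)} \to 0$ as $\lambda \to 0$. Combined with the hypothesis $T \in W^{-1,p}(\Omega)$, the duality estimate
\[
 |\langle T, \varphi_{\beta,\lambda}\rangle| \leq \|T\|_{W^{-1,p}(\Omega)}\, \|\varphi_{\beta,\lambda}\|_{W^{1,p'}_0(\Omega)} \Tend{\lambda}{0} 0
\]
forces $a_\beta = 0$ for every $|\beta|\geq 1$, leaving $T = a_0\, \delta_0$.

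The first part is essentially a citation, so the only real work is in the second part; the main subtlety there is checking that the scaling exponent $|\beta|-1+2/p'$ is strictly positive, which is precisely where the two-dimensional constraint $p<2$ (equivalently $p'>2$) enters and below which the argument would break down.
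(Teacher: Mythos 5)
Your proof of the second assertion is correct for $p\in(1,2)$ but takes a genuinely different route from the paper. The paper first observes that the Sobolev embedding $C^1(\Omega)\subset W^{1,p'}(\Omega)\subset C^0(\Omega)$ (valid in two dimensions since $p'>2$) forces the order of $T$ to be at most $1$, and then rules out the first-order terms by pairing with the explicit non-differentiable function $f=|x|^{1-\frac{2}{p'}+}\in W^{1,p'}(\Omega)$, for which $\langle \partial_i\delta_0, f\rangle$ is not well-defined. You instead test against the smooth scaling family $\varphi_{\beta,\lambda}(x)=x^\beta\eta(x/\lambda)$ and use the duality bound together with the norm decay $\|\varphi_{\beta,\lambda}\|_{W^{1,p'}}\lesssim \lambda^{|\beta|-1+2/p'}\to 0$. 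Your computation of the exponents is correct, and your argument has two advantages: it treats all $|\beta|\geq 1$ uniformly without first bounding the order, and it only ever pairs $T$ with genuine $C_c^\infty$ test functions, whereas the paper's pairing with a non-smooth $f$ implicitly appeals to the density of $C_c^\infty(\Omega)$ in $W_0^{1,p'}(\Omega)$ and the resulting unique continuous extension of $T$. Both arguments hinge on the same two-dimensional threshold $p<2\Leftrightarrow p'>2$.

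There is, however, a gap at the endpoint $p=1$, which the lemma as stated includes. There $p'=\infty$ and $2/p'=0$, so for $|\beta|=1$ your exponent $|\beta|-1+2/p'$ vanishes: $\|\nabla\varphi_{\beta,\lambda}\|_{L^\infty}\sim 1$ does not tend to zero, and the duality estimate gives nothing. (Your parenthetical claim that $2/p'>0$ because $p'>2$ silently excludes $p'=\infty$.) The repair is easy and stays within your framework: for $p=1$ write $T=f_0+\diver F$ with $f_0, F\in L^1(\Omega)$ and estimate $|\langle T,\varphi_{\beta,\lambda}\rangle|\leq \|f_0\|_{L^1(B(0,2\lambda))}\|\varphi_{\beta,\lambda}\|_{L^\infty}+\|F\|_{L^1(B(0,2\lambda))}\|\nabla\varphi_{\beta,\lambda}\|_{L^\infty}\to 0$ by absolute continuity of the integral, while the left-hand side equals $(-1)^{|\beta|}\beta!\,a_\beta$. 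It is worth noting that the paper's own argument also degenerates at $p=1$ (the function $|x|^{1+\eps}$ is differentiable at the origin with vanishing gradient, so the pairing is well-defined there), and that in the application to Theorem~\ref{thm:maingeneral} one may always choose $p>1$; still, if you state the lemma for $p\in[1,2)$ you should cover the endpoint.
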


\begin{proof}
We remind that a distribution with compact support is of finite order and \cite[Theorem 2.3.4]{H} yields the first part of the statement. For the second part, we need to show that $T$ is of order $0$. As $C^1(\Omega)\subset W^{1,p'}(\Omega)\subset C^0(\Omega)$, if follows that the order $k$ of $T$ is at most $1$. Assume by contradiction that $k=1$, then there exists $\alpha_1\neq 0$ with $|\alpha_1|=1$ such that $T=\sum_{|\alpha|\leq 1}a_{\alpha}\partial^{\alpha}\delta_0$. It is then easy to verify that $T\notin W^{-1,p}(\Omega)$. Indeed, it suffices to consider $f=|x|^{1-\frac{2}{p'}+}\in W^{1,p'}(\Omega)$ for which the pairing $\left\langle (a_{1}\partial_{1}+a_{2}\partial_{2})\delta_0, f\right\rangle$ is not well-defined when $(a_{1},a_{2})\neq(0,0)$, contradicting $T\in W^{-1,p}(\Omega)$. Hence $k=0$ and $T=a_0\delta_0$.
\end{proof}

We are now in position to prove Theorem~\ref{thm:maingeneral}.
\begin{proof}[Proof of Theorem~\ref{thm:maingeneral}]
We have already noted in Remark~\ref{rem:testsupport} that \eqref{eq:vorticity} is satisfied in $\mathcal{D}'(\R_+\times \Omega)$. Hence, it suffices to show that $\curl(v)=b\omega+\gamma\delta_0$ and the theorem will then follow from Proposition~\ref{prop:puncturedlake}. 

As Remark~\ref{rem:vLp} yields that there exists $p\in[1,2)$ such that $v\in L^{\infty}(\R_+;L_{\loc}^p(\Omega))$ and that for a.e. $t\in \R_+$,
\begin{equation*}
 \curl v(t,\cdot)=b\omega(t,\cdot) \qquad \text{in} \quad \mathcal{D}'(\Oo),
\end{equation*}
we state that the distribution defined as $T:=\curl v(t,\cdot)-b\omega(t,\cdot)$ is supported on $\{0\}$ and belongs to $W_{\loc}^{-1,p}(\Omega)$. Hence, Lemma~\ref{lem:delta} yields that $T=\beta\delta_0$ for some $\beta\in \R$. Moreover, we have passed to the limit for the generalized circulation $\gamma(\veps)$ to prove that
\begin{equation*}
 \gamma=\gamma(v(t,\cdot))=-\int_{\Omega}\nabla^{\perp}\chid\cdot v(t,x)\dd x -\int_{\Omega}\chid b\omega(t,x)\dd x.
\end{equation*}
Thus, we conclude
\begin{equation*}\begin{split}
 \beta=\left\langle T,\chid\right\rangle&=\left\langle \curl v(t,\cdot)-b\omega(t,\cdot), \chid\right\rangle\\
&=-\int_{\Omega}\nabla^{\perp}\chid\cdot v(t,x)\dd x-\int_{\Omega}\chid b\omega(t,x)\dd x=\gamma
\end{split}
 \end{equation*}
which reads
\begin{equation*}
 \curl v(t,\cdot)=b\omega(t,\cdot)+\gamma\delta, \qquad \text{in} \quad \mathcal{D}'(\Omega),
\end{equation*}
for a.e. $t\in \R_{+}$, and then completes the proof.
\end{proof}

\begin{remark}\label{rm:univ0}
We finish this section by noticing that the compactness argument provides us the existence of a vector field $v^{0}$ verifying $\sqrt{b}v^0\in L^2(\Omega), $\eqref{imperm} and $\curl v^0= \omega^0+\gamma\delta_{0}$. As the punctured lake under consideration verifies the assumption for the density of $C^\infty_{c}(\Omega)$ in $X_{b}$, Corollary~\ref{coro:BSlimit} states that such $v^0$ is unique. As $v^0$ is exactly the same as in Proposition~\ref{prop:puncturedlake}, we deduce from this analysis, which passes by the formulation in $\Omega$, that the initial data $v^0$ is uniquely determined by the curl in $\Oo$ (namely $b\omega^0$) and the generalized circulation (namely $\gamma$), even if the island is degenerated (with $a_{1}<2$ in order to use Remark~\ref{rem:vLp}). 
\end{remark}

\section{Compactness for the emergent island}\label{sec:emergent}

This section is dedicated to the proof of Theorem~\ref{thm:main2general}. The proof is easier than for the evanescent island because the Hodge decomposition is simpler and we do not need to estimate harmonic functions. The important informations that we need is that $\sqrt b^{-1}$ belongs to $L^q_{\loc}(\Omega)$ for some $q>2$ and that $C^\infty_{c}(\Omega)$ is dense in $X_{b}(\Omega)$, both properties coming from Definition~\ref{ass:b}, see Appendix~\ref{app:limit}. However, there is a new difficulty when we do not assume $\Weps\subset \Omega$: we have to infer from Hausdorff convergence that the limit of functions in $H^1_{0}(\Weps)$ satisfies the Dirichlet boundary condition on $\partial\Omega$.

Let $D\subset \R^2$ be a simply connected smooth open domain such that $\Omega, \Weps\subset D$ for all $\eps>0$. We extend $\weps$ by zero to $D$. As before, there exists $\omega\in L^{\infty}(\R_+\times D)$ such that $\weps\rightharpoonup^{\ast} \omega$ in $L^{\infty}(\R_+\times D)$ up to passing to a subsequence. From the elementary identity
\begin{equation*}
 \beps-b=\sqrt{\beps}\sqrt{b}(\sqrt{\beps}+\sqrt{b})\left(\frac{1}{\sqrt{\beps}}-\frac{1}{\sqrt{b}}\right)
\end{equation*}
and the uniform $L^{\infty}$-bounds from Definition~\ref{ass:b} and Assumption~\ref{ass:emergent},
we infer that $\beps\rightarrow b$ strongly in $L_{\loc}^p(\Omega)$ for all $p\in [1,\infty)$. We also have that $\beps\weps$ converges weakly star to $b\omega$ in $L^{\infty}(\R_+;L_{\loc}^p(\Omega))$ for all $p\in [1,\infty)$.

Next, we show the analogous of Lemma~\ref{lem:Dirichlet} for the emergent island.

\begin{lemma}\label{lem:convergence-psie}
Let $(\Weps,\beps)$ be a sequence of lakes without island (as in Definition~\ref{def-generallake}), which converges (in the sense of Assumption~\ref{ass:emergent}) to a punctured lake $(\Omega,b)$ (as in Definition~\ref{ass:b}). Given $\omega^0\in L^{\infty}(\Omega)$, let $(\veps,\weps)$ be a global weak solution given in Theorem~\ref{thm:existencenonsmooth} with initial vorticity $\omega^0$. As $\eps\rightarrow 0$, there exists a subsequence still denoted by $(\veps,\weps)$ such that
\begin{equation*}
\sqrt{\beps}\veps \rightarrow \sqrt{b}v \qquad \text{in} \quad L_{\loc}^2(\R_+;L^2(\Omega)),
\end{equation*}
where $v$ is such that $\curl v=b\omega$ in distributional sense and 
\begin{equation*}
 \diver(bv)=0, \quad \text{in} \quad \Omega, \qquad (bv)\cdot \mb n=0 \quad \text{on} \quad \partial\Omega,
\end{equation*}
in weak sense, see \eqref{imperm2}. In particular, there exists $\psi\in L^{\infty}(\R_{+};X_b(\Omega))$ such that 
\begin{equation*}
 v=\frac{1}{b}\nabla^{\perp}\psi.
\end{equation*}
Moreover, $v\in L^{\infty}_{\loc}(\R_+;L_{\loc}^p(\Omega))$ with $\frac{1}{p}=\frac{1}{2}+\frac{1}{q}$, where $q$ as defined in Definition~\ref{ass:b}.
If further $a_0\in[0,1)$, then $v\in L^{\infty}(\R_{+};L^p(\Omega))$.
\end{lemma}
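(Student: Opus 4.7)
The plan is to mirror the proof of Lemma~\ref{lem:Dirichlet} in the simpler no-island setting. Since $(\Weps,\beps)$ carries no island, Proposition~\ref{prop:BS} collapses to $\veps=\beps^{-1}\nabla^\perp\psi_\eps^0$ with $\psi_\eps^0\in X_\eps$ the unique solution of \eqref{eq:elliptic0} with right-hand side $\beps\weps$, so the compactness analysis reduces to a single elliptic profile without any harmonic correction. By Lemma~\ref{lem:elliptic}, $\psi_\eps^0$ is uniformly bounded in $L^\infty(\R_+;X_\eps\cap H^1_0(\Weps))$, and as in Lemma~\ref{lem:Dirichlet} the continuity equation together with density of $C_c^\infty(\Weps)$ in $X_\eps$ (Proposition~\ref{prop density}) upgrades this to a uniform $W^{1,\infty}(\R_+;X_\eps\cap H^1_0(\Weps))$ bound. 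Extending $\psi_\eps^0$ by zero to a fixed ball $D$ containing $\Omega\cup\bigcup_\eps\Weps$, Aubin--Lions yields, up to a subsequence, $\psi_\eps^0\rightharpoonup^*\psi$ in $W^{1,\infty}(\R_+;H^1_0(D))$ and $\psi_\eps^0\to\psi$ strongly in $C([0,T];L^2(D))$ for every $T$.

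The key new feature compared to Lemma~\ref{lem:Dirichlet} is that $\Weps\not\subset\Omega$ in general, so the Dirichlet condition on $\partial\Omega$ in the limit must be recovered through the Mosco convergence $H^1_0(\Weps)\to H^1_0(\Omega)$ induced by Hausdorff convergence (Remark~\ref{rem:gamma}); this ensures $\psi(t,\cdot)\in H^1_0(\Omega)$ for a.e.\ $t$. I would then identify the weak-$*$ limit of $\sqrt{\beps}^{-1}\nabla\psi_\eps^0$ in $L^\infty(\R_+;L^2(\Omega))$ as $\sqrt{b}^{-1}\nabla\psi$, exactly as in Lemma~\ref{lem:convergence-phie}, using the strong convergence $\sqrt{\beps}^{-1}\to\sqrt{b}^{-1}$ in $L^q_{\loc}(\Omega)$ from Assumption~\ref{ass:emergent}; hence $\psi\in L^\infty(\R_+;X_b(\Omega))$. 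Passing to the limit in \eqref{eq:psi0eps} against any $\varphi\in C_c^\infty(\R_+\times\Omega)$, which lies in $C_c^\infty(\R_+\times\Weps)$ for small $\eps$ by Hausdorff convergence, gives $\diver(b^{-1}\nabla\psi)=b\omega$; setting $v:=b^{-1}\nabla^\perp\psi$ then automatically delivers $\diver(bv)=0$, the tangency condition \eqref{imperm2}, and $\curl v=b\omega$ in $\mathcal D'(\R_+\times\Omega)$.

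For the strong convergence, I would test \eqref{eq:psi0eps} against $\psi_\eps^0$ itself on $(0,T)$ (legitimate once more by density of $C_c^\infty(\Weps)$ in $X_\eps$), obtaining
\begin{equation*}
\int_0^T\!\!\int_{\Weps}\beps^{-1}|\nabla\psi_\eps^0|^2\,\dd x\,\dd t=-\int_0^T\!\!\int_{\Weps}\beps\weps\,\psi_\eps^0\,\dd x\,\dd t.
\end{equation*}
The right-hand side converges to $-\int_0^T\!\int_\Omega b\omega\,\psi\,\dd x\,\dd t$ by strong $L^2$ convergence of $\psi_\eps^0$ and weak-$*$ convergence $\beps\weps\rightharpoonup^* b\omega$; testing the limit equation against $\psi$ itself (using density of $C_c^\infty(\Omega)$ in $X_b(\Omega)$, Proposition~\ref{prop density}) yields the same value, so norms converge, and weak plus norm convergence gives strong $L^2_{\loc}(\R_+;L^2(\Omega))$ convergence of $\sqrt{\beps}^{-1}\nabla\psi_\eps^0$. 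Multiplying by $\sqrt{\beps}$, which converges in $L^p_{\loc}(\Omega)$, then produces $\sqrt{\beps}\veps\to\sqrt{b}\,v$ in $L^2_{\loc}(\R_+;L^2(\Omega))$. Finally, writing $v=\sqrt{b}^{-1}\bigl(\sqrt{b}^{-1}\nabla^\perp\psi\bigr)$ and applying H\"older with $\sqrt{b}^{-1}\in L^q_{\loc}(\Omega)$ for $q\in[2,4/a_1)$ (Remark~\ref{rem:b}) yields $v\in L^\infty(\R_+;L^p_{\loc}(\Omega))$, which upgrades to a global bound when $a_0\in[0,1)$ since then $\sqrt{b}^{-1}\in L^q(\Omega)$. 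The main obstacle is the recovery of the Dirichlet boundary condition under Hausdorff convergence with $\Weps\not\subset\Omega$, which is handled cleanly by Mosco convergence; everything else is a streamlined rerun of the evanescent-island analysis.
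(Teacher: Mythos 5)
Your proposal is correct and follows essentially the same route as the paper's proof: collapse of the Hodge decomposition to a single Dirichlet profile, the $W^{1,\infty}$-in-time bound obtained as in Lemma~\ref{lem:Dirichlet}, extension to a fixed ball $D$ with the Dirichlet condition on $\partial\Omega$ recovered via $\gamma$/Mosco convergence, identification of the weighted gradient limit as in Lemma~\ref{lem:convergence-phie}, and the energy-identity argument (testing against $\psi_\eps^0$ and, by density of $C_c^\infty(\Omega)$ in $X_b(\Omega)$, against $\psi$) to upgrade weak to strong convergence. The only cosmetic difference is your explicit invocation of Aubin--Lions where the paper simply states the compactness; the substance is identical.
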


\begin{proof}
We recall from Lemma~\ref{lem:elliptic} and Corollary~\ref{lem:veps1} that $\sqrt{\beps}\veps\in L^{\infty}(\R_+;L^2(\Weps))$ uniformly bounded and $\veps=\beps^{-1}\nabla^{\perp}\psi_{\eps}$ with $\psi_{\eps}\in L^{\infty}(\R_{+};X_{\eps})$ unique solution to \eqref{eq:elliptic} with $f_{\varepsilon}=\beps\weps$. We reproduce the argument in the proof of Lemma~\ref{lem:Dirichlet} to state that 
$\psi_\eps^0\in W^{1,\infty}(\R_+;X_{\eps})$ uniformly bounded. Upon extending by zero, by Poincar\'e inequality in $D$ we have $\psi_{\eps}^0\in W^{1,\infty}(\R_+;H_0^1(D))$ uniformly bounded. 
Thus, there exists $\psi\in W^{1,\infty}(\R_+;H^1_{0}(D))$ such that up to extracting subsequences,
\begin{equation*}
 \psi_{\eps}^0\rightharpoonup \psi \quad \text{in} \quad L^{\infty}_{\loc}(\R_{+};H_0^1(D)), \qquad \psi_{\eps}^0\rightarrow \psi \quad \text{in} \quad L^{\infty}_{\loc}(\R_{+};L^2(D)).
\end{equation*}
As the sequence of domains $\Weps$ converges in Hausdorff sense to $\Omega$, it $\gamma$-converges to $\Omega$ and therefore $\psi\in W^{1,\infty}_{\loc}(\R_{+};H_0^1(\Omega))$, see \cite[App. C]{GV-Lacave1} or \cite[App. B]{LNP}.

Moreover, there exists $f\in W^{1,\infty}(\R_+;L^2(\Omega))$ such that 
\begin{equation*}
 \frac{1}{\sqrt{\beps}}\nabla\psi_\eps^0\rightharpoonup^{\ast} f \quad \text{in} \quad W^{1,\infty}(\R_+;L^2(D)),
\end{equation*}
where we can identify $f=\frac{1}{\sqrt{b}}\nabla\psi^0$ a.e. in $\R_+\times\Omega$ proceeding as in the proof of Lemma~\ref{lem:convergence-phie}.
Therefore, $\psi^0\in W^{1,\infty}(\R_+;X_b(\Omega))$. 

Next, we notice that 
\begin{equation*}
\left\langle\diver\left(\frac{1}{\beps}\nabla\psi_\eps\right), \varphi\right\rangle=\left\langle \beps\weps, \varphi\right\rangle
\end{equation*}
for all $\varphi\in C_c^{\infty}(\R_+\times \Omega)$ and $\eps$ sufficiently small as consequence of \eqref{eq:elliptic0} and (\emph{1}) Assumption~\ref{ass:emergent}. Exploiting (\emph{2}) Assumption~\ref{ass:emergent} and the weak $L^2$-convergence of $\sqrt{\beps}^{-1}\nabla\psi_{\eps}$ to $\sqrt{b}^{-1}\nabla\psi$ and the weak-$\ast$ convergence of $b_{\varepsilon}\omega_{\varepsilon}$ we pass to the limit as $\eps\to0$ yielding
\begin{equation*}
\left\langle\diver\left(\frac{1}{b}\nabla\psi\right), \varphi\right\rangle=\left\langle b\omega, \varphi\right\rangle
\end{equation*}
in $\mathcal{D}'(\R_+\times \Omega)$. Defining $v:=b^{-1}\nabla^{\perp}\psi$, we recover that $\curl(v)=b\omega$ in distributional sense. By virtue of the density of $C_c^{\infty}(\Omega)$ in $X_{b}(\Omega)$, one obtains for any $T>0$ that
\begin{equation*}
\int_{0}^T\int_{\Omega}\frac{1}{b}|\nabla\psi|^2\dd x \dd t=-\int_{0}^T\int_{\Omega}b\omega\psi \dd x \dd t.
\end{equation*}
Finally,
\begin{multline*}
\int_{0}^T\int_{\Weps}\frac{1}{\beps}\left|\nabla\psi_\eps\right|^2\dd x \dd t=-\int_0^T\int_{\Omega}\beps\weps\psi_{\eps}\dd x \dd t\\
\rightarrow -\int_{0}^T\int_{\Omega}b\omega\psi \dd x \dd t=\int_{0}^T\int_{\Omega}\frac{1}{b}|\nabla\psi|^2\dd x \dd t.
\end{multline*}
 which implies that $\sqrt{\beps}\veps$ converges strongly to $\sqrt{b}v$ in $L_{\loc}^2(\R_+\times D)$. 
 
Moreover, due to $bv =\nabla^\perp \psi$ where $\psi \in H_0^1(\Omega)$, one may prove that \eqref{imperm} holds true with $v(t,\cdot)$ for a.e. $t\in \R_{+}$, up to approximate $H_0^1(\Omega)$ functions by functions in $C^\infty_{c}(\Omega)$. We can also prove that the initial data $v^0$ satisfies the Hodge decomposition and \eqref{imperm}.
 
Definition~\ref{ass:b} and H\"older's inequality yield that $v\in L^{\infty}(\R_+;L_{\loc}^p(\Omega))$ with $1/p=1/2+1/q$. If $\sqrt{b}^{-1}\in L^q(\Omega)$, i.e. $a_0 <1$, then $v\in L^{\infty}(\R_+;L^p(\Omega))$.
\end{proof}

We are now in position to prove Theorem~\ref{thm:main2general} for the emergent island.
\begin{proof}[Proof of Theorem~\ref{thm:main2general}]
We notice that Lemma~\ref{lem:convergence-psie} states that the Hodge decomposition $v=\frac{1}{b}\nabla^{\perp}\psi$ with $\psi\in L^{\infty}(0,T;X_b(\Omega))$ holds, $\curl v=b\omega$ in distributional sense and
\begin{equation*}
\diver(bv)=0 \quad \text{in} \quad \Omega, \qquad (bv)\cdot\mb n=0\quad \text{on} \quad \partial \Omega,
\end{equation*}
in weak sense \eqref{imperm2}. It remains to pass to the limit in \eqref{eq:vorticity}. To that end, we recall that (\emph{i}) in Definition~\ref{ass:b} yields that $\Weps$ converges to $\Omega$ in Hausdorff sense. Hence, any $\varphi\in C_c^{\infty}(\R_+\times{\Omega})$ satisfies $\supp(\varphi)\subset \R_+\times\Weps$ for $\eps$ sufficiently small. Exploiting the strong $L_{\loc}^2$-convergence of $\sqrt{\beps}\veps$, the weak-$\ast$ $L^{\infty}$-convergence of $\weps$ and the weak $L_{\loc}^{p}$-convergence of $\beps\weps$, we pass to the limit in \eqref{eq:vorticity}. This ends the proof of Theorem~\ref{thm:main2general}.
\end{proof}

\begin{remark}
 In the previous proof, it is not obvious to show that \eqref{eq:vorticity} is valid for test functions supported up to the boundary $\varphi\in C_c^{\infty}(\R_+\times\overline{\Omega})$ because it is not clear how to extend them as a test function in $C_c^{\infty}(\R_+\times\overline{\Weps})$. This is easy if $\Weps\subset \Omega$ for all $\eps$ sufficiently small. 
 
 Nevertheless, such an extension is not very interesting because we have already obtained the global existence of the limit system for such test functions in Theorem~\ref{thm:maingeneral}. Moreover, it was proved that in \cite[Prop. A.5]{LNP}, that, if the solution is regular enough, it suffices to verify the equation for test functions in $C_c^{\infty}(\R_+\times \Omega)$ to prove that the equation is also true for test functions in $C_c^{\infty}(\R_+\times\overline{\Omega})$ and that the solution is unique. Of course, the main challenge consists in proving suitable regularity properties, for instance by adapting the Calder\'on-Zygmund inequalities for lakes with degenerated islands.
\end{remark}

\section{Velocity formulation}\label{sec:velocity}

This section discusses the $\eps$-limit for the velocity formulation of \eqref{eq:lake-vel}. To recover the asymptotic equation satisfied by the limit velocity field $v$ on the limit lake, one may consider two strategies,
\begin{itemize}
\item pass to the limit in the velocity formulation;
\item pass to the limit in the vorticity formulation, see Theorem~\ref{thm:maingeneral} and Theorem~\ref{thm:main2general} respectively, then recover the velocity formulation from the vorticity formulation.
\end{itemize}
Due to the degeneracy of $b$ in $0$, both strategies face major mathematical difficulties when considered on $\Omega$. Therefore, we first provide a rigorous result for the limit equation for $v$ posed on $\Oo$, namely with the lake reduced to the support of $b$, i.e. where the depth of the lake is positive, and second we discuss possible strategies leading to a velocity formulation on $\Omega$.

We limit our consideration to the case of a collapsing island including the creation of a point vortex, the case of the emergent island can be treated similarly with minor modifications. 

\subsection{Velocity formulation on the support of the depth function}
On the lake $(\Oo, b)$, we introduce the following notion of weak solutions. 
\begin{definition}\label{defi:velocity}
Given the lake $(\Omega, b)$ of Definition~\ref{ass:b}, let $v^0$ be a vector field such that $\sqrt{b}v^0\in L^2(\Omega)$ 
\begin{equation*}
 \diver(bv^0)=0 \quad \text{in} \quad \Omega, \qquad bv^0\cdot\mb n=0 \quad \text{on} \quad \partial\Omega,
\end{equation*}
in weak sense, see \eqref{imperm}, and $\curl(v^0)\in \mathcal{D}'(\Oo)$ with $b^{-1}\curl(v^0)\in L_{\loc}^{\infty}(\Oo)$. A velocity field $v$ is called a global weak solution to the velocity formulation of \eqref{eq:lake-vel} in $(\Oo,b)$ if 
\begin{enumerate}
 \item $\curl(v)\in\mathcal{D}'(\R_+\times\Oo)$ with $b^{-1}\curl(v)\in L_{\loc}^{\infty}(\R^{+}\times \Oo)$ and $\sqrt{b}v\in L^{\infty}(\R_+;L^2(\Omega))$;
 \item $\diver(bv)=0$ in $\Omega$ and $bv\cdot\mb n=0$ in $\partial\Omega$ in weak sense;
 \item the velocity formulation of \eqref{eq:lake-vel} is verified in distributional sense, namely for all $\Phi\in C_c^{\infty}([0,\infty)\times\Oo)$ such that $\diver(\Phi)=0$ it holds that
 \begin{equation}\label{eq:velocity}
 \int_0^{\infty}\int_{\Omega}v\cdot\partial_t\Phi+\left(bv\otimes v\right):\nabla\left(\frac{\Phi}{b}\right) \dd x \dd t+\int_{\Omega}v^0\Phi(0)\dd x=0.
 \end{equation}
\end{enumerate}
\end{definition}
For the scenario of the evanescent island, we obtain the following.
\begin{theorem}\label{thm:velocity}
Let $(\Weps,\beps)$ be a sequence of lakes with one non-degenerated island (as in Definition~\ref{def-generallake}), which converges (in the sense of Assumption~\ref{ass:evanescent}) to a punctured lake $(\Omega,b)$ (as in Definition~\ref{ass:b}). Further, we assume $\beps\in W_{\loc}^{1,\infty}(\Omega_{\varepsilon})$ and $\beps\to b$ in $W_{\loc}^{1,\infty}(\Oo)$. Let $(\veps,\weps)$ be a sequence of global weak solutions to \eqref{eq:velocity} on $(\Weps,\beps)$ with initial vorticity $\beps^{-1}\curl v^0_{\varepsilon}=\omega^0$ and circulation $\gamma$.

As $\eps$ goes to $0$, there exists a subsequence $\sqrt{\beps}\veps$ which converges strongly to $\sqrt{b}v$ in $L_{\loc}^2(\R_+;L^2(\Omega))$ where
\begin{equation*}
 v=\frac{1}{b}\nabla^{\perp}\psi^0+\frac{1}{b}\alpha\nabla^{\perp}\psi^1 \quad \text{a.e. in} \, \, \R_{+}\times \Omega 
\end{equation*}
is a global weak solution to \eqref{eq:velocity} in $(\Oo,b)$ with initial data $v^0$ and in particular $\curl(v)=b\omega$ in $\mathcal{D}'(\R_+\times \Oo)$. 
\end{theorem}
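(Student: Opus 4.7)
The plan is to combine the compactness already proved in Proposition~\ref{prop:puncturedlake} with a pointwise argument on compact subsets of $\Oo$, where the degeneracy of the depth is not seen. First, I invoke Proposition~\ref{prop:puncturedlake} along a subsequence to obtain $\sqrt{\beps}\veps\to \sqrt{b}v$ strongly in $L^2_{\loc}(\R_{+};L^2(\Omega))$, the weak-$\ast$ convergence $\weps\wstar\omega$ in $L^\infty(\R_{+}\times\Omega)$, the Hodge decomposition \eqref{eq:Hodgelimit}, as well as the tangency/divergence conditions for $v$ and the identity $\curl v=b\omega$ in $\mathcal{D}'(\R_{+}\times\Oo)$. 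In particular, the conditions (1) and (2) of Definition~\ref{defi:velocity} for $v$ are already at hand.

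The core task is to pass to the limit in \eqref{eq:velocity}. Let $\Phi\in C_{c}^\infty([0,\infty)\times\Oo)$ with $\diver\Phi=0$ and set $K:=\supp\Phi\subset [0,\infty)\times\Oo$. Since $K$ is compact in $\Oo$ and $\Weps\to\Oo$ in the Hausdorff sense, $K\subset[0,\infty)\times\Weps$ for $\eps$ small enough and hence $\Phi$ is an admissible test function for the velocity formulation on $(\Weps,\beps)$. Moreover, by (\emph{3}) of Assumption~\ref{ass:evanescent} there exists $\theta_{K}>0$ and $\eps_{K}>0$ with $\beps\ge\theta_{K}$ on the spatial projection of $K$ for all $\eps\in(0,\eps_{K}]$. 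Combined with the $W^{1,\infty}_{\loc}(\Oo)$ convergence assumed in the theorem, this yields $\nabla(\Phi/\beps)\to\nabla(\Phi/b)$ uniformly on $K$. On the other hand, $\beps\ge\theta_{K}$ on the support of $\Phi$ upgrades the strong $L^2_{\loc}$ convergence of $\sqrt{\beps}\veps$ to the strong convergence $\veps\to v$ in $L^2_{\loc}(\R_{+};L^2(K))$. This handles the linear term $\int v_{\eps}\cdot\partial_{t}\Phi\,\dd x\dd t$ and, by the analogous argument applied to initial data in Proposition~\ref{prop:puncturedlake}, the initial data term $\int v_{\eps}^{0}\Phi(0)\,\dd x$.

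The delicate term is the nonlinear one. I rewrite it as
\begin{equation*}
\int_{0}^\infty\int_{\Weps}(\beps\veps\otimes\veps):\nabla\Bigl(\frac{\Phi}{\beps}\Bigr)\,\dd x\dd t=\int_{0}^\infty\int_{\Weps}(\sqrt{\beps}\veps\otimes\sqrt{\beps}\veps):\nabla\Bigl(\frac{\Phi}{\beps}\Bigr)\,\dd x\dd t.
\end{equation*}
The strong $L^2_{\loc}(\R_{+};L^2(\Omega))$ convergence of $\sqrt{\beps}\veps$ to $\sqrt{b}v$ implies the strong $L^1_{\loc}(\R_{+};L^1(\Omega))$ convergence of the tensor product $\sqrt{\beps}\veps\otimes\sqrt{\beps}\veps$ to $\sqrt{b}v\otimes\sqrt{b}v=bv\otimes v$ by the standard weak--strong pairing. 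Combined with the uniform convergence of $\nabla(\Phi/\beps)$ on $K$ established above, one passes to the limit and obtains \eqref{eq:velocity} for $v$. This is the main obstacle and the reason for the extra regularity assumption $\beps\to b$ in $W^{1,\infty}_{\loc}(\Oo)$: without control of $\nabla\beps$ one cannot make sense of the limit of $\nabla(\Phi/\beps)$, even away from the origin.

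Finally, I read off from the Hodge decomposition \eqref{eq:Hodgelimit} the stated representation of $v$, and from $\curl v=b\omega\in L^\infty(\R_{+}\times\Oo)$ the regularity requirement $b^{-1}\curl v\in L^\infty_{\loc}(\R_{+}\times\Oo)$ of Definition~\ref{defi:velocity}. The argument for the initial data proceeds identically on a compact neighborhood of $\supp\Phi(0,\cdot)$, using $\sqrt{\beps}\veps^{0}\to\sqrt{b}v^{0}$ in $L^2(\Omega)$ and the lower bound $\beps\ge\theta$ on that compact set. This completes the construction of $v$ as a global weak solution in the sense of Definition~\ref{defi:velocity}.
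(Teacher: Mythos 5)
Your proposal is correct and follows essentially the same route as the paper: invoke the compactness results of Section~\ref{sec:evanescent} for the vorticity formulation, use Hausdorff convergence to make $\Phi$ an admissible test function on $\Weps$, and pass to the limit in the nonlinear term by pairing the strong $L^2$ convergence of $\sqrt{\beps}\veps$ with the uniform convergence of $\nabla(\Phi/\beps)$ on compact subsets of $\Oo$ (the paper writes this factor out as $\frac{\nabla\Phi}{\beps}-\frac{\Phi}{\beps}\otimes\frac{\nabla\beps}{\beps}$, which is the same thing). The only step worth making explicit is the one the paper spends a sentence on: to invoke Proposition~\ref{prop:puncturedlake} you must first note that the given velocity-formulation solutions are also vorticity-formulation solutions in the sense of Theorem~\ref{thm:existencenonsmooth}, which follows from \cite{LNP} together with the uniqueness of the reconstruction of $\veps$ from $\weps$ and $\gamma$ in Proposition~\ref{prop:BS}.
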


Existence of a global weak solution to the velocity formulation on $(\Weps,\beps)$ with initial data $(\weps^0,\veps^0)$ under consideration follows from \cite[Theorem 1.6]{LNP}.

\begin{proof}
We notice that for both $\eps>0$ and the limit lake, the velocity $\veps$ and $v$ respectively are uniquely determined by the vorticity $\weps$ and $\omega$ respectively and the circulation $\gamma$, see Proposition~\ref{prop:BS} and Remark~\ref{rm:univ0}. Moreover, it follows from \cite[Theorem 1.6]{LNP} that $(\veps,\weps)$ is a weak solution of \eqref{eq:vorticity}. Hence, Theorem~\ref{thm:maingeneral} yields that $\sqrt{\beps}\veps$ converges strongly to $\sqrt{b}v$ in $L_{\loc}^2(\R_+;L^2(\Omega))$ where $v$ is given by \eqref{eq:Hodgelimit} and $bv$ satisfies the divergence free condition \eqref{imperm} in weak sense. Next, we pass to the limit in \eqref{eq:velocity}.
The Hausdorff convergence of $\Weps$ to $\Oo$ implies that given $\Phi\in C_c^{\infty}(\R_+\times\Oo)$, one has $\supp(\Phi)\subset \R_+\times\Weps$ for all $\eps$ sufficiently small. Therefore, \eqref{eq:velocity} is satisfied in particular for any $\Phi\in C_c^{\infty}(\R_+\times\Oo)$ and $\eps$ sufficiently small. In particular, $\diver\Phi=0$ on $\Weps$. Next, $\veps$ converges strongly to $v$ in $L^2_{\loc}(\R_{+};L_{\loc}^p(\Oo))$ for all $p\in [1,2)$ as 
\begin{equation*}
\veps=\frac{1}{\sqrt{\beps}}\sqrt{\beps}\veps
\end{equation*}
where $\sqrt{\beps}\veps$ converges strongly in $L_{\loc}^{2}(\R_+;L^2(\Omega))$ while $\sqrt{\beps}^{-1}$ converges strongly to $\sqrt{b}^{-1}$ in $L_{\loc}^{q}(\Oo)$. For any divergence free $\Phi\in C_c^{\infty}(\R_+\times\Oo)$ we conclude that
\begin{equation*}
\int_{0}^{\infty}\int_{\Omega}\veps\partial_t\Phi\dd x \dd t\rightarrow \int_{0}^{\infty}\int_{\Omega}v \partial_t\Phi\dd x \dd t
\end{equation*}
as $\eps\rightarrow 0$. For the term involving the initial data, it suffices to notice that $\sqrt{\beps}\veps^0\in L^2(\Weps)$ converges strongly in $L^2_{\loc}(\Oo)$, hence $\veps^0=\sqrt{\beps}^{-1}\sqrt{\beps}\veps^0$ converges strongly in $L_{\loc}^p(\Oo )$ for $p\in [1,2)$. It remains to pass to the limit in the convective term
\begin{equation}\label{eq:nonlinear}
\int_0^{\infty}\int_{\Weps}\left(\sqrt{\beps}\veps\otimes\sqrt{\beps}\veps\right):\left(\frac{\nabla\Phi}{\beps}-\frac{\Phi}{\beps}\otimes\frac{\nabla\beps}{\beps}\right)\dd x \dd t
\end{equation}
which is possible by the $L^2$ convergence of $\sqrt{\beps}\veps$ and that
\begin{equation*}
\frac{\nabla\Phi}{\beps}-\frac{\Phi}{\beps}\otimes\frac{\nabla\beps}{\beps}\to \frac{\nabla\Phi}{b}-\frac{\Phi}{b}\otimes\frac{\nabla b}{b} \quad \text{in} \quad L^{\infty}_{\loc}(\Oo).
\end{equation*}
The proof is complete.
 \end{proof}

We notice that Theorem~\ref{thm:velocity} may also be proven by adapting the second strategy mentioned, namely to recover the velocity formulation from the vorticity formulation on the lake $(\Oo, b)$. 

\subsection{Formal velocity formulation on the limit lake}
The nonlinear term \eqref{eq:nonlinear} is in general not well-defined when considered on the limit lake $(\Omega,b)$ and for divergence-free test-functions $\Phi\in C_c^{\infty}(\R_+\times \Omega)$. Even, when only the circulation-free part of the velocity field is considered, this difficulty persists. For the respective problem for the $2D$ Euler equations, the asymptotic equation for the circulation-free part of the velocity field for the $2D$ Euler equations was derived in \cite{ILL1} exploiting regularity properties stemming from the explicit Biot-Savart law. For the lake equations, no explicit general representation of the kernel is known (see \cite{DekeyserVanS} for a result in this direction) and Calder\'on-Zygmund type inequalities are only proved for smooth lakes \cite{BM,LNP}. We recall that $v$ satisfies the Hodge decomposition \eqref{eq:Hodgelimit}. Let
\begin{equation*}
 \vreg:=\frac{1}{b}\nabla^{\perp}\psi^0+(\alpha-\gamma)\frac{1}{b}\nabla^{\perp}\psi^1, \qquad \vsing:=\gamma \frac{1}{b}\nabla^{\perp}\psi^1,
\end{equation*}
such that $v=\vreg+\vsing$. It follows from Theorem~\ref{thm:maingeneral} that $\vreg$ is circulation-free, more precisely 
\begin{equation}\label{eq:vreg}
 \diver(b\vreg)=0, \quad b\vreg\cdot\mb n=0, \quad \curl(\vreg)=b\omega \text{ in }\mathcal{D}'(\R_{+}\times \Omega).
\end{equation}
Therefore,
\begin{equation*}
 \curl(b\vreg)=2\nabla^{\perp}\sqrt{b}\cdot\sqrt{b}\vreg+b^2\omega\in L^p(\Omega),
\end{equation*}
with $1/p=1/2+1/q$, where $q$ as in Assumption~\ref{ass:evanescent}. As $\diver(b\vreg)=0$ and $\sqrt{b}\vreg \in L^{\infty}(\R_+;L^2(\Omega))$, the standard Calder\'on-Zygmund in $\Omega$ (assuming $\partial \Omega\in C^{1,1}$) allows us to state $b \vreg\in L^{\infty}(\R_+;W^{1,p}(\Omega))$. In particular, there exists $p\in(1,2)$ such that 
\begin{equation*}
 b\nabla\vreg\in L^{\infty}(\R_+;L^p(\Omega)), \quad \vreg\in L^{\infty}(\R_+;W_{\loc}^{1,p}(\Oo)).
\end{equation*}
On the other hand $\vsing$ is such that $\sqrt{b}\vsing\in L^2(\Omega)$ and
\begin{equation}\label{eq:vsing}
 \diver(b\vsing)=0, \quad b\vsing\cdot\mb n=0, \quad \curl(\vsing)=\gamma\delta_0 \text{ in }\mathcal{D}'(\R_{+}\times \Omega).
\end{equation}
These regularity properties are insufficient in order to recover the evolution equations for $\vreg$ or $v$. However, we provide a formal computation to obtain an asymptotic equation for $\vreg$ that is inspired by \cite[Section 5]{ILL1}. We observe that for $u,w$ smooth vector fields such that $\diver(bu)=\diver(bw)=0$ one has
\begin{equation*}
 \diver(bu\otimes w)=b(\nabla u)w=2b(\nabla u)^{asym}w+b(\nabla u)^Tw=bw^{\perp}\curl(u)+b(\nabla u)^Tw.
\end{equation*}
We compute,
\begin{align*}
 \diver(bu\otimes w+bw\otimes u)&=bw^{\perp}\curl(u)+bu^{\perp}\curl(w)+b(\nabla u)^Tw+b(\nabla w)^Tu\\
 &=bw^{\perp}\curl(u)+bu^{\perp}\curl(w)+b\nabla(u\cdot w).
\end{align*}
In particular, it follows from \eqref{eq:vreg} that
\begin{equation*}
 \diver(b\vreg\otimes \vreg)=(b\vreg)^{\perp}b\omega+\frac{b}{2}\nabla|\vreg|^2,
\end{equation*}
hence
\[
\curl\Big(\frac1b \diver(b\vreg\otimes \vreg)\Big) =\diver( \vreg b\omega).
\]
Note that the second contribution lacks a rigorous definition under the regularity properties considered here.
Taking into account \eqref{eq:vsing}, we formally infer
\begin{align*}
 \diver(b\vreg\otimes \vsing+&b\vsing\otimes \vreg)\\
 &=(b\vsing)^{\perp}\curl(\vreg)+(b\vreg)^{\perp}\curl(\vsing)+b\nabla(\vreg\cdot\vsing)\\
 &=(b\vsing)^{\perp}b\omega+\gamma(b\vreg)^{\perp}\delta_0+b\nabla(\vreg\cdot\vsing).
\end{align*}
In general, $\gamma(b\vreg(0))^{\perp}\delta_0$ lacks to be well-defined. Formally, one concludes that
\begin{equation*}
 \curl\left(\frac{1}{b}\diver\left(b\vreg\otimes \vsing+b\vsing\otimes \vreg\right)-\gamma\vreg(0)^{\perp}\delta_0\right)=\diver(\vsing b\omega).
\end{equation*}
It is then straightforward to compute
\begin{align*}
 0=&\partial_t(b\omega)+\diver(\vreg b\omega)+\diver(\vsing b\omega)\\
 =&\curl\Big(\partial_t\vreg+\frac{1}{b}\diver(b\vreg\otimes\vreg+b\vreg\otimes\vsing+b\vsing\otimes\vreg)\\
 &\qquad -\gamma\vreg(0)^{\perp}\delta_0\Big),
\end{align*}
amounting to \eqref{eq:continuity} for $(v=\vreg+\vsing,\omega)$.
Finally, one has
\begin{equation*}
 \partial_t \vreg+\vreg\cdot \nabla \vreg+\frac{1}{b}\diver(b\vsing\otimes\vreg+b\vreg\otimes\vsing)+\nabla p= \gamma \vreg(0)^{\perp}\delta_0.
\end{equation*}
As $\vreg(0)$ is not well-defined unless additional regularity, as available for smooth lakes \cite{BM, LNP}, is proven, the previous computations are not rigorous. Note that $\vreg\in W_{\loc}^{1,p}(\Omega)$ with $p>2$ would be sufficient. 

An alternative approach consists in replacing the weak formulation \eqref{eq:velocity} by the weak formulation of \eqref{eq:lake-vel} for test-functions $\Phi\in C^{\infty}(\Omega)$ such that $\diver(b\Phi)=0$ and $b\Phi\cdot \mb n=0$.
More precisely,
\begin{equation}\label{eq:velocityb}
 \int_0^\infty\int_\Omega(bv)\cdot\partial_t\Phi+(bv\otimes v):\nabla\Phi+ p\diver(b\Phi) \dd x \dd t+\int_{\Omega}(bv^0)\cdot \Phi(0)\dd x=0.
\end{equation}
Note that the pressure term cancels out for the chosen class of test-functions.
The nonlinear term in \eqref{eq:velocityb} is well-defined for $v$ of finite energy, in particular for $v$ as in Theorem~\ref{thm:maingeneral}. Nevertheless, \eqref{eq:velocityb} comes with two major flaws for our asymptotic analysis. First, reconstructing \eqref{eq:velocityb} from the vorticity formulation \eqref{eq:vorticity} seems to be difficult in the context of low regularity. Second, if one aims to perform the $\eps$-limit by writing \eqref{eq:velocityb} for $\eps>0$ and passing to the limit, one notes that the admissibility of the test-function is highly sensitive to both the geometry $\Weps$ and the depth $\beps$ through the incompressibility condition $\diver(\beps\Phi)=0$. Thus, one is led to consider approximate weak solutions for $\eps>0$ with several error terms. These appear in particular for the pressure term 
\begin{equation*}
 \int_0^\infty\int_{\Omega}p_{\eps}\diver(\beps\Phi)\dd x\dd t
\end{equation*}
that requires an accurate control. Due to the lack of uniform ellipticity for the equation
\begin{equation*}
 \diver\diver(\beps\veps\otimes\veps)=\diver(\beps\nabla p_\eps),
\end{equation*}
uniform estimates for the pressure are hard to obtain. While this strategy does not suit well for the stability problem considered here, in \cite{AT} the author relies on the viscous version of \eqref{eq:velocityb} to prove existence of weak solutions to the viscous lake equations posed on a simply connected lake $(\Omega,b)$ with $b$ power law type Muckenhoupt weight.

To conclude, it remains therefore an interesting and challenging question to derive the asymptotic velocity formulation on the lake $(\Omega,b)$ and test-functions whose support includes $0$.

\appendix

\section{Density of smooth functions}\label{app:limit}

The aim of this appendix is to show that $C_c^{\infty}(\Omega)$ and $C_c^{\infty}(\Weps)$ are dense in $X_b(\Omega)$ and $X_{\beps}(\Weps)$ respectively. It was shown in \cite{LNP} that $C_c^{\infty}(\Weps)$ is dense in $X_{\beps}(\Weps)$ provided that $(\Weps,\beps)$ is smooth. Firstly, we extend the proof of \cite{LNP} to lakes without degenerated island as in Definition~\ref{def-generallake}. Secondly, we infer the respective density property in $X_b$ for the punctured lake, as in Definition~\ref{ass:b}, for which we need to also deal with the degeneracy of $b$ in $0$.

As already pointed out in Section~\ref{sec:main} the density of $C_c^{\infty}(\Omega)$ in $X_{b}(\Omega)$ is important to ensure the existence and uniqueness of the Hodge decomposition. Even if the existence could also be derived by compactness, which is enough to prove Theorem~\ref{thm:maingeneral}, we deem that it is interesting to consider a domain where we have the uniqueness of the linear elliptic problem. Moreover, in Section~\ref{sec:emergent}, we use this density in the compactness argument, which allows to avoid the comparison between $X_{\varepsilon}$ and $X$ norms (see Remark~\ref{rem:genass}).

To encompass both type of lakes and to provide a generalization suitable for future works, we propose here a density result for the following lakes:
the lake $(\Omega,b)$ is a lake with $N_{nd}$ non degenerated islands $\partial\cal{I}^k$ ($N_{nd}\in \N$) and $N_{d}$ degenerated islands localized at $x_{p}$ ($N_{d}\in \N$)
\begin{enumerate}
\item $ \displaystyle \Omega := \widetilde{\Omega} \setminus \Bigl( \bigcup_{k=1}^{N_{nd}} \cal{I}^k \Bigl)$,
where $\widetilde \Omega$ is open bounded subset of $\R^2$ and $\cal{I}^k$ are disjoint compact simply connected subsets of $\widetilde \Omega$, such that $\partial\widetilde \Omega$, $\partial\cal{I}^k$ is a non trivial Jordan curve (i.e. not reduced to points);
\item $x_{p}\in \Omega$ for all $p\in \{1, \dots, N_{d}\}$;
\item $b\in L^\infty(\Omega, \R^+)$ such that, for any compact set $\displaystyle K\subset \Omega\setminus \bigcup_{p=1}^{N_{d}} \{x_{p}\} $, there exists positive number $\theta_K$ such that $ b(x) \ge \theta_K$ on $K$;
\item there are small neighborhoods $\mathcal{O}^0$, $\mathcal{O}^k$ and $\mathcal{O}_{p}$ of $\partial\widetilde{\Omega}$, $\partial\cal{I}^k$ and $\{x_{p}\}$ respectively, such that, for $k\in \{0, \dots, N_{nd}\}$ and $p\in \{1, \dots, N_{d}\}$,
\begin{equation*}
b(x)=c(x)\left[d(x)\right]^{a_k} \quad \text{ in } \mathcal{O}^k\cap \Omega, \qquad b(x)=c(x)|x-x_{p}|^{\tilde a_p} \quad \text{ in } \mathcal{O}_p\cap \Omega,
\end{equation*}
where $c(x)\ge \theta>0$ for all $x\in \Omega$, $d(x)=\dist(x,\partial\Omega)$, $a_{k}\geq 0$ and $\tilde a_{p} >0$;
\item if $b$ vanishes on the boundaries $\partial\widetilde \Omega$, $\partial\cal{I}^k$ (i.e. if $a_{k}>0$) then the respective boundary is a $C^1$ Jordan curve.
\end{enumerate}

Of course, the case $N_{d}=0$ or $N_{nd}=0$ is allowed. The aim of this appendix is to prove the following proposition.

\begin{proposition}\label{prop density} 
Under the assumptions listed above for the lake $(\Omega,b)$, the set 
\[
\Big\{ \varphi \in C^\infty_c(\Omega), \ \nabla \varphi =0 \text{ in }\bigcup_{p=1}^{N_{d}} B(x_{p},R) \text{ for some }R>0 \Big\}
\]
is dense in 
\[
 X_b(\Omega)=\Big\{\psi\in H_0^1(\Omega) \quad : \quad \frac{1}{\sqrt{b}}\nabla\psi\in L^2(\Omega)\Big\}
\]
w.r.t. the norm $\|\cdot \|_{X_b(\Omega)}= \|\frac{\nabla \cdot}{\sqrt{b}}\|_{L^2(\Omega)}$.
\end{proposition}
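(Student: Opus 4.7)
The plan is to establish the density by a two-step reduction. First, I will approximate a given $\psi \in X_b(\Omega)$ by a function $\psi_\delta \in X_b(\Omega)$ that is locally constant in a neighborhood of each degenerate point $x_p$. Second, I will reduce to the density of $C_c^\infty$ in $X_b$ on a lake without degenerate islands, which will follow from the adaptation of the density proof of \cite[Lem.~2.1]{LNP} to possibly vanishing topography at the non-degenerate boundary, using the weighted Hardy inequality of Lemma~\ref{HardyLem}.

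For the first step, I will fix $\delta > 0$ small enough that the balls $B(x_p, 2\delta)$ are pairwise disjoint and contained in $\Omega$, introduce cutoffs $\chi_\delta^p \in C_c^\infty(B(x_p, 2\delta))$ with $\chi_\delta^p \equiv 1$ on $B(x_p, \delta)$ and $|\nabla\chi_\delta^p| \leq C/\delta$, and set $c_{p,\delta}$ to be the mean of $\psi$ over the annulus $A_\delta^p := B(x_p, 2\delta) \setminus \overline{B(x_p, \delta)}$. The approximant
\begin{equation*}
\psi_\delta := \psi - \sum_p \chi_\delta^p (\psi - c_{p,\delta})
\end{equation*}
lies in $H_0^1(\Omega)$ and equals the constant $c_{p,\delta}$ on each $B(x_p, \delta)$. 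The error $\|\psi - \psi_\delta\|_{X_b}^2$ splits into two pieces: a contribution $\int b^{-1}(\chi_\delta^p)^2 |\nabla \psi|^2 \dd x$, which vanishes as $\delta \to 0$ by dominated convergence since $b^{-1}|\nabla\psi|^2 \in L^1(\Omega)$; and a contribution $\int b^{-1}|\nabla \chi_\delta^p|^2 (\psi - c_{p,\delta})^2 \dd x$, which I will bound by $C\int_{A_\delta^p}b^{-1}|\nabla\psi|^2 \dd x$ using $b \sim |x - x_p|^{\tilde a_p}$ on $A_\delta^p$ together with the Poincar\'e--Wirtinger inequality at scale $\delta$ on the annulus. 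The latter also tends to zero by absolute continuity of the integral.

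For the second step, I will decompose $\psi_\delta = \sum_p c_{p,\delta} \chi_\delta^p + \eta_\delta$. The function $\eta_\delta := \psi_\delta - \sum_p c_{p,\delta}\chi_\delta^p$ lies in $H_0^1(\Omega) \cap X_b(\Omega)$ and vanishes on each $B(x_p, \delta)$, hence belongs to $X_b(\Omega')$ on the modified lake $\Omega' := \Omega \setminus \bigcup_p \overline{B(x_p, \delta)}$, which is a lake without degenerate islands in the sense of Definition~\ref{def-generallake} (the closed balls $\overline{B(x_p, \delta)}$ play the role of additional non-degenerate islands on which $b$ is bounded below). The density result for $X_b(\Omega')$ then provides a sequence $\varphi_n \in C_c^\infty(\Omega')$ with $\varphi_n \to \eta_\delta$ in $X_b$. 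Setting $\tilde\varphi_n := \varphi_n + \sum_p c_{p,\delta}\chi_\delta^p$ produces functions in $C_c^\infty(\Omega)$ with $\nabla \tilde\varphi_n \equiv 0$ on $\bigcup_p B(x_p, \delta)$ and $\|\tilde\varphi_n - \psi_\delta\|_{X_b} \to 0$. A diagonal extraction in $(\delta, n)$ then concludes the argument.

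The hard part will be the first step, which is genuinely new compared to \cite{LNP}: Lemma~\ref{lem:capacity} shows that the weighted capacity of a degenerate point is positive, which forbids approximation by functions \emph{vanishing} near $x_p$ and forces the weaker goal of approximation by functions that are merely locally \emph{constant}. The delicate balance lies in controlling the cutoff error $\delta^{-\tilde a_p - 2}\|\psi - c_{p,\delta}\|_{L^2(A_\delta^p)}^2$ produced by $\nabla \chi_\delta^p$: this is absorbed exactly by the weighted energy on the annulus through the combination of the scaling $b \sim \delta^{\tilde a_p}$ on $A_\delta^p$ and the Poincar\'e--Wirtinger inequality, a cancellation which would not survive at $\tilde a_p = 0$ but always occurs whenever $\tilde a_p > 0$, in accordance with our hypothesis. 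The adaptation of the density proof on lakes without degenerate islands to possibly vanishing boundary topography is routine given Lemma~\ref{HardyLem}.
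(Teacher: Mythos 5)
Your argument is correct, but the key step --- producing an $X_b$-approximant that is constant near each degenerate point $x_p$ --- is carried out by a genuinely different, and more elementary, device than the paper's. The paper's Lemma~\ref{lem_const} builds a curl-free vector field equal to $\nabla\psi$ away from $x_p$ and vanishing on $B(x_p,R_\varepsilon/2)$ by correcting the divergence defect $\nabla\chi\cdot\nabla^\perp\psi$ with a Bogovski{\u\i} field on the rescaled annulus, then integrates this field back to a stream function; this requires checking the compatibility condition $\int_A\nabla\chi\cdot(\nabla^\perp\psi)(R_\varepsilon\cdot+x_p)\,\dd x=0$ and that the corrected field has vanishing circulation. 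You instead subtract $\chi_\delta^p(\psi-c_{p,\delta})$ with $c_{p,\delta}$ the annular mean, and the exact cancellation $\delta^{-\tilde a_p-2}\cdot\delta^{2}\cdot\delta^{\tilde a_p}=1$ between the cutoff gradient, the Poincar\'e--Wirtinger constant at scale $\delta$, and the weight $b\sim|x-x_p|^{\tilde a_p}$ bounds the error by $C\int_{A_\delta^p}b^{-1}|\nabla\psi|^2\to 0$; this avoids Bogovski{\u\i} altogether and is quantitatively cleaner. (Incidentally, that cancellation also holds at $\tilde a_p=0$, contrary to your closing remark; what $\tilde a_p>0$ really changes is that $\Cap_{b^{-1}}(\{x_p\})>0$, which is what forces locally \emph{constant} rather than locally \emph{vanishing} approximants.) Your second step also reorganizes the paper's Step~3: instead of combining the Hardy cutoff near the vanishing outer boundary with mollification directly on $\Omega$, you excise the balls $\overline{B(x_p,\delta)}$ as artificial non-degenerate islands with $a_k=0$ and invoke the $N_d=0$ case of the proposition on $\Omega'$. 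This is a clean, non-circular reduction, at the price of leaving that $N_d=0$ case (the adaptation of \cite{LNP} via Lemma~\ref{HardyLem}, i.e.\ the paper's Steps~1 and~3) as a black box, and of the routine but not entirely free verification that $\eta_\delta$, having zero trace on the circles $\partial B(x_p,\delta)$, indeed belongs to $H^1_0(\Omega')$.
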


We divide this proof in three steps.

\noindent {\em Step 1: Hardy inequality close to boundaries with vanishing topography.}

In this first step, we derive an estimate in the neighborhood of the boundary where $a_{k}>0$
\begin{equation*}
\partial\Omega_R:=\Big\{x\in \bigcup_{k\in [0,N_{nd}]\ |\ a_{k}>0}\mathcal{O}_{k} :\ 0\le d(x)\le R\Big\},
\end{equation*}
where we prove an Hardy type inequality.

\begin{lemma}\label{HardyLem} There exists $R_{0},C>0$ such that the following inequality holds for every $f\in X_{b}(\Omega)$ and any positive $R\in (0,R_{0})$:
\begin{equation*}
\Vert b^{-1/2}(f/d)\Vert_{L^2(\partial\Omega_R)} \leq C \Vert b^{-1/2}\nabla f\Vert_{L^2(\partial\Omega_R)}.
\end{equation*}
\end{lemma}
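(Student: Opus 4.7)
The target inequality, using $b = c\,d^{a_k}$ with $c$ bounded above and away from zero in each relevant boundary neighbourhood, is equivalent to
\begin{equation*}
 \int_{\partial\Omega_R} \frac{|f|^2}{d^{2+a_k}}\,dx \leq C \int_{\partial\Omega_R} \frac{|\nabla f|^2}{d^{a_k}}\,dx,
\end{equation*}
where $a_k$ refers to the exponent of the boundary component under consideration. My plan is to reduce this two-dimensional weighted estimate to a one-dimensional weighted Hardy inequality along normal rays and then invoke the Muckenhoupt criterion.

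First I would use a finite partition of unity subordinate to the (finitely many) disjoint tubular neighbourhoods making up $\partial\Omega_R$, so that it suffices to work on a single one-sided tubular neighbourhood of a $C^1$ Jordan curve $\Gamma$ (the $C^1$ regularity is granted by hypothesis (5) precisely when $a_k>0$). For $R_0$ small enough the distance function $d$ is $C^1$ on the tube of width $R_0$, $|\nabla d|=1$, and the normal-arclength map $(s,t)\mapsto \gamma(s)+t\nu(s)$ is a bi-Lipschitz diffeomorphism with Jacobian bounded above and away from zero. In these coordinates $d(x)=t$ and $b(x)=c(s,t)\,t^{a_k}$ with $c$ two-sided bounded.

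Second, Fubini reduces the target estimate to the one-dimensional bound
\begin{equation*}
 \int_0^R \frac{|f(s,t)|^2}{t^{2+a_k}}\,dt \leq C \int_0^R \frac{|\partial_t f(s,t)|^2}{t^{a_k}}\,dt
\end{equation*}
for a.e. tangential parameter $s$. Since $f\in X_b(\Omega)\subset H^1_0(\Omega)$, Fubini combined with the vanishing trace on $\partial\Omega$ implies that for a.e. $s$ the function $t\mapsto f(\gamma(s)+t\nu(s))$ belongs to $H^1((0,R))$ and vanishes at $t=0$, so $f(s,t)=\int_0^t \partial_\tau f(s,\tau)\,d\tau$. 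The displayed estimate is then the Muckenhoupt-Hardy inequality on $(0,R)$ for the pair of weights $w(t)=t^{-2-a_k}$ and $v(t)=t^{-a_k}$; the Muckenhoupt sup evaluates to $\frac{1}{1+a_k}$, finite and uniform in $s$ since $a_k>0$, yielding the constant $\frac{4}{(1+a_k)^2}$. Reverting to the $x$-variable via the bounded Jacobian and summing the partition-of-unity pieces closes the estimate.

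The main obstacle is geometric: verifying the tubular coordinate setup under only $C^1$ regularity of $\partial\Omega$, in particular that the Jacobian of the normal-arclength parametrization is $L^\infty$-bounded above and below uniformly in $t\in (0,R_0)$ (so that Fubini and change of variable are quantitatively controlled). Once this is in place, the Hardy step is robust, since the assumption $a_k>0$ keeps us strictly away from the critical exponent $a_k=-1$ where the Muckenhoupt constant blows up, and the tangential integration is routine.
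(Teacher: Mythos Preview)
Your argument is correct, and it takes a genuinely different route from the paper's proof. You reduce the two-dimensional weighted inequality to a one-dimensional weighted Hardy inequality along each normal ray via Fubini, then invoke the Muckenhoupt criterion with explicit weights $w(t)=t^{-2-a_k}$, $v(t)=t^{-a_k}$. The paper instead proves a local Poincar\'e-type inequality on dyadic shells $\partial\Omega_{2^{1-n}R}\setminus\partial\Omega_{2^{-n}R}$ (also via the tubular coordinates and the fundamental theorem of calculus), replaces $d^{-2-a_k}$ by its value on each shell, and then sums the resulting geometric series $\sum_n (2^{-n}R)^{-a_k}$ over all shells containing a given point, which converges precisely because $a_k>0$.

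Both arguments rest on the same geometric input (the $C^1$ tubular neighbourhood, which you correctly flag as the only nontrivial prerequisite), and both exploit $a_k>0$ at the decisive step. Your approach is more direct and yields an explicit constant, while the paper's dyadic decomposition is slightly more robust in that it only needs the Poincar\'e step on each shell rather than a pointwise normal-ray formula; in the present setting this makes no practical difference. One minor remark: your closing comment that $a_k>0$ keeps you away from the critical exponent $a_k=-1$ is true but understated, since even $a_k=0$ would make the Muckenhoupt supremum finite; the restriction to $a_k>0$ here comes from the definition of $\partial\Omega_R$ (only vanishing-topography components are included), not from the Hardy step itself.
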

\begin{proof}
 Such a lemma was proved in \cite{LNP} for smooth lake. We reproduce the proof here in order to show that the weakened assumptions on the lake are sufficient. 
 
We start with the following claim: there exists $R_{0},C>0$ such that for any $f\in H^1_0(\Omega)$, any positive $R\in (0,R_{0})$ and any $k\in[0,N_{nd}]$ such that $a_{k}>0$, there holds that
\begin{equation}\label{Hardy2}
\int_{\mathcal{O}^k \cap( \partial\Omega_{2R}\setminus \partial\Omega_R)} \vert f(x)\vert^2\, \dd x\leq C R^2 \int_{\mathcal{O}^k \cap\partial\Omega_{2R}}\vert\nabla f(x)\vert^2\, \dd x . 
\end{equation}
Indeed, the $C^1$ regularity of $\partial \cal I_{k}$ for $a_{k}>0$ (resp. $\partial \widetilde{\Omega}$ if $a_{0}>0$) allows us to apply the tubular neighborhood theorem to change variable in terms of the distance. In particular, for $R_{0}$ small enough, $d$ is a $C^1$ function.
Therefore, the claim follows directly from the fundamental theorem of Calculus and the standard H\"older's inequality at least for smooth compactly supported functions. By density, it extends to $H^1_0(\Omega)$.

Next, by \eqref{Hardy2}, we can write
\begin{equation*}
\begin{split}
\Big \Vert b^{-1/2} (f/d) &\Big \Vert_{L^2(\partial\Omega_R)}^2 = \sum_{n\in\mathbb{N}^*}\int_{\partial\Omega_{2^{1-n}R}\setminus \partial\Omega_{2^{-n}R}}\left(\frac{f(x)}{d(x)}\right)^2\frac{d x}{b(x)}\\
& \leq \frac1\theta \sum_{n\in\mathbb{N}^*} \sum_{k| a_{k}>0}(R2^{-n})^{-(a_k+2)}\int_{\cal O_{k}\cap(\partial\Omega_{2^{1-n}R}\setminus \partial\Omega_{2^{-n}R})} \vert f(x)\vert^2\, \dd x\\
&\leq C \sum_{n\in\mathbb{N}^*}\sum_{k | a_{k}>0} (R2^{-n})^{-a_k}\int_{\cal O_{k}\cap\partial\Omega_{2^{1-n}R}} \vert \nabla f(x)\vert^2\, \dd x\\
&\leq C \sum_{k| a_{k}>0} \int_{\cal O_{k}\cap \partial\Omega_R}\left(\sum_{n\in\mathbb{N}^*:\,\, d(x)\le 2^{1-n}R} (R2^{-n})^{-a_k}\right)\vert \nabla f(x)\vert^2\, \dd x.\\
\end{split}
\end{equation*}
Since the summation in the parentheses in the last line above is bounded by $(\frac d2)^{-a_{k}} /(2^{a_{k}}-1)$ hence by $Cb^{-1}$, the integral on the righthand side is bounded by $\Vert b^{-1/2}\nabla f\Vert_{L^2(\partial\Omega_R)}^2$. The lemma is thus proved. 
\end{proof}

\medskip
\noindent {\em Step 2: approximation in $X_{b}$ by fonctions which are constants close to the degenerated islands}

We cannot expect the same kind of estimate in the neighborhood of a degenerated island $\cal O_{p}$ because the standard Hardy inequality is critical in $L^2$ in dimension two. So we approximate any function in $X_b(\Omega)$ by functions in $X_b(\Omega)$ which are constant in the neighborhood of $x_{p}$.

\begin{lemma}\label{lem_const}
 For any $\varepsilon>0$ and $\psi\in X_b(\Omega)$, there exists $\varphi \in X_b(\Omega)$ and $R>0$ such that $\nabla \varphi =0$ on $\cup_{p=1}^{N_{d}} B(x_{p},R)$ and $\|\psi-\varphi\|_{X_{b}}\leq \varepsilon$.
\end{lemma}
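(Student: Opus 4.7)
The plan is to perform a local surgery near each degenerate point $x_p$ by smoothly interpolating between $\psi$ and a suitably chosen constant. Since $N_{d}$ is finite and the points $x_{p}$ are disjoint, for $R$ small enough the neighborhoods $B(x_{p},2R)$ are pairwise disjoint and disjoint from $\partial\Omega$; the surgeries can then be carried out independently, so I focus on a single $x_p$.

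For fixed $R>0$ sufficiently small, pick $\chi_R \in C_c^{\infty}(B(x_p, 2R))$ with $\chi_R \equiv 1$ on $B(x_p, R)$ and $|\nabla \chi_R|\leq C/R$. Denote the annulus $A_R = B(x_p, 2R)\setminus \overline{B(x_p, R)}$ and set
\[
c_R := \fint_{A_R}\psi \, \dd x, \qquad \varphi_R := (1-\chi_R)\psi + c_R\chi_R .
\]
The constant $c_R$ is well-defined since $\psi\in H_0^1(\Omega)\subset L_{\loc}^1(\Omega)$. By construction $\varphi_R - \psi$ has compact support in $B(x_p, 2R)$, so $\varphi_R\in H_0^1(\Omega)$, and $\nabla \varphi_R\equiv 0$ on $B(x_p, R)$ as required. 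It then suffices to show $\|\psi-\varphi_R\|_{X_b} \to 0$ as $R\to 0$, and the lemma follows upon choosing $R$ small enough.

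Using $\psi - \varphi_R = \chi_R(\psi - c_R)$ and $\nabla(\psi - \varphi_R) = \chi_R\nabla\psi + (\psi-c_R)\nabla\chi_R$, one splits
\[
\|\psi - \varphi_R\|_{X_b}^2 \leq 2\int_{B(x_p, 2R)}\frac{\chi_R^2|\nabla\psi|^2}{b}\, \dd x + 2\int_{A_R}\frac{(\psi - c_R)^2|\nabla\chi_R|^2}{b}\, \dd x.
\]
The first term vanishes as $R\to 0$ by dominated convergence, since $|\nabla\psi|^2/b\in L^1(\Omega)$. For the second term, the assumption $b(x)=c(x)|x-x_p|^{\tilde a_p}$ with $\theta\leq c\leq \|c\|_{L^\infty}$ yields $c_{1}R^{\tilde a_p}\leq b(x) \leq c_{2}R^{\tilde a_p}$ on $A_R$, and the standard Poincar\'e inequality on the annulus (obtained by scaling to $A_1$) gives $\|\psi - c_R\|_{L^2(A_R)}\leq C R\|\nabla\psi\|_{L^2(A_R)}$. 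Combining these with $|\nabla \chi_R|\leq C/R$, one obtains
\[
\int_{A_R}\frac{(\psi-c_R)^2|\nabla\chi_R|^2}{b}\, \dd x \leq \frac{C}{R^{\tilde a_p}}\int_{A_R}|\nabla\psi|^2\, \dd x \leq C\int_{A_R}\frac{|\nabla\psi|^2}{b}\, \dd x,
\]
which tends to $0$ by dominated convergence as well.

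The main point is the control of the second term: the Poincar\'e inequality on $A_R$ provides a factor $R^2$ that exactly compensates both $|\nabla \chi_R|^2 \sim R^{-2}$ and the defect $b^{-1} \sim R^{-\tilde a_p}$ measured against the natural norm $\int_{A_R}|\nabla\psi|^2/b$. This scaling works uniformly in $R$ and requires no upper bound on $\tilde a_p$, so the construction handles arbitrary $\tilde a_p>0$; this is the interior analogue of the boundary mechanism in Lemma~\ref{HardyLem}.
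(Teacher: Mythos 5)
Your proof is correct, and it takes a genuinely different and more elementary route than the paper's. The paper works at the level of vector fields: it forms $\chi(\tfrac{\cdot-x_p}{R})\nabla\psi$, which is no longer a gradient, corrects the defect by solving a divergence problem on the reference annulus with the Bogovski\u\i{} operator, checks that the resulting field $\tilde F$ is curl-free with vanishing generalized circulations (a point the paper itself flags as delicate, since the trace of $\nabla\psi$ on $\partial\cal I_k$ is not classically defined), and only then integrates $\tilde F$ back to a function $\varphi$. You instead perform the surgery directly on the function $\psi$, interpolating to its annular mean $c_R$, and control the commutator term $(\psi-c_R)\nabla\chi_R$ by the scale-invariant Poincar\'e--Wirtinger inequality on the annulus. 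The key scaling is identical in both proofs: the factor $R^2$ (from Poincar\'e in your case, from the Bogovski\u\i{} estimate after rescaling in the paper's) cancels $|\nabla\chi_R|^2\sim R^{-2}$, and the two-sided bound $b\sim R^{\tilde a_p}$ on the annulus converts the unweighted Dirichlet energy back into the $X_b$ norm, so that everything is absorbed into $\int_{A_R}|\nabla\psi|^2/b$, which vanishes as $R\to0$ by absolute continuity of the integral. Your version buys simplicity --- no Bogovski\u\i{} operator, no circulation or integrability-of-the-field argument, and no issue with choosing the additive constant so that $\varphi=\psi$ outside the balls, since that holds by construction; like the paper's proof it requires no upper bound on $\tilde a_p$. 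The only implicit hypotheses you use (that $c$ in $b=c(x)|x-x_p|^{\tilde a_p}$ is bounded above as well as below on $\mathcal O_p$, and that $B(x_p,2R)\Subset\mathcal O_p\cap\Omega$ for $R$ small) are exactly those the paper's own estimate relies on.
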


\begin{proof}
Let $\varepsilon>0$ and $\psi\in X_b(\Omega)$ fixed, by the dominated convergence theorem, there exists $R_{\varepsilon}>0$ such that
\[
\Big\| \frac{\nabla\psi}{\sqrt b} \Big\|_{L^2(B(x_{p},R_{\varepsilon}))}\leq \varepsilon,
\]
for any $p$.

We introduce $\chi$ a smooth cutoff function such that $\chi(x)\equiv 1$ if $|x|\geq 1$ and $\chi(x)\equiv 0$ if $|x|\leq 1/2$. As the following computation is true for any $\psi \in C_{c}^\infty(\Omega)$
\begin{align*}
\int_{B(0,1)\setminus B(0,\frac12) }& \nabla\chi(x)\cdot (\nabla^\perp \psi)( R_{\varepsilon}x+x_{p})\, dx\\
&= \int_{B(0,1)\setminus B(0,\frac12) } \diver \Big[ \chi(x) (\nabla^\perp \psi)( R_{\varepsilon}x+x_{p}) \Big]\, dx\\
&= \int_{\partial B(0,1)} (\nabla^\perp \psi)( R_{\varepsilon}x+x_{p}) \cdot \mb n(x)\, d\sigma(x)\\
& = \int_{B(0,1) } \diver \Big[ (\nabla^\perp \psi)( R_{\varepsilon}x+x_{p})\Big]\, dx=0 ,
\end{align*}
we have by density that for any $\psi\in H^1_{0}(\Omega)$: 
\[
\int_{B(0,1)\setminus B(0,\frac12) } \nabla\chi(x)\cdot (\nabla^\perp \psi)( R_{\varepsilon}x+x_{p})\, dx = 0.
\]
It is known thanks to the Bogovski{\u\i} operator \cite{Bogovskii79,Bogovskii80} (see \cite[Theorem III.3.1]{Galdi}), that there exists $C>0$ depending only on the domain $A:=B(0,1)\setminus \overline{B(0,1/2)}$ such that the problem
\[
 \diver F_{p}(x) = \nabla\chi(x)\cdot (\nabla^\perp \psi)( R_{\varepsilon}x+x_{p}) , \quad F \in H^1_{0}(A)
\]
has a solution such that
\[
 \| F_{p} \|_{H^1(A)} \leq C \Big\| \nabla\chi(\cdot)\cdot (\nabla^\perp \psi)( R_{\varepsilon}\cdot +x_{p} ) \Big\|_{L^2 (A)}.
\]
Extending $F_{p}$ by zero in the exterior of $A$, we define
\[
\tilde F(x) :=\sum_{p}\chi\Big( \frac {x-x_{p}}{R_{\varepsilon}}\Big) \nabla\psi (x)+F^\perp_{p}\Big( \frac {x-x_{p}}{R_{\varepsilon}}\Big)
\]
where we verify that $\tilde F \equiv 0$ in $\cup_{p} B(x_{p},R_{\varepsilon}/2)$, 
\begin{align*}
\curl \tilde F (x)&= -\diver \tilde F^\perp\\
& = -\frac1{R_{\varepsilon}}\sum_{p}\Big[ (\nabla\chi)\Big( \frac {x-x_{p}}{R_{\varepsilon}}\Big) \cdot \nabla^\perp \psi( x) -(\diver F_{p})\Big( \frac {x-x_{p}}{R_{\varepsilon}}\Big) \Big]=0
\end{align*}
in $\cal{D}'(\Omega)$. Indeed, we have $\diver \chi \nabla^\perp \psi=\nabla \chi \cdot \nabla^\perp \psi$ when $\psi\in C^\infty_{c}(\Omega)$, hence by density we get the same equality in $\cal{D}'(\Omega)$ when $\psi\in H^1_{0}(\Omega)$.

Moreover,
\begin{align*}
\Big\|\frac{ \tilde F - \nabla \psi}{\sqrt b} \Big\|_{L^2(\Omega)} 
\leq& \Big\| \frac{ \nabla \psi}{\sqrt b} \Big\|_{L^2(\cup_{p}B(x_{p},R_{\varepsilon}))} \\
&+C\Big( \sum_{p} \frac{1}{R_{\varepsilon}^{a_{p}}}\Big\|F_{p}\Big( \frac {\cdot-x_{p}}{R_{\varepsilon}}\Big) \Big\|_{L^2(B(x_{p},R_{\varepsilon})\setminus B(x_{p},R_{\varepsilon}/2))}^2 \Big)^{1/2}\\
\leq& \varepsilon N_{d} + C\Big( \sum_{p} \frac{R_{\varepsilon}^2}{R_{\varepsilon}^{a_{p}}}\|F_{p}\|_{L^2(A)}^2 \Big)^{1/2}\\
 \leq &\varepsilon N_{d} + C\Big( \sum_{p} \frac{R_{\varepsilon}^2}{R_{\varepsilon}^{a_{p}}} \| \nabla\chi(\cdot)\cdot (\nabla^\perp \psi)( R_{\varepsilon}\cdot+x_{p}) \|_{L^2(A)}^2 \Big)^{1/2} \\
\leq &\varepsilon N_{d} + C\Big( \sum_{p} \frac{1}{R_{\varepsilon}^{a_{p}}} \| \nabla \psi \|_{L^2(B(x_{p},R_{\varepsilon})\setminus B(x_{p},R_{\varepsilon}/2))}^2 \Big)^{1/2}\\
 \leq &\varepsilon N_{d} +C \Big\| \frac{ \nabla \psi}{\sqrt b} \Big\|_{L^2(\cup_{p}B(x_{p},R_{\varepsilon}))}\leq C\varepsilon,
\end{align*}
where $C$ depends only on $\Omega$ and $b$.

As $\tilde F$ is curl free in $\Omega$ and $\oint_{\cal I_{k}} \tilde F\cdot\tau\, ds=\oint_{\cal I_{k}} \nabla \psi\cdot\tau\, ds =0$ for all $k\in [1,N_{nd}]$, there exists $\varphi\in L^2(\Omega)$ such that $\nabla \varphi=\tilde F$. Rigorously, the trace of $\nabla \psi$ may not be defined, nevertheless we can show that the generalized circulation of $\tilde F$ is well zero. Moreover, $\nabla (\varphi-\psi)=0$ in the connected set $\Omega\setminus \cup_{p}B(x_{p},R_{\varepsilon})$, so we can choose $\varphi=\psi$ in $\Omega\setminus \cup_{p}B(x_{p},R_{\varepsilon})$ which satisfies the same boundary condition, i.e
 $\varphi\in H^1_{0} (\Omega)$ verifies Lemma~\ref{lem_const}.
 \end{proof}

Thanks to the two previous lemmas, we can now adapt the proof of \cite{LNP}.

\medskip 

\noindent {\em Step 3: proof of Proposition~\ref{prop density}}
\begin{proof}
Fix $\varepsilon>0$ and $\psi \in X_b(\Omega)$. First, we use Lemma~\ref{lem_const} to introduce $\varphi\in X_b(\Omega)$ such that $\nabla \varphi \equiv 0$ on $\cup_{p}B(x_{p},R)$ for some $R>0$ and $\|\psi-\varphi\|_{X_{b}}\leq \varepsilon$.

Second, we construct a cut-off function $\chi\in C^1(\Omega)$ such that $\chi \equiv 1$ on $\cup_{p}B(x_{p},R) \cup_{a_{k}=0} \cal O_{k}$, $\chi \equiv 0$ in $\partial \Omega_{\tilde R}$ for some $\tilde R\in (0,R_{0})$ and
\begin{equation}\label{CutOff}
\Vert (1-\chi)\varphi\Vert_{X_{b}}\le\varepsilon.
\end{equation}
This $\chi$ is constructed in \cite{LNP} and is a consequence of Lemma~\ref{HardyLem}. We copy it here for convenience of the reader. Since $\varphi \in X_b(\Omega)$, there exists a positive $R_\epsilon$ such that
\begin{equation}\label{CutOff2}
\int_{\partial\Omega_{R_\epsilon}}\vert\nabla \varphi(x)\vert^2\frac{dx}{b(x)}\le\varepsilon^2.
\end{equation}
Let us introduce a cut-off function $\eta\in C^\infty(\mathbb{R}_+)$ such that $0\le\eta\le 1$, $\eta(z)\equiv1$ if $z\ge 1$ and $\eta(z)\equiv 0$ if $z\le 1/2$ and define
\begin{equation*}
\chi(x)=\eta(\dist(x, \partial \Omega_{0})/R_\epsilon),
\end{equation*}
where $\partial\Omega_{0}$ is the boundaries where the bottom vanishes $\cup_{k|a_{k}>0} \partial \cal I_{k}$ (with possibly $\partial\widetilde{\Omega}$).
Clearly, $\chi\in C^1(\Omega)$ thanks to the $C^1$ regularity of $\partial\Omega_{0}$ and verifies well the properties listed above \eqref{CutOff}. In addition, we note that $\nabla[(1-\chi)\varphi]=(1-\chi)\nabla \varphi-\varphi\nabla\chi$. It then follows by \eqref{CutOff2} that
\begin{equation*}
\int_{\Omega}(1-\chi(x))^2\vert\nabla \varphi(x)\vert^2\frac{dx}{b(x)}
\le \int_{\partial\Omega_{R_\epsilon}}\vert\nabla \varphi(x)\vert^2\frac{dx}{b(x)}
\le \varepsilon^2.
\end{equation*}
Meanwhile using the fact that
\begin{equation*}
\vert \varphi\nabla\chi\vert=\vert R_\epsilon^{-1} \varphi\eta^\prime(\dist(x, \partial \Omega_{0})/R_\epsilon) \nabla d(x) \vert\le |(\varphi/d)(x) \mathds{1}_{\partial \Omega_{R_{\varepsilon}}} | \Vert\eta^\prime\Vert_{L^\infty}
\end{equation*}
and Lemma~\ref{HardyLem}, we obtain
\begin{equation*}
\begin{split}
\int_{\Omega}\vert \varphi(x)\nabla\chi(x)\vert^2\frac{dx}{b(x)}&\le \Vert\eta^\prime\Vert_{L^\infty}\int_{\partial\Omega_{R_\epsilon}}\frac{\vert \varphi(x)\vert^2}{d(x)^2}\frac{dx}{b(x)}\leq C \int_{\partial\Omega_{R_\epsilon}} \vert\nabla \varphi(x)\vert^2\frac{dx}{b(x)}\\
&\leq C\varepsilon^2,
\end{split}
\end{equation*}
which ends the proof of \eqref{CutOff}.

Next, we split $\chi \varphi=\chi \tilde \chi \varphi + \chi (1-\tilde \chi)\varphi$ where $\tilde \chi\in C^1_{c}(\Omega)$ such that $\tilde \chi \equiv 1$ on $\cup_{p}B(x_{p},R)$. 

Second, we notice that $\chi (1-\tilde \chi)\varphi$ belongs to $H^1_{0}(\Omega)$ and $b(x)\geq \theta>0$ on his support. By definition of $H^1_{0}$ there exists $f\in C^\infty_{c}(\Omega)$, such that $\| f -\chi (1-\tilde \chi)\varphi \|_{X_{b}} \leq \theta^{-1/2} \| f -\chi (1-\tilde \chi)\varphi \|_{H^1}\leq \varepsilon$. Due to the support of $\chi (1-\tilde \chi)\varphi$ we can assume that $f\equiv 0$ on $\cup_{p}B(x_{p},R/2)$.

Third, we simply approximate the compactly supported function $\chi \tilde \chi \varphi$ with its $C^\infty_c$ mollifier functions. Indeed, if the mollifier parameter is small enough, $\chi \tilde \chi \varphi * \xi_{\delta}\equiv \varphi$ in $B(x_{p},R/2)$ so the support of $\nabla \Big(\chi \tilde \chi \varphi * \xi_{\delta}\Big)$ is at a distance $\eta>0$ from $\partial\Omega \cup_{p} \{ x_{p} \}$, so the convergence in $X_b(\Omega)$ norm comes from the convergence in $\dot H^1$ norm and Assumption (3) in the beginning of this appendix.

These three arguments give a $ C^\infty_c$ function $f+ \chi \tilde \chi \varphi * \xi_{\delta}$ which is at distance $C\varepsilon$ of $\psi$ in the $X_{b}$ norm and which is constant on $\cup_{p}B(x_{p},R/2)$. This completes the proof of Proposition~\ref{prop density}.
\end{proof}
The density property allows one to readily infer existence and uniqueness of solutions to the degenerated elliptic problems on $(\Omega,b)$ by reproducing the arguments of \cite[Section 2]{LNP}.

\begin{corollary}\label{coro:BSlimit}
Let $(\Omega,b)$ be a lake with one degenerated island as in Definition~\ref{ass:b}. Then, the Hodge decomposition provided by Proposition~\ref{prop:BS} holds true on $(\Omega,b)$.
\end{corollary}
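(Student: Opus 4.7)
The strategy is to reproduce the four-step construction in Section~2 of \cite{LNP}, using the density statement of Proposition~\ref{prop density} and the positive weighted capacity of Lemma~\ref{lem:capacity} to replace, respectively, the smooth-cutoff and topological arguments that in \cite{LNP} relied on a non-degenerated island $\partial\cal I$.

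First I would set up $X_b(\Omega)$ as a Hilbert space with inner product $\langle\psi,\varphi\rangle_{X_b}=\int_\Omega b^{-1}\nabla\psi\cdot\nabla\varphi\,\dd x$; combining the trivial bound $\|\psi\|_{H^1_0(\Omega)}\leq \|b\|_{L^\infty}^{1/2}\|\psi\|_{X_b}$ with the Poincaré inequality on $\Omega$ shows that $\|\cdot\|_{X_b}$ is an equivalent norm on $X_b(\Omega)$. For $f\in L^2(\Omega)$ (which is the relevant case since $f=b\omega$ with $\omega\in L^\infty$ and $b$ bounded), the linear form $\varphi\mapsto -\int_\Omega f\varphi\,\dd x$ is continuous on $X_b(\Omega)$ by the preceding embedding, so Riesz's theorem produces a unique $\psi^0\in X_b(\Omega)$ with $\langle\psi^0,\varphi\rangle_{X_b}=-\int_\Omega f\varphi\,\dd x$ for every $\varphi\in X_b(\Omega)$. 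Proposition~\ref{prop density} provides a sequence $\varphi_n\in C_c^\infty(\Omega)$ with $\nabla\varphi_n=0$ near $0$ converging to any test function in $X_b(\Omega)$; plugging $\varphi_n\in C_c^\infty(\Omega\setminus\{0\})$ into the variational identity yields $\diver(b^{-1}\nabla\psi^0)=f$ in $\mathcal{D}'(\Omega\setminus\{0\})$, which is the correct formulation of \eqref{eq:elliptic0} in the punctured lake.

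Next I would construct the analogue of $\phi^1$ and $\psi^1$. Fix $\chi_\delta$ as in \eqref{eq:chidelta} with $\chi_\delta\equiv 1$ on $B(0,\delta)\Subset\Omega$; the right-hand side $-\diver(b^{-1}\nabla\chi_\delta)$ is well defined as an element of the dual of $X_b(\Omega)$ because $\supp(\nabla\chi_\delta)$ lies in a compact of $\Omega\setminus\{0\}$ on which $b\geq\theta_K>0$. The previous step produces a unique $\tilde\phi^1\in X_b(\Omega)$ solving $\diver(b^{-1}\nabla\tilde\phi^1)=-\diver(b^{-1}\nabla\chi_\delta)$, and I would set $\phi^1:=\tilde\phi^1+\chi_\delta$. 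To show $\mathcal{H}_b(\Omega):=\{\Phi\in X_b(\Omega)+\mathbb R\chi_\delta:\diver(b^{-1}\nabla\Phi)=0\text{ in }\mathcal{D}'(\Omega\setminus\{0\})\}=\mathbb{R}\phi^1$, one combines uniqueness from step one with the nontriviality of $\phi^1$: testing the equation against $\tilde\phi^1$ via the density result gives $\int_\Omega b^{-1}|\nabla\phi^1|^2\,\dd x=-\int_\Omega b^{-1}\nabla\chi_\delta\cdot\nabla\tilde\phi^1\,\dd x=\int_\Omega b^{-1}|\nabla\phi^1|^2\,\dd x$ after rearrangement, and most importantly, approximating $\phi^1$ in $X_b(\Omega)$ by smooth functions equal to $1$ near $0$ (via Proposition~\ref{prop density} applied to $\tilde\phi^1$) shows $\int_\Omega b^{-1}|\nabla\phi^1|^2\,\dd x\geq \Cap_{b^{-1}}(\{0\},\Omega)>0$ thanks to Lemma~\ref{lem:capacity}. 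Hence $\phi^1\neq 0$ and $\psi^1:=a\phi^1$ with $a:=-\left(\int_\Omega b^{-1}|\nabla\phi^1|^2\dd x\right)^{-1}$ satisfies $\gamma(b^{-1}\nabla^\perp\psi^1)=1$, exactly mimicking \eqref{eq:circulation-aeps}.

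Finally I would assemble the decomposition: given $\gamma\in\R$ and $f=b\omega\in L^2(\Omega)$, set $\alpha:=\gamma+\int_\Omega f\phi^1\,\dd x$ and $v:=b^{-1}\nabla^\perp\psi^0+\alpha b^{-1}\nabla^\perp\psi^1$. The Cauchy-Schwarz bound $\|\sqrt b v\|_{L^2}\leq\|\psi^0\|_{X_b}+|\alpha|\|\psi^1\|_{X_b}$ controls the energy, $\curl v=f$ in $\mathcal{D}'(\Omega\setminus\{0\})$ by construction, the weak divergence-free and tangency conditions \eqref{imperm} follow from $bv=\nabla^\perp(\psi^0+\alpha\psi^1)$ with $\psi^0+\alpha\psi^1\in H^1_0(\widetilde\Omega)$ (using again that the two stream functions differ from compactly supported $H^1_0$ representatives by a multiple of $\chi_\delta$), and the normalization of $\psi^1$ together with the definition of $\alpha$ gives $\gamma(v)=\gamma$ by a direct computation on \eqref{eq:gencirc}. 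Uniqueness of $v$ reduces to showing that a circulation-free, curl-free and divergence-free field with $\sqrt b v\in L^2$ vanishes, which follows from steps one and two applied to its stream function.

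\emph{Main obstacle.} The only nonroutine step is the normalization of $\psi^1$: one must rule out $\phi^1\equiv 0$ despite the degeneracy $b(0)=0$ and the fact that $\{0\}$ has zero unweighted $H^1$-capacity. This is precisely what the positive weighted capacity $\Cap_{b^{-1}}(\{0\},\Omega)>0$ provided by Lemma~\ref{lem:capacity} furnishes, once combined with the density of cut-off functions equal to $1$ near $0$ from Proposition~\ref{prop density}; every other ingredient transports verbatim from \cite{LNP}.
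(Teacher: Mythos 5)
Your overall architecture (Riesz for the Dirichlet part, a harmonic part built from $\chid$, normalisation via the positive weighted capacity, then assembly and uniqueness) matches what the paper intends: its proof is literally the one sentence preceding the corollary, referring to a reproduction of \cite[Section 2]{LNP} enabled by Proposition~\ref{prop density}. Your Dirichlet step, assembly step, and uniqueness argument are fine. There is, however, a genuine gap in your construction of $\phi^1$ and $\psi^1$, and it sits exactly at the point the paper flags at the end of Appendix~\ref{app:limit}.

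You pose the problem for $\tilde\phi^1$ in $X_b(\Omega)$, i.e.\ you take the unique $\tilde\phi^1\in X_b(\Omega)$ with $\langle\tilde\phi^1,\varphi\rangle_{X_b}=-\langle\chid,\varphi\rangle_{X_b}$ for all $\varphi\in X_b(\Omega)$. But $\chid$ itself belongs to $X_b(\Omega)$ (its gradient is supported where $b$ is bounded below), so this Riesz problem has the explicit solution $\tilde\phi^1=-\chid$, whence $\phi^1=\tilde\phi^1+\chid\equiv 0$: the harmonic part degenerates, $\int_\Omega b^{-1}|\nabla\phi^1|^2\,\dd x=0$, and your normalisation $a=-\bigl(\int_\Omega b^{-1}|\nabla\phi^1|^2\,\dd x\bigr)^{-1}$ is undefined. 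Your capacity argument does not rescue this: Proposition~\ref{prop density} approximates $\tilde\phi^1$ by functions that are \emph{constant} near $0$, not zero near $0$; for $\tilde\phi^1=-\chid$ that constant is $-1$, so $\varphi_n+\chid$ is close to $0$ near the origin and is not an admissible competitor for $\Cap_{b^{-1}}(\{0\},\Omega)$. (Your displayed chain $\int b^{-1}|\nabla\phi^1|^2=-\int b^{-1}\nabla\chid\cdot\nabla\tilde\phi^1=\int b^{-1}|\nabla\phi^1|^2$ is also a tautology as written and proves nothing.) The repair is to solve the $\tilde\phi^1$-problem in the \emph{strictly smaller} space $\widetilde{X_b}(\Oo)$, the closure of $C_c^\infty(\Oo)$ for the $X_b$-norm --- strictly smaller than $X_b(\Omega)$ precisely because $\Cap_{b^{-1}}(\{0\})>0$, as the paper remarks after the corollary. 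There the approximants of $\tilde\phi^1$ vanish near $0$, so $\varphi_n+\chid=1$ near $0$, Lemma~\ref{lem:capacity} gives the lower bound, and the circulation of $b^{-1}\nabla^{\perp}\phi^1$ equals $-\int b^{-1}|\nabla\phi^1|^2<0$, which allows the normalisation of $\psi^1$. Alternatively one can stay in $X_b(\Omega)$ but must then take $\delta_0$ itself as the datum: positive capacity makes $\varphi\mapsto\varphi(0)$ a bounded functional on $X_b(\Omega)$, and Riesz directly yields $\psi^1$ with $\diver(b^{-1}\nabla\psi^1)=\delta_0$ and circulation $\chid(0)=1$. Either fix is short, but as written your step two produces $\psi^1\equiv 0$ and the decomposition collapses to the circulation-free case.
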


Alternatively, we notice that we could have performed our analysis of Section~\ref{sec:evanescent} in a smaller space:
\[
\widetilde{X_{b}}(\Oo) = \text{ the closure for the $X_{b}$ norm of } C_c^\infty(\Omega\setminus\{0\}), 
\]
where the density is then encompassed in the definition. In general, the equivalence between $\widetilde{X_{b}}(\Oo)$ and $X_{b}(\Omega)$ does not hold and is a delicate question, see e.g. \cite{Z98}. On a related note, we remark that $\widetilde{X_b}(\Oo)\neq \widetilde{X_b}(\Omega)$ while $H_0^1(\Oo)=H_0^1(\Omega)$ which is related to the positive weighted $b^{-1}$-capacity of $\{0\}$ and the removability of the singularity in $\{0\}$. In particular, implementing the compactness argument in $\widetilde{X_b}(\Oo)$ would allow one to state that the stream-functions satisfy $\tilde\phi^1, \psi^0\in \widetilde{X_b}(\Oo)$, to be compared with Lemma~\ref{lem:convergence-phie} and \ref{lem:Dirichlet} respectively. However, we do not require this additional information for our asymptotic analysis. In particular, we can notice that in Section~\ref{sec:emergent}, $\psi^0\in \widetilde{X_b}(\Omega)$ and clearly not in $\widetilde{X_b}(\Oo)$.

Furthermore, as $\Cap_{b^{-1}}(\{0\})>0$, for any $\varphi\in X_b(\Omega)$ one may choose the quasi-everywhere continuous representative, i.e. $\varphi$ is continuous on sets of positive capacity and in particular in $0$.
We refer to the review paper \cite{K} and the monograph \cite{HKM}.

We finish this appendix by noticing that we do not need to assume $\tilde a_{p}<2$ to prove Proposition~\ref{prop density}. Of course, the obvious corollary of this proposition is that $C^\infty_{c}(\Omega)$ is dense in $X_{b}$, because $C^\infty_{c}(\Omega)$ is a larger set than the set appearing in this proposition. Nevertheless, we should be aware that $C^\infty_{c}(\Omega)$ is included in $X_{b}$ if and only if $\tilde a_{p}<2$ for every $p$. To show this equivalence, it suffices to consider test functions whose the gradient is constant in the neighborhood of $\{0\}$.


\end{document}